%% file: paper-surjectivity.tex
\documentclass[a4paper]{amsart}

\input{preamble}

\title{On the surjectivity conjectures of Dupont and Monod}

\author{Alexander Kupers}
\address{Department of Computer and Mathematical Sciences, University of Toronto Scarborough, 1265 Military Trail, Toronto, ON M1C 1A4, Canada}
\email{a.kupers@utoronto.ca}

\author{Daniil Rudenko}
\address{Department of Mathematics,
University of Chicago,
5734 S. University Avenue, 
Chicago, IL 60637, USA}
\email{rudenkodaniil@uchicago.edu}

\author{Ismael Sierra}
\address{School of Mathematics and Statistics, University of Glasgow, University Place, Glasgow, G12 8QQ, United Kingdom.}
\email{ismael.sierradelrio@glasgow.ac.uk}

\date{\today}

\begin{document}
	
\begin{abstract}In this paper we apply our work on polylogarithmic cocycles representing the Borel classes to establish that the comparison map from bounded continuous cohomology to continuous cohomology is surjective for connected semisimple Lie groups with finite centre. This proves conjectures of Dupont and Monod.
\end{abstract}
	
\maketitle

\vspace{-.75cm} 

\tableofcontents

\vspace{-.75cm} 

\section{Introduction}

The cohomology $H^*(G;\bb{R})$ of a group $G$ with real coefficients can be computed as the cohomology of the inhomogeneous bar complex $C^*(G;\bb{R}) \coloneq \rm{Fun}(G^p,\bb{R})$ of $\bb{R}$-valued functions with differential
\[(df)([g_0|\compactcdots|g_{p}]) = f([g_1|\compactcdots|g_p])+ \sum_{i=1}^{p}(-1)^{i} f([g_0|\compactcdots|g_{i-1}g_i|\cdots|g_p])+(-1)^{p+1} f([g_0|\compactcdots|g_{p-1}]).\] 
If $G$ is a topological group, we can define variants of group cohomology by imposing additional conditions on these functions: taking $C^*_\rm{cts}(G;\bb{R}) \subseteq  C^*(G;\bb{R})$ to be the subcomplex of continuous functions we obtain \emph{continuous cohomology}, and taking $C^*_\rm{bd}(G;\bb{R}) \subseteq C^*(G;\bb{R})$ to be subcomplex of bounded functions we obtain \emph{bounded cohomology}. We can also intersect these to get \emph{bounded continuous cohomology}, fitting in a commutative square of natural comparison maps induced by the inclusions:
\[\begin{tikzcd} H^*_\rm{bd,cts}(G;\bb{R}) \rar \dar[swap]{\circled{1}} & H^*_\rm{bd}(G;\bb{R}) \dar{\circled{2}} \\[-5pt]
H^*_\rm{cts}(G;\bb{R}) \rar{\circled{3}} & H^*(G;\bb{R}).\end{tikzcd}\]
Inspired by a question of Dupont (see Remark \eqref{enum:dupont}), Monod asked  in \cite[Problem A']{MonodICM} whether for a semisimple Lie group $G$ with finite centre, the map $\circled{1}$ is an isomorphism: we prove it is a surjection.

\begin{atheorem} \label{thm:monod}
    Let $G$ be a connected semisimple Lie group $G$ with finite centre. Then the following comparison map is surjective
    \[\circled{1} \colon H^*_\rm{bd,cts}(G;\R) \lra H^*_\rm{cts}(G;\R).\]
\end{atheorem}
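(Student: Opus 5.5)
We reduce the statement to producing bounded continuous representatives for a set of algebra generators, and then check that these representatives behave well under products. Throughout, \circled{1} is multiplicative, since the cup product of bounded continuous cochains is again bounded and continuous. Hence its image is a subring, and it suffices to show that a set of generators of the ring $H^*_\rm{cts}(G;\R)$ lies in the image.

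\textbf{Step 1: reduction to generators.} By van Est, $H^*_\rm{cts}(G;\R)\cong H^*(\mathfrak{g},K;\R)$. This is isomorphic to the cohomology of the compact dual symmetric space $X_u=G_u/K$, and it is a quotient of $H^*(BK;\R)$ tensored with the exterior part coming from $G_u$. We pass to the complexification and use the fibration/restriction maps $G\to G_\C$, with $G_\C$ viewed as a real group, together with maximal compact subgroups. In this way the ring is generated by pullbacks of two kinds of classes: the Borel (primitive) classes of $\rm{SL}_n(\C)$, and the Euler- and Pontryagin-type characteristic classes coming from $BK$.

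\textbf{Step 2: the Borel classes.} These are restricted along the representations $G\to \rm{SL}_n(\C)$ provided by complexifying a faithful representation; finite centre allows us to arrange this. The Borel classes in $H^{2m-1}_\rm{cts}(\rm{SL}_n(\C);\R)$ are represented by the polylogarithmic cocycles from the authors' earlier work. These are Grassmannian polylogarithm cocycles, built from measurable $\rm{SL}_n(\C)$-invariant functions of configurations of points in $\bb{P}^{n-1}(\C)$ or in flag varieties. The key input is that these functions are bounded, because the single-valued polylogarithms $\mathcal{L}_m$ are bounded. Boundedness gives classes in bounded measurable cohomology, which for locally compact groups agrees with bounded continuous cohomology (Monod). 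Restriction along $G\to\rm{SL}_n(\C)$ preserves boundedness.

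\textbf{Step 3: the remaining characteristic classes, and products.} Pontryagin-type classes arise as Borel classes of suitable complexifications: for real or quaternionic type, they are the pullbacks of the odd Borel classes under the Cartan-type identification. Euler-type classes are bounded by the Gromov/Bucher–Monod results on the Euler class, realised via orientation cocycles. Since \circled{1} is a ring map, every monomial in these generators is in the image, which gives surjectivity.

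The main obstacle is Step 1 combined with Step 2. One has to verify that the relevant ring generators of $H^*_\rm{cts}(G;\R)$ are genuinely pulled back from Borel classes of $\rm{SL}_n(\C)$ together with Euler classes. For the exceptional factors, and for factors of real rank one versus higher rank, this requires a case analysis via the Cartan classification and the Hirzebruch proportionality between $H^*_\rm{cts}(G)$ and $H^*(X_u)$. I would first handle simple factors separately using the Künneth formula, which is compatible with boundedness via cross products. I would then treat each type by identifying the generators of $H^*(X_u)$ as restricted Chern classes.
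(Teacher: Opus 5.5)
\textbf{Verdict: genuine gap.} Your overall architecture matches the paper: the image of \circled{1} is a subring, you reduce to simple factors by K\"unneth, and then find bounded representatives for ring generators. But the two load-bearing steps are asserted rather than proved, and parts of them are wrong as stated.

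\textbf{Gap in Step~1 (and Step~3).} The claim that $H^*_{\mathrm{cts}}(G;\R)$ is generated by restrictions of Borel classes from some $\SL_N(\C)$ together with classes from $\mathrm{B}K$ is exactly what you defer as ``the main obstacle,'' and it is where the content lies.
\begin{enumerate}
\item For complex simple $G$ you need $H^*_{\mathrm{cts}}(\SL_N(\C);\R)\to H^*_{\mathrm{cts}}(G;\R)\cong H^*_{\mathrm{sing}}(G;\R)$ to be surjective. Via Cartan's transgression theorem this reduces to surjectivity of $S^*(\fr{gl}_N^\vee)^{\fr{gl}_N}\to S^*(\fr{g}^\vee)^{\fr{g}}$. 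The paper gets this from the theorem that the traces $\mathrm{Tr}(F^m)$ attached to the smallest representation generate $Z(U(\fr{g}))$. This fails in type D: the Pfaffian is not a polynomial in traces of the vector representation, so the spinor representation is also needed. A single complexified faithful representation therefore does not suffice.
\item For noncompact real $G$, Hartnick--Ott settle the case $\mathrm{rk}(G_{\mathrm{u}})=\mathrm{rk}(K)$. In cases AI, AII, BDI ($p,q$ odd), EI and EIV, however, the odd-degree exterior generators of $H^*(G_{\mathrm{u}}/K_{\mathrm{u}})$ do not come from $\mathrm{B}K$, and one must show they are restricted from $G_\C$. The paper does this by identifying restriction along $G\to G_\C$ with pullback along the Cartan embedding $G_{\mathrm{u}}/K_{\mathrm{u}}\to G_{\mathrm{u}}$. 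Its composite with the quotient map acts on primitives by $x\mapsto x-\Theta_{\mathrm{u}}^*x$, and the paper then checks case by case that $\Theta_{\mathrm{u}}$ acts by $-1$ on the relevant primitives.
\item As written, your Step~3 mixes these up. Pontryagin classes live in degrees $4i$, so they cannot be pullbacks of odd-degree Borel classes. They, like Euler classes, lie in the image of the geometric map and are bounded by Gromov/Bucher; this is how the paper treats $p_i$, $e_{2m}$ and $u_8$. It is the odd generators that must come from the complexification, and showing that they do is the missing computation.
\end{enumerate}

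\textbf{Gap in Step~2.} This step covers only one Borel class per rank.
\begin{enumerate}
\item The polylogarithmic cocycles from the earlier work represent only the top class $\mathrm{Bo}_{n,2n-1}$ on $\GL_n(\C)$. Pulling back along $G\to\SL_N(\C)$ requires bounded representatives of all $\mathrm{Bo}_{N,2i-1}$ for $2\le i\le N$. For $i<N$ there is no nontrivial homomorphism $\SL_N(\C)\to\SL_i(\C)$ along which to pull them back.
\item The paper obtains these classes from Goncharov's extension criterion. The cocycle is bi-Grassmannian, meaning it also satisfies the dual cocycle equation for $d^\vee$. Hence it extends to $\GL_{m+n}(\C)$, with an explicit formula on generic tuples as a signed finite sum of values of the same function on projected configurations. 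That explicit form is what makes the extended cocycle essentially bounded.
\item Boundedness comes from writing the Grassmannian cluster polylogarithm as a finite sum of Hodge correlators, i.e.\ multiple and not merely classical polylogarithms, which are bounded and continuous.
\item These formulas hold only off a finite union of codimension-one submanifolds. So you get essentially bounded measurable cocycles, and you need Monod's measurable model together with mollification to pass to bounded continuous ones.
\end{enumerate}
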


\begin{remark*}\,
\begin{enumerate}[(i)]
    \item \cref{thm:monod} is false without the hypothesis that $G$ has finite centre: take $G = \widetilde{\SL}_2(\bb{R})$ \cite[Example 9.3.11]{MonodBook}.
    \item \label{enum:dupont} \cref{thm:monod} implies the image of $\circled{3}$ is contained in the image of $\circled{2}$, a conjecture implicit in a question of Dupont \cite[Remark 3]{DupontBounds} about integrals over geodesic simplices. 
    \item Previous work on \cref{thm:monod} includes \cite{DupontBounds,MonodBook,BMlattices,BMlatticeserrata,MonodSLn,CMPSC,HartnickOttPDE,BBI,Mengual,KastenholzSroka}. Moreover, Hartnick and Ott proved it for hermitian Lie groups \cite{HartnickOtt} , Goncharov proved \cref{thm:monod} for $\SL_3(\bb{C})$ \cite[Section 9]{Gon95b}, and for $\SL_4(\bb{C})$ it can be deduced from work of Goncharov and Rudenko \cite{GR18}.
    \item The comparison map of \cite[Problem A']{MonodICM} was proven to be an isomorphism in the nonarchimedean setting in \cite{MonodFlamtes}; in this setting surjectivity is trivial.
\end{enumerate}
\end{remark*}

Let us outline the strategy of the proof. The key result is the case $\SL_n(\C)$, whose continuous cohomology is generated by the Borel classes in degrees $\ge 3$. We will first prove it in this case, establishing \cite[Conjecture 9.3.8]{MonodBook}. Since the property of admitting a bounded continuous representative is closed under cup products, it suffices prove that the Borel classes have this property. We do so by combining \cite[Theorem D, E]{KRS2} with an extension criterion due to Goncharov (\cref{thm:extension-criterion}). In the former, we gave cocycle for the Borel class $\rm{Bo}_{n,2n-1} \in H^{2n-1}(\GL_n(\bb{C});\bb{R})$ that, away from a subset contained in a finite union of submanifolds codimension one in $\GL_n(\C)^{2n-1}$, is given by finite linear combination of Hodge correlators \cite{Gon19,Mal20,KT24} evaluated at rational functions of the entries of the matrices. Since Hodge correlators are bounded continuous functions for $n \geq 2$, this is an essentially bounded measurable cocycle and provides by mollification a bounded continuous cocycle. The Borel class is stable and Goncharov's criterion produces from our previous cocycle a cocycle that represents the analogous class $\rm{Bo}_{m+n,2n-1} \in H^{2n-1}(\GL_{m+n}(\bb{C});\bb{R})$ for $m > 0$, which is essentially bounded measurable by a similar argument.

The deduction of \cref{thm:monod} from the special case $\SL_n(\bb{C})$ then amounts to mere ``classical'' results in the theory of semisimple Lie groups and their cohomology. We first reduce to the case that $G$ is simple. There are three classes of simple Lie groups:
\begin{enumerate}[(A)]
\item compact simple Lie groups, 
\item complex simple Lie groups, and 
\item noncompact real simple Lie groups.
\end{enumerate} 
Those in class (A) have trivial continuous cohomology, so there is nothing to prove. We first deal with case (B): from classical computations we deduce that for noncompact complex simple Lie groups $G$ there exist nontrivial complex representations $G \to \SL(V)$ so that the induced maps on continuous cohomology are jointly surjective. We next deal with case (C): by combining work of Hartnick and Ott with an understanding of the map induced on continuous cohomology by the inclusion of a noncompact real simple Lie group into its complexification, we reduce this case to case (B). This completes the proof of \cref{thm:monod}.

\subsection{Applications} \cref{thm:monod} has several applications that we want to quickly point out:
\begin{enumerate}[(a)]
    \item \cref{thm:monod} yields an alternative proof of a conjecture due to Gromov which says that locally symmetric spaces of non-compact type have finite simplicial volume (see \cite[p.~61]{Karlsson}). This was previously proven by Lafont and Schmidt \cite{LafontSchmidt} .
    \item \cref{thm:monod} and \cite[Theorem 2]{BurgerIozzi} combine to yield the following: given $\Gamma \subseteq G$ a torsion-free and discrete subgroup of a semisimple Lie group with finite centre and associated locally symmetric space $\Gamma \backslash G/K$, any class in the image of $H^*_\rm{cts}(G;\bb{R}) \to H^*(G;\bb{R})$ pulls back along the map $\Gamma \backslash  G/K \to \rm{B}\Gamma \to \rm{B}G$ to a class that is represented in $H^*_\rm{dR}(\Gamma \backslash  G/K)$ by a closed bounded form. Thus, for example, the Borel classes for a torsion-free subgroup $\Gamma \subseteq \SL_n(\bb{Z})$ admit representatives on $\Gamma \backslash \SL_n(\bb{R})/\rm{SO}_n$ that are closed bounded forms.
    \item \cref{thm:monod} can be used to construct explicit cocycles for other cohomology classes or characteristic classes. Examples include characteristic classes of flat vector bundles or flat manifold bundles, or continuous cohomology classes for diffeomorphism groups \cite{Costes}.
\end{enumerate}

\subsection*{Acknowledgments} AK acknowledges the support of the Natural Sciences and Engineering Research Council of Canada (NSERC) [funding reference number 512156 and 512250]. DR was supported by NSF grant DMS-2502729. IS acknowledges the University of Toronto. We would like to thank Nicolas Monod and Robin Sroka for helpful discussions.

\section{Continuous cohomology of Lie groups and the van Est isomorphism} In this section we discuss definitions and results on the continuous cohomology of Lie groups. We will also need a substantial amount of classical Lie theory, for which the texts \cite{helgason2001differential,VGO} serve as our main references.

\begin{notation} We will often see the cohomology with real coefficients of a Lie group $G$ \emph{as a space}. To avoid confusion, we will write $H^*_\rm{sing}(G;\bb{R})$.
\end{notation}

\subsection{Continuous, smooth, and measurable cohomology}\label{sec:continuous-cohomology-etc} As in the introduction, for a group $G$ we consider the cochain complex $C^*(G;\bb{R}) \coloneq \rm{Fun}(G^p,\bb{R})$  with differential
\[(df)([g_0|\compactldots|g_{p}]) = f([g_1|\compactldots|g_p])+ \sum_{i=0}^{p-1} (-1)^{i} f([g_0|\compactldots|g_{i-1}g_i|\ldots|g_p])+(-1)^{p+1} f([g_0|\compactldots|g_{p-1}]).\] 
For a Lie group $G$ there are then variants $C^*_\rm{sm}(G;\bb{R}) \subseteq C^*_\rm{cts}(G;\bb{R}) \subseteq  C^*(G;\bb{R})$ of smooth, respectively continuous functions. There is also a measurable variant $C^*_\rm{m}(G;\bb{R})$, see \cite{AustinMoore}. As continuous functions are measurable, there is a map $C^*_\rm{cts}(G;\bb{R}) \to C^*_\rm{m}(G;\bb{R})$. The induced maps on cohomology are all isomorphisms
\[H^*_\rm{sm}(G;\bb{R}) \overset{\cong}\lra H^*_\rm{cts}(G;\bb{R}) \overset{\cong}\lra H^*_\rm{m}(G;\bb{R}).\]
Because of this, we will generally not distinguish between these and write $H^*_\rm{cts}(G;\bb{R})$ for all. We will need that these cohomology groups share properties with usual group cohomology, e.g.~there exist transfers for finite index subgroups and inner automorphisms act trivially; these can be proven with the same arguments. 

\subsection{The van Est isomorphism} The computation of the continuous cohomology of Lie groups is made straightforward using the van Est isomorphism. To state it, we introduce the following:

\begin{definition}If $\fr{k} \subset \fr{g}$ is a Lie subalgebra, the \emph{relative Lie algebra cochains} $C^*(\fr{g},\fr{k})$ are defined as
\[
   C^*(\fr{g},\fr{k}) \coloneq \left\{\varphi \in \rm{Hom}(\Lambda^*(\fr{g}/\fr{k}),\bb{R}) \,\middle|\,  \parbox{6cm}{\centering $\sum_{i=1}^p\varphi(\overline{X}_1,\dots,\overline{[X,X_i]},\dots,\overline{X}_p)=0$ $\forall X \in \fr{k}$, $\forall \overline{X}_1, \dots, \overline{X}_p \in \fr{g}/\fr{k} $}\right\}, 
\]
with differential given by
\[d\varphi(\overline{X}_0,\ldots,\overline{X}_p) = \sum_{i<j} (-1)^{i+j} \varphi(\ol{[X_i,X_j]},\overline{X}_0,\ldots,\widehat{\overline{X}_i},\ldots,\widehat{\overline{X}_j},\ldots,\ol{X_p}).\]
The \emph{relative Lie algebra cohomology} $H^*(\fr{g},\fr{k})$ is the cohomology of the complex $C^*(\fr{g},\fr{k})$.
\end{definition}

This is natural in maps of pairs of a Lie algebra and a Lie subalgebra. It vanishes if $\fr{g} = \fr{k}$ and when $\fr{k}=0$ recovers the usual Chevalley--Eilenberg complex computing Lie algebra cohomology. It can be used to compute continuous cohomology \cite{vanEst}. 

\begin{theorem}[van Est] \label{thm:van-est} If $G$ is a connected Lie group and $K \subseteq G$ is a maximal compact subgroup, then there is an isomorphism
\[H^*_\rm{cts}(G;\bb{R}) \cong H^*(\fr{g},\fr{k}),\]
where $\fr{g}, \fr{k}$ are the Lie algebras of $G$ and $K$ respectively. 
\end{theorem}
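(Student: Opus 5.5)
The plan is to compare three models of the same object: continuous cochains on $G$, $G$-invariant differential forms on the symmetric space $X = G/K$, and relative Lie algebra cochains. First I reduce from continuous to smooth cochains, using the isomorphism $H^*_\rm{sm}(G;\bb{R}) \cong H^*_\rm{cts}(G;\bb{R})$ recalled in \cref{sec:continuous-cohomology-etc}. Smooth continuous cohomology can then be computed by any resolution of $\bb{R}$ by \emph{relatively injective} smooth $G$-modules. This is the relative homological algebra of Hochschild--Mostow: such resolutions are unique up to $G$-homotopy, and the homogeneous smooth cochains $C^\infty(G^{*+1};\bb{R})$ form one of them.

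Next I produce a second such resolution from $X$. By Cartan--Iwasawa--Malcev, $K$ is connected and $X$ is diffeomorphic to a Euclidean space. The de Rham complex $\Omega^*(X)$ is therefore a resolution of $\bb{R}$ by the Poincaré lemma on $\bb{R}^N$. The Poincaré lemma needs no $G$-equivariance; it only needs a continuous linear contracting homotopy, and the standard radial one gives this. I then show each $\Omega^p(X)$ is relatively injective. Since $X = G/K$, we have
\[\Omega^p(X) \cong \big(C^\infty(G) \otimes \Lambda^p(\fr{g}/\fr{k})^*\big)^K.\]
This is a $K$-invariant summand of a coinduced module $C^\infty(G,V)$. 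Coinduced modules are relatively injective, and because $K$ is compact, averaging over $K$ with Haar measure splits off the summand and preserves relative injectivity.

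It follows that $H^*_\rm{cts}(G;\bb{R})$ is the cohomology of the invariants $\Omega^*(X)^G$. A $G$-invariant form is determined by its value at the basepoint $eK$. That value is an element of $\Lambda^p(\fr{g}/\fr{k})^*$ invariant under the isotropy $K$, and since $K$ is connected this is the same as $\fr{k}$-invariance, i.e.\ exactly the condition defining $C^p(\fr{g},\fr{k})$. Finally, the exterior derivative of invariant forms, computed through the Maurer--Cartan formula for left-invariant forms on $G$ pulled back to $X$, becomes the relative Chevalley--Eilenberg differential. Hence $\Omega^*(X)^G \cong C^*(\fr{g},\fr{k})$ as complexes.

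I expect the main obstacle to be the relative homological algebra step. One must check that smooth cochains and the de Rham resolution are both strengthened resolutions, i.e.\ that the contracting homotopies are continuous linear but not necessarily equivariant. One must also check that the comparison theorem gives a map inducing an isomorphism on invariants. I would handle this by quoting Hochschild--Mostow's framework directly rather than reproving it. Connectedness of $K$, which is needed to identify $K$-invariance with $\fr{k}$-invariance, comes from $G$ being connected.
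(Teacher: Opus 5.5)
Your argument is correct, but it is not the route the paper takes. What you give is the classical Hochschild--Mostow argument (essentially the one in the alternative references the paper lists). Smooth cochains and the de Rham complex $\Omega^*(G/K)$ are both strong resolutions of $\bb{R}$ by relatively injective modules, so $H^*_{\rm{cts}}(G;\bb{R}) \cong H^*(\Omega^*(G/K)^G)$. Since $K$ is connected, the $G$-invariant forms on $G/K$ are exactly the relative cochains $C^*(\fr{g},\fr{k})$.

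The paper does not reprove the theorem. It attributes it to van Est and realises the isomorphism by an explicit zigzag $C^*_{\rm{sm}}(G;\bb{R}) \leftarrow C^*_{\rm{sm}}(G;\bb{R})^K \to C^*(\fr{g},\fr{k})$. The right-hand map antisymmetrises iterated derivatives at the identity along lifts of $\ol{X}_i \in \fr{g}/\fr{k}$. The paper quotes Salazar's Lie-groupoid van Est theorem for the fact that both maps are quasi-isomorphisms.

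What the paper gains is concrete cochain-level maps. Naturality for homomorphisms $\varphi$ with $\varphi(K) \subseteq K'$, and the effect of conjugating $K$, can then be read off from the formula. It is also essentially an inverse to Dupont's integration map used in the authors' earlier work.

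Your version gives the isomorphism only through the comparison theorem, i.e.\ as a homotopy class of cochain maps without a formula. The naturality squares the paper uses later still hold, but you would have to argue them. The map $\varphi$ induces a $\varphi$-equivariant map $G/K \to G'/K'$, whose pullback on forms is a map of augmented resolutions. Uniqueness in the comparison theorem then gives the commuting square, and on invariant forms the induced map is the one on relative cochains induced by $d\varphi$.

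In exchange, your route is self-contained modulo relative homological algebra and Cartan--Iwasawa--Malcev. It also identifies $H^*(\fr{g},\fr{k})$ directly with invariant forms, which is the viewpoint the paper adopts anyway when it passes to the compact dual via $\mu_{\fr{g},\fr{k}}$.
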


In \cite[Section 5.4.3]{KRS2} we used a specific cochain map $C^*(\fr{g},\fr{k}) \overset{\cong}\lra C^*_\rm{cts}(G;\bb{R})$ inducing an isomorphism, due to Dupont \cite[\S 1]{DUPONT1976233}. Here it is more convenient to use what is essentially an inverse: given a connected Lie group $G$ with maximal compact subgroup $K$, there is a zigzag of maps of chain complexes
\[ C^p_\rm{sm}(G;\bb{R}) \overset{\inc}\longleftarrow  C^p_\rm{sm}(G;\bb{R})^K \lra C^p(\fr{g},\fr{k})\]
where $C^p_\rm{sm}(G;\bb{R})^K$ is given by the subspace of those functions that are invariant under the $p+1$ commuting actions of $K$ given in \cite[(4)]{Salazar}, and the right map is given by
\begin{align*}C^p_\rm{sm}(G;\bb{R})^K &\lra C^p(\fr{g},\fr{k}) \\
f &\longmapsto \left((\ol{X}_1,\ldots,\ol{X}_p) \longmapsto \sum_{\sigma \in \fr{S}_p} \rm{sign}(\sigma)\,D_{\ol{X}_{\sigma(1)}} \cdots D_{\ol{X}_{\sigma(p)}} f\right) \end{align*}
where for $\ol{X} \in \fr{g}/\fr{k}$ and $f \in C^p_{\rm{sm}}(G)$, $D_{\ol{X}} f \in C^{p-1}(G)$ is given by lifting $\ol{X}$ to $X \in \fr{g}$, extending it to a left-invariant vector field on $G$, and sending $(g_1,\ldots,g_{p-1})$ to the directional derivative along $X$ at the identity of the smooth function $g \mapsto f(g,g_1,\ldots,g_{p-1})$. These maps induces isomorphisms on cohomology, yielding an isomorphism as in \cref{thm:van-est}
\[\rm{vE}_{G,K} \colon H^*_\rm{cts}(G;\bb{R}) \overset{\cong}\lra H^*(\fr{g},\fr{k}).\]
A modern reference is \cite[Theorem 5.10]{Salazar} (Lie groups and Lie algebras are a special case of Lie groupoids and Lie algebroids), but this may also be deduced from \cite{Guichardet} or \cite{Houard}.

It follows from the construction that if $\varphi \colon G \to G'$ is a homomorphism of connected Lie groups and $K \subseteq G$, $K' \subseteq G'$ are maximal compact subgroups such that $\varphi(K) \subseteq K'$, then the following diagram commutes
\[\begin{tikzcd} H^*_\rm{cts}(G';\bb{R}) \rar{\rm{vE}_{G',K'}} \dar[swap]{\varphi^*} &[5pt] H^*(\fr{g}',\fr{k}') \dar{\varphi^*} \\[-5pt]
H^*_\rm{cts}(G;\bb{R}) \rar{\rm{vE}_{G,K}} & H^*(\fr{g},\fr{k}),\end{tikzcd}\]
where the right vertical map is given by induced map on pairs of Lie algebras.

The van Est isomorphism depends a priori on a choice of $K$, but the latter is mostly immaterial so we will later suppress it from the notation: if $K' \subseteq G$ is a different choice of maximal compact subgroup, there is $g \in G$ such that $g K g^{-1} = K'$ and the commutative diagram for the inner automorphism $c_g \colon G \to G$ given by conjugation with $g$ takes the form
\[\begin{tikzcd} H^*_\rm{cts}(G;\bb{R}) \rar{\rm{vE}_{G,K'}} \dar[swap]{c_g^* = \id} &[5pt] H^*(\fr{g},\fr{k}') \dar{c_g^*} \\[-5pt]
H^*_\rm{cts}(G;\bb{R}) \rar{\rm{vE}_{G,K}} & H^*(\fr{g},\fr{k}).\end{tikzcd}\]

\subsection{The compact dual} \label{sec:compact-duals} In the semisimple case we can identify relative Lie algebra cohomology with the cohomology of a compact dual symmetric space using a ``unitary trick''.

\subsubsection{Simple and semisimple Lie algebras} We start with some recollections. A finite-dimensional Lie algebra $\fr{g}$ is \emph{simple} if it is not abelian and its only Lie ideals are $\{0\}$ and $\fr{g}$. It is \emph{semisimple} if it is isomorphic to a direct sum of simple Lie algebras. 

A Lie algebra $\fr{g}$ has an \emph{adjoint representation} given by the Lie algebra homomorphism
\begin{align*} \rm{ad} \colon \fr{g} &\lra \fr{gl}(\fr{g}) \\
X &\longmapsto \rm{ad}_X \coloneq [X,-].\end{align*}
For semisimple $\fr{g}$, the Lie algebra homomorphism $\rm{ad}$ is injective and the \emph{adjoint group} $\rm{Int}(\fr{g})$ is the connected subgroup of $\rm{GL}(\fr{g})$ with Lie algebra given by $\rm{im}(\rm{ad}) \subseteq \fr{gl}(\fr{g})$ \cite[II.\S 5]{helgason2001differential}. It has trivial centre \cite[II.5.3]{helgason2001differential}.

\subsubsection{Compact duals}

Let $G$ be a connected Lie group and let $K \subseteq G$ be a maximal compact subgroup \cite[VI.2.2]{helgason2001differential}. The inclusion $K \subseteq G$ of a maximal compact subgroup induces an inclusion $\fr{k} \subseteq \fr{g}$ of Lie algebras. For semisimple $\fr{g}$ this can be extended uniquely to a \emph{Cartan decomposition} \cite[III.\S 7]{helgason2001differential} \cite[4.3.1]{VGO}, given by a direct decomposition of vector spaces
\[\fr{g} = \fr{k} \oplus \fr{p} \qquad \text{with $\fr{k},\fr{p} \subseteq \fr{g}$}\]
satisfying (a) $[\fr{k},\fr{p}] \subseteq \fr{p}$ and $[\fr{p},\fr{p}] \subseteq \fr{k}$, and (b) the Killing form $(X,Y) \mapsto \rm{tr}(\rm{ad}_X \circ \rm{ad}_Y)$ is negative-definite on $\fr{k}$ and positive-definite on $\fr{p}$. As $\fr{p}$ and $\fr{k}$ are orthogonal complements with respect to the Killing form, one determines the other. Condition (a) is equivalently to  the \emph{Cartan involution} $\theta \colon \fr{g} \to \fr{g}$ given by $\rm{id}_\fr{k} \oplus - \rm{id}_\fr{p}$ is an automorphism of Lie algebras. A Cartan involution $\theta \colon \fr{g} \to \fr{g}$ is induced by an automorphism $\Theta \colon G \to G$ of Lie groups that we will also refer to as the Cartan involution \cite[Theorem 4.3.2]{VGO}.

Given a Cartan decomposition $\fr{g} = \fr{k} \oplus \fr{p}$ we can define a second Lie algebra by taking $\fr{g}_\rm{u} \coloneqq \fr{k} \oplus \fr{p}$ and letting for $x,y \in \fr{k}$ and $u,v \in \fr{p}$ the Lie bracket be given by
\[[x,y]_\rm{u} = [x,y], \quad [x,u]_\rm{u} = [x,u], \quad [u,v]_\rm{u} = -[u,v].\]
This contains $\fr{k}$ as a subalgebra. One think of it as the real Lie subalgebra $\fr{k} \oplus i \frak{p} \subseteq \fr{g} \otimes \bb{C}$ of the complexification of $\fr{g}$. Through this construction, Cartan decompositions are in bijection with compact real forms of the complexification $\fr{g} \otimes \bb{C}$ \cite[4.\S 1.2]{VGO}, Here a \emph{real form} of a complex Lie algebra $\fr{h}_\bb{C}$ is a real Lie subalgebra $\fr{h} \subseteq \fr{h}_\bb{C}$ so that $\fr{h}_\bb{C} = \fr{h} \oplus i \fr{h}$ \cite[III.\S 6]{helgason2001differential}. A real semisimple Lie algebra is \emph{compact} if its Killing form is negative definite, or equivalently if and only if all connected Lie groups with that Lie algebra isomorphic are compact \cite[Corollary II.6.9, Theorem II.6.9]{helgason2001differential} \cite[Proposition 4.1.3]{VGO}.

The identity map on underlying vector spaces then induces an isomorphism
\[\upsilon_{\fr{g},\fr{k}} \colon H^*(\fr{g},\fr{k}) \overset{\cong}\lra H^*(\fr{g}_\rm{u},\fr{k}).\] 
Note that the Cartan involution acts on relative Lie algebra cochains by $(-1)^p$ in degree $p$, so the differential must be trivial, so there is also an isomorphism of relative Lie algebra algebra cochains. Since $\fr{g}_\rm{u}$ is compact, $G_\rm{u} \coloneq \rm{Int}(\fr{g}_\rm{u})$ is a compact connected Lie group: this is the \emph{compact dual group}. The same is true for the connected subgroup $K_\rm{u} \subseteq G_\rm{u}$ corresponding to $\fr{k} \subseteq \fr{g}_\rm{u}$ and the quotient symmetric space $G_\rm{u}/K_\rm{u}$: this is the \emph{compact dual symmetric space}. There is an isomorphism
\[\mu_{\fr{g},\fr{k}} \colon H^*(\fr{g}_\rm{u},\fr{k}) \overset{\cong}\lra H^*(G_\rm{u}/K_\rm{u};\bb{R})\]
inducing by sending relative Lie algebra cochain to the corresponding left $G_\rm{u}$-invariant differential form on the compact dual symmetric space \cite[Theorem 22.1]{chevalley1948cohomology}. We conclude:

\begin{proposition}\label{prop:van-est-symmetric} If $G$ is a connected semisimple Lie group with finite centre, $K \subseteq G$ is a maximal compact subgroup, and $K_\rm{u} \subseteq G_\rm{u}$ are the compact duals, then there is an isomorphism
\[\mu_{\fr{g},\fr{k}} \circ \upsilon_{\fr{g},\fr{k}} \circ \rm{vE}_G \colon H^*_\rm{cts}(G;\bb{R}) \overset{\cong}\lra H^*_\rm{sing}(G_\rm{u}/K_\rm{u};\bb{R}).\]
\end{proposition}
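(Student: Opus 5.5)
The proposition is the composite of the three isomorphisms already assembled in this section. The plan is to check that each one applies under the stated hypotheses and then compose them.

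First, $G$ is connected and $K$ is a maximal compact subgroup, so \cref{thm:van-est} gives $\rm{vE}_G \colon H^*_\rm{cts}(G;\bb{R}) \cong H^*(\fr{g},\fr{k})$. By the discussion after \cref{thm:van-est}, the choice of $K$ only changes things by an inner automorphism, which acts trivially.

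Second, $\fr{g}$ is semisimple. By \cite[III.\S 7]{helgason2001differential}, $\fr{k}$ extends uniquely to a Cartan decomposition $\fr{g} = \fr{k}\oplus\fr{p}$. The finite centre hypothesis is used here: it guarantees that $K$ is compact with Lie algebra exactly the fixed points of the Cartan involution, so the relative complex $C^*(\fr{g},\fr{k})$ is the one attached to this decomposition.

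Next I define $\fr{g}_\rm{u}$ and check that the identity on $\Lambda^*\fr{p}^\vee$ identifies the complexes $C^*(\fr{g},\fr{k})$ and $C^*(\fr{g}_\rm{u},\fr{k})$. Both consist of $\fr{k}$-invariant forms on $\fr{p}$, and the $\fr{k}$-action agrees because $[x,u]_\rm{u}=[x,u]$. Both differentials vanish. Indeed, the differential only involves $\ol{[X_i,X_j]}$ with $X_i,X_j\in\fr{p}$, and the bracket $[\fr{p},\fr{p}]$ lies in $\fr{k}$, whose image in $\fr{g}/\fr{k}$ is zero. So the differential on invariant cochains is zero for both brackets, and $\upsilon_{\fr{g},\fr{k}}$ is an isomorphism, even on cochains.

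Third, I apply Chevalley–Eilenberg \cite[Theorem 22.1]{chevalley1948cohomology} to the compact connected group $G_\rm{u} = \rm{Int}(\fr{g}_\rm{u})$ and its closed connected subgroup $K_\rm{u}$. For this I must verify two things.
\begin{itemize}
\item $\fr{g}_\rm{u}$ is compact semisimple. Its Killing form is negative definite, since it equals the Killing form of $\fr{g}$ on $\fr{k}$ and minus it on $\fr{p}$, both via the identification $\fr{g}_\rm{u}=\fr{k}\oplus i\fr{p}$. Hence $G_\rm{u}$ is compact.
\item $K_\rm{u}$ is closed in $G_\rm{u}$. It is the identity component of the fixed points of the involution $\theta$ lifted to $G_\rm{u}$, so it is closed and $G_\rm{u}/K_\rm{u}$ is a compact manifold.
\end{itemize}
Averaging over the compact group $G_\rm{u}$ then shows that left-invariant forms compute de Rham cohomology. $G_\rm{u}$ is connected, so it acts trivially on cohomology. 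Invariant forms on $G_\rm{u}/K_\rm{u}$ correspond to $K_\rm{u}$-invariant forms on $\fr{g}_\rm{u}/\fr{k}$. Since $K_\rm{u}$ is connected, these are exactly the $\fr{k}$-invariant ones, i.e.\ $C^*(\fr{g}_\rm{u},\fr{k})$. De Rham's theorem then identifies this with singular cohomology, giving $\mu_{\fr{g},\fr{k}}$.

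Composing the three isomorphisms yields the claim. The main subtle point is the bookkeeping in the second step: ensuring the Lie algebra $\fr{k}$ of the given maximal compact $K$ is exactly the $+1$-eigenspace of a Cartan involution. This is where finite centre enters; for example, for $\widetilde{\SL}_2(\bb{R})$ no compact subgroup has that Lie algebra. I would cite \cite[VI.1.1, VI.2.2]{helgason2001differential} for it.
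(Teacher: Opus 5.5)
Your proposal is correct and follows the paper's route: the proposition is the composite of the van Est isomorphism, the cochain-level identification $\upsilon_{\fr{g},\fr{k}}$, and the Chevalley--Eilenberg isomorphism $\mu_{\fr{g},\fr{k}}$, and your checks (finite centre making $\fr{k}$ the $+1$-eigenspace of a Cartan involution, $G_\rm{u}$ compact, $K_\rm{u}$ closed) fill in details the paper leaves implicit. Your reason for the vanishing differential, $[\fr{p},\fr{p}] \subseteq \fr{k}$, is equivalent to the paper's observation that the Cartan involution acts by $(-1)^p$ on relative cochains of degree $p$.
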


Both $\upsilon_{\fr{g},\fr{k}}$ and $\mu_{\fr{g},\fr{k}}$ are natural in maps of semisimple Lie algebras $\fr{g} \to \fr{g}'$ with compatible Cartan decompositions, in the sense that the following diagram commutes
\[\begin{tikzcd} H^*(\fr{g}',\fr{k}') \dar \rar{\cong}[swap]{\upsilon_{\fr{g}',\fr{k}'}} & H^*(\fr{g}'_\rm{u},\fr{k}') \dar \rar{\cong}[swap]{\mu_{\fr{g}',\fr{k}
}} & H^*_\rm{sing}(G'_\rm{u}/K'_\rm{u};\bb{R}) \dar \\[-5pt]
H^*(\fr{g},\fr{k}) \rar{\cong}[swap]{\upsilon_{\fr{g},\fr{k}}} & H^*(\fr{g}_\rm{u},\fr{k}) \rar{\cong}[swap]{\mu_{\fr{g},\fr{k}}} & H^*_\rm{sing}(G_\rm{u}/K_\rm{u};\bb{R}).\end{tikzcd}\]

\begin{example}The \emph{geometric map} \cite[Section 2]{HartnickOtt} is
\begin{equation}\label{eqn:geometric-map} H^*_\rm{sing}(\rm{B}K_\rm{u};\bb{R}) \overset{q^*}\lra H^*(G_\rm{u}/K_\rm{u};\bb{R}) \xrightarrow{\mu_{\fr{g},\fr{k}}^{-1}} H^*(\fr{g}_\rm{u},\fr{k}) \xrightarrow{\upsilon_{\fr{g},\fr{k}}^{-1}} H^*(\fr{g},\fr{k}) \xrightarrow{\rm{vE}_G^{-1}} H^*_\rm{cts}(G;\bb{R})\end{equation}
where $q \colon G_\rm{u}/K_\rm{u} \to \rm{B}K_\rm{u}$ is induced by $G_\rm{u} \to \ast$. Though this map depends on a maximal compact subgroup $K \subseteq G$, its image is independent of this choice using that maximal compact subgroups are unique up to conjugation and inner automorphisms act trivially on cohomology.\end{example}

\subsection{Reduction to simple Lie algebras}  Our next goal is to reduce the semisimple case to the simple case, and we start in this subsection with with some recollections.

\subsubsection{Reduction} We first prove that whether \cref{thm:monod} holds for a connected semisimple Lie group with finite centre $G$, only depends on the isomorphism class of its Lie algebra $\fr{g}$. A Lie group $G$ has an \emph{adjoint representation} 
\begin{align*} \rm{Ad} \colon G &\lra \rm{GL}(\fr{g}) \\
g &\longmapsto d_e(g(-)g^{-1})\end{align*}
which for connected semisimple Lie groups $G$ with finite centre satisfies \cite[Corollary II.5.2]{helgason2001differential} (i) there is an exact sequence
\begin{equation} \label{eq:adjoint-centre}
    1 \lra Z(G) \lra G \lra G_\rm{ad} \coloneq \rm{im}(\rm{Ad}) \lra 1
\end{equation}
and (ii) there is an identification $\rm{im}(\rm{Ad}) = \rm{Int}(\fr{g}) \subseteq \rm{GL}(\fr{g})$. Using this, we prove:

\begin{proposition}\label{prop:lie-algebra-monod} Suppose two connected semisimple Lie groups with finite centre $G,G'$ have isomorphic Lie algebras. Then \cref{thm:monod} is true for $G$ if and only if it is true for $G'$.
\end{proposition}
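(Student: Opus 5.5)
Both $G$ and $G'$ surject onto the common adjoint group. Indeed, by \eqref{eq:adjoint-centre} and (ii), $G_\rm{ad} = \rm{Int}(\fr{g})$, and an isomorphism $\fr{g}\cong\fr{g}'$ induces an isomorphism $\rm{Int}(\fr{g})\cong \rm{Int}(\fr{g}')$. So it suffices to show that, for $\pi\colon G\to G_\rm{ad}$ a surjection with finite central kernel $Z(G)$, \cref{thm:monod} holds for $G$ if and only if it holds for $G_\rm{ad}$. Applying this twice, once for $G$ and once for $G'$, gives the proposition.

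\emph{Step 1: $\pi^*$ is an isomorphism on continuous cohomology.} The differential $d\pi$ is an isomorphism $\fr{g}\to\fr{g}_\rm{ad}$. If $K\subseteq G$ is maximal compact, then $\pi(K)$ is maximal compact in $G_\rm{ad}$, since the preimage of a compact set under a map with finite kernel is compact. The induced map of pairs $(\fr{g},\fr{k})\to(\fr{g}_\rm{ad},\fr{k}_\rm{ad})$ is therefore an isomorphism. By naturality of the van Est isomorphism (\cref{thm:van-est} and the commutative square following it), $\pi^*\colon H^*_\rm{cts}(G_\rm{ad};\R)\to H^*_\rm{cts}(G;\R)$ is an isomorphism.

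\emph{Step 2: $\pi^*$ is compatible with the comparison maps.} Pulling back along $\pi$ preserves boundedness and continuity, so it commutes with $\circled{1}$. Hence if \cref{thm:monod} holds for $G_\rm{ad}$, then it holds for $G$: each class in $H^*_\rm{cts}(G;\R)$ equals $\pi^*c$, and $c$ has a bounded continuous representative $f$, so $f\circ\pi^{\times p}$ is a bounded continuous representative of $\pi^* c$.

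\emph{Step 3: the converse, which I expect to be the main obstacle.} Suppose \cref{thm:monod} holds for $G$. Given a class $c\in H^*_\rm{cts}(G_\rm{ad};\R)$, let $f$ be a bounded continuous cocycle on $G$ representing $\pi^*c$. We need to descend $f$ to $G_\rm{ad}$. The plan is to average over the finite central subgroup $Z=Z(G)$:
\[\bar f(g_1,\dots,g_p) \coloneq \frac{1}{|Z|^p}\sum_{z_1,\dots,z_p\in Z} f(z_1g_1,\dots,z_pg_p).\]
Each translate $f(z_1g_1,\dots,z_pg_p)$ is bounded and continuous, and averaging commutes with the differential, since $Z$ is central and multiplication on $Z$ is a group operation. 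So $\bar f$ is a bounded continuous cocycle that is $Z^p$-invariant, and hence descends to a bounded continuous cocycle on $G_\rm{ad}$.

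It remains to check that $\bar f$ is cohomologous to $f$ in $C^*_\rm{cts}(G;\R)$, which would then give $[\bar f]=\pi^*c$ and so $c$ is represented by the descended cocycle. This is the standard transfer-type argument. One route is to note that the averaging operator $A$ is a cochain map which is the identity on the $Z$-invariant subcomplex, and that this subcomplex is the pullback $\pi^*C^*_\rm{cts}(G_\rm{ad};\R)$. Since Step 1 shows the inclusion of that subcomplex is a quasi-isomorphism, the composite
\[A\colon C^*_\rm{cts}(G)\to \pi^*C^*_\rm{cts}(G_\rm{ad})\hookrightarrow C^*_\rm{cts}(G)\]
induces a map $\alpha$ on cohomology with $\alpha\circ\iota=\iota$, where $\iota$ is an isomorphism. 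Hence $\alpha=\mathrm{id}$, so $[\bar f]=[f]$, which completes the argument.
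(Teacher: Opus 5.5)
Your argument is correct, and its skeleton is the paper's: reduce to the adjoint quotient $\pi\colon G\to G_{\mathrm{ad}}$ with finite central kernel, then compare the two comparison maps through the square of pullbacks. The difference lies in how the vertical maps of that square are handled.

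The paper simply invokes transfers \cite[Section 8.6]{MonodBook}. From this it concludes that both vertical maps, on bounded continuous and on continuous cohomology, are isomorphisms because $Z(G)$ is finite and central.

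You proceed differently on each side:
\begin{enumerate}[(a)]
\item You get the continuous-cohomology isomorphism from naturality of the van Est isomorphism. This uses that $d\pi$ is an isomorphism and that $\pi$ carries a maximal compact subgroup onto a maximal compact subgroup.
\item For the nontrivial direction, you write the transfer out at the cochain level as averaging over $Z^p$. This visibly preserves boundedness and continuity, and it commutes with the differential because $Z$ is central.
\item The identification $[\bar f]=[f]$ then follows formally. The averaging operator is the identity on the subcomplex $\pi^*C^*_{\mathrm{cts}}(G_{\mathrm{ad}};\R)$, and the inclusion of that subcomplex is a quasi-isomorphism by your Step 1.
\end{enumerate}

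Your route is self-contained and avoids the bounded-cohomology transfer machinery. However, it proves only the equivalence the proposition asks for. It does not show that $\pi^*$ is an isomorphism on $H^*_{\mathrm{bd,cts}}$, which the paper's argument gives as a by-product.
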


\begin{proof} The adjoint representations of $G$ and $G'$ provide homomorphisms onto $\rm{Int}(\fr{g})$ with finite kernel, since $G, G'$ have finite centre by assumption. Thus it suffices to prove the proposition for $G' = G_\rm{ad}$. Using the commutative diagram
\[\begin{tikzcd} H^*_{\rm{bd,cts}}(G_\rm{ad};\bb{R}) \rar \dar & H^*_\rm{cts}(G_\rm{ad};\bb{R}) \dar \\[-5pt]
H^*_{\rm{bd,cts}}(G;\bb{R}) \rar & H^*_\rm{cts}(G;\bb{R})\end{tikzcd}\]
where, by the existence of transfers \cite[Section 8.6]{MonodBook}, the vertical maps are split injective onto the $Z(G)$-invariants. But these are are the entire group since $Z(G)$ is central, so the vertical maps are isomorphisms.\end{proof}

\begin{proposition}\label{prop:simple-semisimple-monod} \cref{thm:monod} is true for all connected \emph{simple} Lie groups with finite centre if and only if it is true for all connected \emph{semisimple} Lie groups with finite centre.
\end{proposition}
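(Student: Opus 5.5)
The direction "semisimple $\Rightarrow$ simple" is immediate, since simple groups are semisimple. For the converse, let $G$ be connected semisimple with finite centre. By \cref{prop:lie-algebra-monod}, only the isomorphism class of $\fr{g}$ matters. We write $\fr{g} = \fr{g}_1 \oplus \cdots \oplus \fr{g}_r$ with each $\fr{g}_i$ simple, and replace $G$ by $G' = G_1 \times \cdots \times G_r$ with $G_i = \rm{Int}(\fr{g}_i)$. Each $G_i$ is a connected simple Lie group with trivial, hence finite, centre. The product $G'$ is connected and semisimple with trivial centre, and its Lie algebra is $\fr{g}$. So it suffices to prove \cref{thm:monod} for $G'$, assuming it holds for each $G_i$.

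The next step is a Künneth formula for continuous cohomology. A maximal compact subgroup of $G'$ is $K = K_1 \times \cdots \times K_r$, and the Cartan decomposition of $\fr{g}$ is the direct sum of those of the $\fr{g}_i$. The compact dual symmetric space is therefore $G'_\rm{u}/K_\rm{u} = \prod_i G_{i,\rm{u}}/K_{i,\rm{u}}$. By \cref{prop:van-est-symmetric}, together with the naturality of $\rm{vE}$, $\upsilon$ and $\mu$ under the projections $p_i \colon G' \to G_i$ (which preserve the chosen maximal compact subgroups and Cartan decompositions), the ordinary Künneth theorem for these compact spaces gives the following. The cup product map
\[\bigotimes_{i} H^*_\rm{cts}(G_i;\bb{R}) \lra H^*_\rm{cts}(G';\bb{R}), \qquad x_1 \otimes \cdots \otimes x_r \longmapsto p_1^*x_1 \smile \cdots \smile p_r^* x_r\]
is an isomorphism. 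Here one must check that the cup product on $H^*_\rm{cts}$ corresponds to the wedge product of invariant forms under these isomorphisms. This compatibility of van Est with products is the step I expect to need the most care, though it is classical. Alternatively, one can argue directly at the level of relative Lie algebra cochains, where $C^*(\fr{g}_1\oplus\fr{g}_2,\fr{k}_1\oplus\fr{k}_2) \cong C^*(\fr{g}_1,\fr{k}_1)\otimes C^*(\fr{g}_2,\fr{k}_2)$.

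It follows that $H^*_\rm{cts}(G';\bb{R})$ is spanned by classes of the form $p_1^*x_1 \smile \cdots \smile p_r^*x_r$. By hypothesis each $x_i$ lifts to some $b_i \in H^*_\rm{bd,cts}(G_i;\bb{R})$. Pullback along continuous homomorphisms preserves bounded continuous cochains, and the cup product of bounded continuous cochains is again bounded and continuous. Since the comparison map $\circled{1}$ is natural and multiplicative, the class $p_1^*b_1 \smile \cdots \smile p_r^*b_r$ maps to $p_1^*x_1 \smile \cdots \smile p_r^*x_r$. Since the image of $\circled{1}$ is a linear subspace, $\circled{1}$ is surjective for $G'$, and hence for $G$.
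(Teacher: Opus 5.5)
Your proposal is correct and follows the paper's route: you reduce to $\prod_i \rm{Int}(\fr{g}_i)$ via \cref{prop:lie-algebra-monod}, then use the K\"unneth isomorphism for continuous cohomology together with the naturality and multiplicativity of the comparison map. The only difference is that you derive K\"unneth from van Est (via the compact duals or relative Lie algebra cochains), whereas the paper cites Borel--Wallach and mentions the van Est deduction only as an alternative; both are fine.
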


\begin{proof}If $G$ is semisimple then we have $\fr{g} \cong \fr{g}_1 \oplus \cdots\oplus \fr{g}_k$ with each $\fr{g}_i$ simple. But $\rm{Int}(\fr{g}) \cong \rm{Int}(\fr{g}_1) \times \cdots \times \rm{Int}(\fr{g}_k)$ has the same Lie algebra, so applying \cref{prop:lie-algebra-monod} we may assume $G = \rm{Int}(\fr{g})$. There is a commutative diagram
\begin{center}
    \begin{tikzcd}
    \bigotimes_{i=1}^k H^*_{\rm{bd,cts}}(\rm{Int}(\fr{g}_i);\mathbb{R}) \rar \dar
    & H^*_{\rm{bd,cts}}(\prod_{i=1}^k \rm{Int}(\fr{g}_i);\mathbb{R}) \dar
         \\[-5pt]
   \bigotimes_{i=1}^k H^*_{\rm{cts}}(\rm{Int}(\fr{g}_i);\mathbb{R}) 
        \rar{\cong}
    & H^*_{\rm{cts}}(\prod_{i=1}^k \rm{Int}(\fr{g}_i);\mathbb{R}),
\end{tikzcd}
\end{center}
where the horizontal maps are induced by the cup product and pulling back along the projections, and the left vertical map is given by the comparison map from bounded continuous to continuous cohomology on each factor. The bottom horizontal map is an isomorphism by the K\"unneth theorem for continuous cohomology \cite[Theorem IX.6.10]{BorelWallach} (this may also be deduced from the van Est isomorphism). This reduces the surjectivity of the comparison map for $G$ to that for the simple factors $G_i$.
\end{proof}

\subsection{Classification of simple Lie groups}\label{sec:classification}  Combining \cref{prop:lie-algebra-monod} and \cref{prop:simple-semisimple-monod}, we see that it suffices to prove \cref{thm:monod} for all simple Lie algebras $\fr{g}$ in the following sense: we must prove it for a single connected Lie group $G$ with finite centre and Lie algebra $\fr{g}$. This group $G$ can be the adjoint group but can also be an alternative choice if more convenient. Simple real Lie algebras come, up to isomorphism, in three forms \cite{Knapp1996}:
\begin{enumerate}[\!\!\!(A)]
    \item Compact real forms of complex simple Lie algebras \cite[Table I]{helgason2001differential} \cite[Table 1]{VGO}.
    \item Complex simple Lie algebras viewed as real Lie algebras \cite[Table I]{helgason2001differential} \cite[Table 1]{VGO}.
    \item Noncompact real forms of complex simple Lie algebras \cite[Table II]{helgason2001differential} \cite[Table 4]{VGO}.
\end{enumerate}
Case (A) is in fact trivial: \cref{thm:monod} is true because the associated adjoint group will be a compact Lie group and the van Est isomorphism implies $H^*_\rm{cts}(G;\bb{R})$ is trivial. In this paper we will first prove \cref{thm:monod} in case (B) and then reduce case (C) to case (B). 

\section{The proof of \cref{thm:monod} for $\SL_n(\bb{C})$} In the section we prove \cref{thm:monod} for $\SL_n(\bb{C})$, by constructing cocycles for the Borel classes and then merely observing these have the desired properties. This is a special case of case (B) in \cref{sec:classification}, in fact the crucial one.

\smallskip

The van Est isomorphism in guise of \cref{prop:van-est-complex} yields a free exterior algebra
\[H^*_\rm{cts}(\SL_n(\bb{C});\bb{R}) \cong H^*_\rm{sing}(\SL_n(\bb{C});\bb{R}) \cong \Lambda^* \langle \ol{c}_{3},\ol{c}_{5},\ldots,\ol{c}_{2n-1}\rangle \]
on generators $\ol{c}_{2i-1}$ that are the unique primitive elements that transgress to the Chern classes $c_i \in H^{2i}(\rm{B}\rm{SL}_n(\bb{C});\bb{R})$ modulo decomposables. (Since the Chern class $c_i$ has degree $2i$, there is a regrettable tension between the notation $\ol{c}_{2i-1}$ and $c_i$.) A similar computation yields $H^*_\rm{cts}(\GL_n(\bb{C});\bb{R})  \cong \Lambda^* \langle \ol{c}_{1},\ol{c}_{3},\ol{c}_{5},\ldots,\ol{c}_{2n-1}\rangle$ and restriction along the inclusion $\SL_n(\bb{C}) \to \GL_n(\bb{C})$ zero on $\ol{c}_{1}$ and the identity on the other generators. The Borel class $\rm{Bo}_{n,2i-1}$ for $i \geq 2$ is, up to a nonzero constant, given by the image of the $\ol{c}_{2i-1}$ under the forgetful map $H^*_\rm{cts}(\GL_n(\bb{C});\bb{R}) \to H^*(\GL_n(\bb{C});\bb{R})$ (recall the right side is the cohomology of $\GL_n(\bb{C})$ as a discrete group).

\begin{corollary}\label{cor:cocycle-bon-stabilised} The Borel class $\rm{Bo}_{m+n,2n-1} \in H^{2n-1}(\GL_{m+n}(\C);\R)$ is, up to a nonzero multiple, represented by an inhomogeneous cocycle satisfying for generic elements (with notation as in \cref{thm:extension-criterion})
    \begin{align*} p_\bb{R}(\sf{g}_n)^{(m)} \colon \GL_{m+n}(\C)^{2n-1} &\lra \bb{R} \\
    [g_1|\compactcdots|g_{2n-1}] &\longmapsto (-1)^{m} \quad \sum_{\mathclap{\substack{
i_0+\compactcdots+i_{2n-1}=m\\
i_0,\compactldots, i_{2n-1}\ge 0
}}} \quad p_\bb{R}(\sf{g}_n)\Big((\!( V_{i_0,\ldots,i_{2n-1}}  \mid v_0^{i_0+1},\compactldots, v_{2n-1}^{i_{2n-1}+1})\!)\Big).\end{align*}
\end{corollary}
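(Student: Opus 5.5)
This corollary should follow by feeding the cocycle of \cite[Theorem D, E]{KRS2} into Goncharov's extension criterion, \cref{thm:extension-criterion}. Start from \cite[Theorem D, E]{KRS2}. Away from a subset contained in a finite union of codimension one submanifolds, the function $p_\bb{R}(\sf{g}_n)$ evaluated on the configuration of vectors $v_0,\ldots,v_{2n-1}$ attached to $[g_1|\compactcdots|g_{2n-1}]$ is an inhomogeneous cocycle on $\GL_n(\bb{C})$. This cocycle represents a nonzero multiple of $\rm{Bo}_{n,2n-1}$.

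The key structural input is stability of the Borel class: restriction along the inclusion $\GL_n(\bb{C}) \to \GL_{m+n}(\bb{C})$ sends $\rm{Bo}_{m+n,2n-1}$ to $\rm{Bo}_{n,2n-1}$. Continuously, this is the fact that $\ol{c}_{2n-1}$ for $\GL_{m+n}(\bb{C})$ restricts to $\ol{c}_{2n-1}$ for $\GL_n(\bb{C})$, since Chern classes are natural under $\rm{B}\GL_n \to \rm{B}\GL_{m+n}$. Then pass through the forgetful map to discrete cohomology.

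The plan has three steps.
\begin{enumerate}[(1)]
\item Check that $p_\bb{R}(\sf{g}_n)$, viewed as a function on configurations $(\!(V \mid v_0,\ldots,v_{2n-1})\!)$ in an $n$-dimensional space, satisfies the hypotheses of \cref{thm:extension-criterion}. I expect these to be the cocycle relation together with the appropriate \emph{coproduct} or \emph{residue} compatibility: the cocycle should vanish on, or be compatible with, degenerate configurations, so that it extends along the flags $v_j^{i_j+1}$ encoding iterated quotients.
\item Apply the criterion $m$ times, or once directly for general $m$. The criterion produces the explicit sum over compositions $i_0+\compactcdots+i_{2n-1}=m$ of $p_\bb{R}(\sf{g}_n)$ evaluated on the quotient configurations $(\!(V_{i_0,\ldots,i_{2n-1}} \mid v_0^{i_0+1},\ldots,v_{2n-1}^{i_{2n-1}+1})\!)$, together with the sign $(-1)^m$. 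The criterion also guarantees that the output is a cocycle on $\GL_{m+n}(\bb{C})$ whose restriction to $\GL_n(\bb{C})$ is cohomologous to the input.
\item Conclude the identification of the class. The output is a cocycle whose restriction to $\GL_n(\bb{C})$ represents a nonzero multiple $\lambda\,\rm{Bo}_{n,2n-1}$. By the exterior algebra description, $H^{2n-1}$ of $\GL_{m+n}(\bb{C})$ in the relevant (stable, Borel) part is detected by restriction once modulo decomposables. So the constructed class agrees with $\lambda\,\rm{Bo}_{m+n,2n-1}$ up to classes that vanish on $\GL_n(\bb{C})$.
\end{enumerate}

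The main obstacle is step (3). One must rule out the constructed cocycle differing from $\lambda\,\rm{Bo}_{m+n,2n-1}$ by decomposable classes or other classes invisible after restriction. I would first try to show that the output class is primitive, i.e.\ pulls back to $1\otimes x + x \otimes 1$ under block sum $\GL_{m+n}\times\GL_{m+n}\to\GL_{2(m+n)}$, or that the construction of \cref{thm:extension-criterion} is natural in $m$. Its primitivity, combined with the fact that primitives in degree $2n-1$ form a one-dimensional space spanned by the Borel class, would then pin down the class.

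If \cref{thm:extension-criterion} is instead stated for the discrete rational-valued setting, one extra step is needed: the real-valued function must be justified as a genuine cocycle on generic elements. The extension to all elements is then obtained by the usual generic-to-everywhere argument for measurable cocycles, i.e.\ modifying on a null set, which is harmless in measurable cohomology by \cref{sec:continuous-cohomology-etc}.
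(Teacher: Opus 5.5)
The architecture matches the paper's. You write the cocycle of \cite[Theorems D, E]{KRS2} as $p_{\bb{R}}(\mathsf{g}_n)\circ F_0$, feed $p_{\bb{R}}(\mathsf{g}_n)$ into \cref{thm:extension-criterion}, and then identify the class. But step (1) is left as a guess, and the guess is not the right condition. The criterion requires $\mathsf{h}=p_{\bb{R}}(\mathsf{g}_n)$ to be a \emph{bi-Grassmannian} cocycle. Besides $\mathsf{h}\circ d=0$, it must satisfy the dual cocycle equation $\mathsf{h}\circ d^\vee=0$, where $d^\vee(\!(v_0,\ldots,v_{2n})\!)=\sum_i(-1)^i(\!(\langle v_i\rangle\mid v_0,\ldots,\widehat{v_i},\ldots,v_{2n})\!)$ on configurations of $2n+1$ vectors in $\C^{n+1}$. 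This is a genuine functional equation for $\mathrm{GLi}_n$, not a vanishing or compatibility condition on degenerate configurations. It is exactly what makes the quotient configurations $(\!(V_{i_0,\ldots,i_{2n-1}}\mid\cdots)\!)$ assemble into a cocycle on $\GL_{m+n}(\C)$. The paper supplies it by applying the real period $p_{\bb{R}}$ to \cite[Corollaries 6.15, 6.23]{KRS1}. Also, the criterion treats each $m$ directly. Its output is a group cocycle, not a bi-Grassmannian cocycle on $C_*(n+1)$, so there is nothing to iterate.

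The more serious problem is step (3): the obstacle you name is not there, and your remedy would fail exactly where it would be needed.

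\begin{itemize}
\item In continuous cohomology there is no obstacle. $H^*_{\mathrm{cts}}(\GL_N(\C);\R)\cong\Lambda^*\langle\overline{c}_1,\overline{c}_3,\ldots,\overline{c}_{2N-1}\rangle$, and restriction to $\GL_n(\C)$ kills only the generators $\overline{c}_{2i-1}$ with $i>n$, all of degree $\ge 2n+1$. So $H^{2n-1}_{\mathrm{cts}}(\GL_{m+n}(\C);\R)\to H^{2n-1}_{\mathrm{cts}}(\GL_n(\C);\R)$ is injective (indeed bijective). Decomposables are \emph{not} invisible after restriction, and no primitivity argument is needed.
\item \cref{thm:extension-criterion}, however, only produces a class in the cohomology of $\GL_{m+n}(\C)$ as a \emph{discrete} group. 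There, degree $2n-1$ is outside the stable range and restriction to $\GL_n(\C)$ need not be injective. Moreover, the primitive classes in degree $2n-1$ for the block-sum coproduct on the discrete cohomology of $\GL(\C)$ are dual to $K_{2n-1}(\C)\otimes\bb{Q}$, which is infinite-dimensional. So primitivity cannot single out the Borel class there.
\end{itemize}

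The missing idea is the bridge between the two settings. By \cite[Section 6.4.1]{KRS2}, $\mathrm{GLi}_n$ is a finite sum of Hodge correlators, which are continuous by \cite[Proposition 7.3]{KRS2}. Hence the explicit formula for $p_{\bb{R}}(\mathsf{g}_n)^{(m)}$ is continuous off a finite union of codimension-one submanifolds, and defines a class in measurable, hence continuous, cohomology. That class restricts to $\lambda\,\mathrm{Bo}_{n,2n-1}$, and injectivity of restriction on continuous cohomology in degree $2n-1$ forces it to be $\lambda\,\mathrm{Bo}_{m+n,2n-1}$. This is how the paper concludes. Your last paragraph invokes measurable cocycles only to pass from generic tuples to all tuples, not to make this identification.
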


\begin{proof}
    In \cite[Theorems D, E]{KRS2} we constructed an inhomogeneous cocycle which represents, up to a nonzero multiple, the Borel class $\rm{Bo}_{n,2n-1} \in H^{2n-1}(\GL_n(\bb{C});\bb{R})$. It is given on generic elements by the map
   \begin{align*} \GL_n(\C)^{2n-1} &\lra \bb{R} \\
    [g_1|\compactcdots|g_{2n-1}] &\longmapsto \rm{GLi}_n(\ell_0,\compactldots,\ell_{n-1},\ell_{2n-1},\compactldots,\ell_n),\end{align*}
where $\ell_i \coloneq g_1 \compactcdots g_i \cdot \ell$ for a fixed line $\ell \in \bb{P}^1(\C^n)$ and $\rm{GLi}_n$ is the Grassmannian cluster polylogarithm of \cite{MR22} (one must make a choice of normalisation of the real period map, but it does not matter for our argument which one chooses). In the notation of \cref{sec:extension}, we recognise that this can be written as the composition of $F_0 \colon C_{2n-1}(\GL_n(\C)^{2n-1}) \to C_{2n-1}(n)$ with the map
\begin{align*}p_\bb{R}(\sf{g}_n) \colon C_{2n-1}(n) &\lra \bb{R} \\
(\!(v_0,\compactldots,v_{2n-1})\!) &\longmapsto \rm{GLi}_n(\ell_0,\compactldots,\ell_{n-1},\ell_{2n-1},\compactldots,\ell_n)\end{align*}
where $\ell_i$ is the span of $v_i$. This puts us in the context of \cref{thm:extension-criterion}, as by applying $p_\bb{R}$ to \cite[Corollary 6.15, 6.23]{KRS1} we get that $p_\bb{R}(\sf{g}_n)$ is a bi-Grassmannian cocycle. Thus these classes extend to $\GL_{m+n}(\bb{C})$ by the given formula outside of subset of $\GL_{n+m}(\bb{C})^{2n-1}$ contained in a finite union of codimension one submanifolds. By \cite[Section 6.4.1]{KRS2}, the Grassmannian cluster polylogarithm can be expressed as a finite sum of multiple polylogarithms and hence as a finite sum of Hodge correlators as in \cite[Section 7.1.1]{KRS2}. These are continuous by \cite[Proposition 7.3]{KRS2} (which in turn uses \cite{Mal20}) and using that induced map on continuous cohomology for $\GL_n(\bb{C}) \to \GL_{m+n}(\bb{C})$ is injective in degrees $\leq 2n-1$, they must be represent the Borel classes.
\end{proof}

We now now prove \cref{thm:monod} for $\SL_n(\bb{C})$:

\begin{proof}[Proof of \cref{thm:monod} for $\SL_n(\bb{C})$]
We must prove that $H^*_\rm{bd,cts}(\SL_n(\C);\R) \to H^*_\rm{cts}(\SL_n(\bb{C});\bb{R})$ is surjective, and by the above considerations it suffices to prove that each of the Borel classes $\rm{Bo}_{n,2i-1}$ for $2 \leq i \leq n$ is represented by a bounded continuous cocycle on $\GL_n(\bb{C})$. In \cref{sec:continuous-cohomology-etc} we saw that to define continuous cohomology we may use measurable functions. Similarly, \cite[Proposition 2.4]{MonodSLn} says that to define bounded continuous cohomology we may use essentially bounded measurable functions. (By its proof, these produce bounded continuous cocycles by a mollification argument as in \cite[\S 4]{Blanc}.) \cref{cor:cocycle-bon-stabilised} gives a cocycle representative for $\rm{Bo}_{n,2i-1}$ and these are essentially bounded continuous cocycles as Hodge correlators are bounded continuous functions for $n \geq 2$ by \cite[Proposition 7.3]{KRS2}.
\end{proof}

\begin{remark}The Borel class $\rm{Bo}_{n,1} \in H^1(\GL_n(\bb{C});\bb{R})$ does \emph{not} admit a bounded continuous representative. It is, up to a nonzero multiple, represented by $[g] \mapsto \log|{\det(g)}|$ \cite{Hamida}.\end{remark}

\section{The proof of \cref{thm:monod} for complex simple Lie groups} In this section we prove \cref{thm:monod} for the complex simple Lie algebras (considered as real simple Lie algebras) that appear as case (B) in \cref{sec:classification}. The classification of complex simple Lie algebras has four infinite families and five exceptional cases \cite[Table I, p.~346]{helgason2001differential}: A, B, C, D, and E6, E7, E8, F4, G2. To do so, we first need a more convenient description of the continuous cohomology of a connected complex semisimple Lie group with finite centre.

\subsection{The van Est isomorphism for complex simple Lie groups} Our goal in this subsection is to prove the following computational tool:

\begin{proposition}\label{prop:van-est-complex} Let $G$ be a connected complex semisimple Lie group with finite centre.
    \begin{enumerate}[(i)]
    \item \label{enum:van-est-complex-i} There is an isomorphism $H^*_\rm{cts}(G;\bb{R}) \cong H^*_\rm{sing}(G;\bb{R})$.
    \item \label{enum:van-est-complex-ii} Given a homomorphism $f \colon G \to G'$ of connected complex semisimple Lie groups with finite centre, we can choose these isomorphisms so that the following diagram commutes
    \[\begin{tikzcd} H^*_\rm{cts}(G;\bb{R}) \rar{\cong} & H^*_\rm{sing}(G;\bb{R}) \\[-5pt]
     H^*_\rm{cts}(G';\bb{R}) \rar{\cong} \uar{f^*} & H^*_\rm{sing}(G';\bb{R}) \uar{f^*}.\end{tikzcd}\]
    \end{enumerate}
\end{proposition}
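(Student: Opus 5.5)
Our plan is to go through \cref{prop:van-est-symmetric}. We identify the compact dual of a complex semisimple group with a compact form of $G$ itself, and then compare that compact form with $G$ as a space.

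Let $G$ be a connected complex semisimple Lie group with finite centre and $K \subseteq G$ a maximal compact subgroup. We first recall that $\fr{k}$ is a compact real form of the complex Lie algebra $\fr{g}$, so that $\fr{g} = \fr{k} \oplus i\fr{k}$ and the Cartan decomposition is $\fr{p} = i\fr{k}$. In this situation the Lie algebra $\fr{g}_\rm{u} = \fr{k} \oplus i\fr{p}$ inside $\fr{g} \otimes \bb{C} \cong \fr{g} \times \bar{\fr{g}}$ is isomorphic to $\fr{k} \oplus \fr{k}$, with $\fr{k}$ embedded diagonally. Concretely, the map $\fr{k} \oplus \fr{p} \to \fr{k} \oplus \fr{k}$, $(x, iy) \mapsto (x+y, x-y)$, is an isomorphism of Lie algebras from $\fr{g}_\rm{u}$ (with its twisted bracket) that sends $\fr{k}$ to the diagonal. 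We check this by expanding the brackets; the sign $[u,v]_\rm{u} = -[u,v]$ is exactly what makes it work.

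Hence $G_\rm{u}/K_\rm{u} \cong (K_1 \times K_1)/\Delta K_1$ for a compact connected group $K_1$ with Lie algebra $\fr{k}$. This homogeneous space is diffeomorphic to $K_1$ via $(a,b) \mapsto ab^{-1}$, and the correct covering group is irrelevant for real cohomology up to the identifications below. Because the isomorphism $\mu$ goes through invariant forms, it is cleaner to work at the level of relative Lie algebra cohomology. This gives
\[
H^*(\fr{g}_\rm{u},\fr{k}) \cong H^*(\fr{k}\oplus\fr{k},\Delta\fr{k}) \cong H^*(\fr{k}) \cong H^*_\rm{sing}(K;\bb{R}),
\]
where the middle isomorphism is the standard identification of biinvariant forms, and the last one is Chevalley--Eilenberg for the compact connected group $K$. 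Finally $K \hookrightarrow G$ is a homotopy equivalence by the Cartan/Iwasawa decomposition $G \cong K \times \exp(\fr{p})$, so $H^*_\rm{sing}(G;\bb{R}) \cong H^*_\rm{sing}(K;\bb{R})$. Combining this with \cref{prop:van-est-symmetric} proves (i).

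For (ii), let $f \colon G \to G'$ be a holomorphic homomorphism. We choose $K' \supseteq f(K)$; this is possible since $f(K)$ is compact and hence lies in some maximal compact subgroup. Then $df$ is complex-linear and maps $\fr{k}$ to $\fr{k}'$, so it also maps $\fr{p} = i\fr{k}$ to $i\fr{k}' = \fr{p}'$, and the Cartan decompositions are compatible. Each isomorphism in the chain is natural in such maps:
\begin{itemize}
\item $\rm{vE}$, by the commutative square recorded after \cref{thm:van-est};
\item $\upsilon$, by the naturality diagram;
\item the identification $\fr{g}_\rm{u} \cong \fr{k}\oplus\fr{k}$, since $df$ commutes with multiplication by $i$;
\item the Chevalley--Eilenberg map, for the restriction $f|_K \colon K \to K'$;
\item the homotopy equivalence $K \simeq G$, since $f|_K$ is the restriction of $f$.
\end{itemize}
Choosing the isomorphism of (i) with these compatible compact subgroups therefore makes the square commute.

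The main obstacle is making the identification $\fr{g}_\rm{u} \cong \fr{k}\oplus\fr{k}$ with diagonal $\fr{k}$ precise and natural, and verifying that the induced isomorphism $H^*(\fr{k}\oplus\fr{k},\Delta\fr{k}) \cong H^*(\fr{k})$ is compatible with $df$. We would do this by an explicit formula. Restricting relative cochains along $\fr{k} \to \fr{k}\oplus\fr{k}$, $x \mapsto (x,0)$, whose composite with the projection to $(\fr{k}\oplus\fr{k})/\Delta\fr{k}$ is an isomorphism, gives a cochain map that is evidently natural. We then check it is a quasi-isomorphism by comparing invariant forms on $(K_1\times K_1)/\Delta K_1 \cong K_1$.

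If the non-holomorphic case of (ii) is intended (real Lie homomorphisms between complex groups), the compatibility $df(i\fr{k}) \subseteq \fr{p}'$ can fail. We would then restrict to holomorphic $f$, which is all that is needed later for representations $G \to \SL(V)$.
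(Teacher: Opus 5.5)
Your proposal is correct and follows essentially the paper's route. Both arguments use the isomorphism $\fr{g}_\rm{u} \cong \fr{k}\oplus\fr{k}$, $(x,iy)\mapsto(x+y,x-y)$, which sends $\fr{k}$ to the diagonal; both identify the result with the cohomology of a compact form, use $K \simeq G$, and for (ii) choose compatible compact subgroups. The differences are minor: you stay algebraic by restricting along $x\mapsto(x,0)$ to $H^*(\fr{k})\cong H^*_\rm{sing}(K;\bb{R})$ rather than passing through $\rm{Int}(\fr{u})$ and the finite cover $K\to\rm{Int}(\fr{u})$, and you get $f(K)\subseteq K'$ directly from the fact that compact subgroups lie in maximal compact ones, which bypasses the two-case \cref{lem:compatible-real-forms}.
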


\begin{proof}As usual, let $\fr{k} \subseteq \fr{g}$ denote the Lie algebras of $K \subseteq G$. Cartan decompositions of a complex semisimple Lie algebra $\fr{g}$ are in bijection with compact real forms $\fr{u} \subseteq \fr{g}$, where $\fr{k} = \fr{u}$ and $\fr{p}$ is given by $i \fr{u}$. The Lie algebra $\fr{g}_\rm{u}$ is isomorphic to $\fr{u} \oplus i\fr{u}$ as a vector space with Lie bracket given by $[-,-]_\rm{u}$. The map
\begin{align*} \rho \colon \fr{g}_\rm{u} &\lra \fr{u} \oplus \fr{u} \\
(x,iu) & \longmapsto (x+u,x-u)\end{align*}
is then an isomorphism of Lie algebras, if on the target we use the direct sum Lie bracket:
\begin{align*}\rho([(u,iv),(u',iv')]_\rm{u} &= \rho([u,u']+[v,v'],i[u,v']+i[v,u']) \\& = ([u,u']+[v,v']+[u,v']+[v,u'],[u,u']+[v,v']-[u,v']-[v,u']) \\
&= ([u+v,u'+v'],[u-v,u'-v']) \\
&= [(u+v,u-v),(u'+v',u'-v')] \\
&= [\rho(u,iv),\rho(u',iv')].\end{align*}
Under this isomorphism, $\fr{k} = \fr{u}$ is mapped to the diagonal $\fr{u} = \{(x,x) \mid x \in \fr{u}\} \subseteq \fr{u} \oplus \fr{u}$. Taking adjoint groups, this identifies $G_\rm{u} = \rm{Int}(\fr{g}_\rm{u})$ with $\rm{Int}(\fr{u}) \times \rm{Int}(\fr{u})$ and its maximal compact $K$ corresponding to $\fr{k} = \fr{u} \subseteq \fr{g}$ is given by the diagonal copy $\rm{Int}(\fr{u})$ and the compact dual symmetric space can be identified as $(\rm{Int}(\fr{u}) \times \rm{Int}(\fr{u}))/\rm{Int}(\fr{u}) \cong \rm{Int}(\fr{u})$ using $(u,v) \mapsto uv^{-1}$. So we have isomorphisms
\[H^*_\rm{cts}(G;\bb{R}) \overset{\cong}{\longleftarrow} H^*(\fr{g},\fr{u}) \overset{\cong}\lra H^*(\fr{g}_\rm{u},\fr{u}) \overset{\cong}\lra H^*(\fr{u} \oplus \fr{u},\fr{u}) \overset{\cong}\lra H^*_\rm{sing}(\rm{Int}(\fr{u});\bb{R}).\] 
Since $K \to \rm{Int}(\fr{u})$ is a finite central cover and $K \subseteq G$ is a homotopy equivalence, it remains to observe there are isomorphisms of cohomology groups of spaces 
\[H^*_\rm{sing}(\rm{Int}(\fr{u});\bb{R}) \cong H^*_\rm{sing}(K;\bb{R}) \cong H^*_\rm{sing}(G;\bb{R}).\]
This completes the proof of \eqref{enum:van-est-complex-i}.

For \eqref{enum:van-est-complex-ii}, observe that the isomorphisms are determined completely by the choice of compact real form $\fr{u} \subseteq \fr{g}$: tracing through the isomorphisms, the diagram commutes if we can choose $\fr{u} \subseteq \fr{g}$ and $\fr{u}' \subseteq \fr{g}'$ so that $f_*(\fr{u}) \subseteq \fr{u}'$. Such a choice can be made by \cref{lem:compatible-real-forms}.
\end{proof}

\begin{lemma}\label{lem:compatible-real-forms} Let $h \colon \fr{g} \to \fr{g}'$ be a map of semisimple complex Lie algebras. For any compact real form $\fr{u} \subseteq \fr{g}$ there exists a compact real form $\fr{u}' \subseteq \fr{g'}$ such that $h(\fr{u}) \subseteq \fr{u}'$.
\end{lemma}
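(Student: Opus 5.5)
The plan is to pass to groups, use that compact subgroups of a complex semisimple group lie in maximal compact subgroups, and read off compact real forms from those. First reduce to the case where $h$ is arbitrary with $\fr{g}$ semisimple; the image $h(\fr{g})$ is a semisimple subalgebra of $\fr{g}'$. Let $G'$ be the simply connected (or adjoint) connected complex Lie group with Lie algebra $\fr{g}'$, and let $U \subseteq G'$ be the connected Lie subgroup with Lie algebra $h(\fr{u})$. Since $\fr{u}$ is compact semisimple, $h(\fr{u}) \cong \fr{u}/\ker(h|_{\fr{u}})$ is again compact semisimple (a quotient of a compact semisimple Lie algebra by an ideal is a sum of some of its simple factors). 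By Weyl's theorem, every connected Lie group with compact semisimple Lie algebra is compact, so $U$ is a compact subgroup of $G'$. The main point to check here is that the immersed subgroup $U$ is actually compact, hence closed; Weyl's theorem gives this directly, since the universal cover of $U$ is already compact.

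Next I invoke the Cartan--Iwasawa--Malcev theorem: every compact subgroup of a connected Lie group is contained in a maximal compact subgroup, and maximal compact subgroups are all conjugate \cite[VI.2.2]{helgason2001differential}. So $U \subseteq K'$ for some maximal compact $K' \subseteq G'$. For a connected complex semisimple Lie group, a maximal compact subgroup $K'$ has Lie algebra $\fr{k}'$ a compact real form of $\fr{g}'$. This is the standard fact that, for the complex semisimple Lie algebra $\fr{g}'$ viewed as a real Lie algebra, Cartan decompositions are exactly $\fr{g}' = \fr{u}' \oplus i\fr{u}'$ with $\fr{u}'$ a compact real form. This is already used in the proof of \cref{prop:van-est-complex}, and the maximal compact subgroup has Lie algebra $\fr{k}'$ of the Cartan decomposition. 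Setting $\fr{u}' \coloneq \fr{k}'$, we get $h(\fr{u}) = \mathrm{Lie}(U) \subseteq \mathrm{Lie}(K') = \fr{u}'$.

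The step I expect to need the most care is the identification of $\mathrm{Lie}(K')$ with a compact real form, rather than merely with some compact subalgebra. I would handle it via the Cartan decomposition of $\fr{g}'_\bb{R}$. Its Cartan involution $\theta$ has $\fr{k}'$ as $+1$-eigenspace. For a complex semisimple algebra, $\theta$ is complex antilinear, so $\fr{p}' = i\fr{k}'$ and $\fr{k}'$ is a real form. The Killing form of $\fr{g}'_\bb{R}$ is twice the real part of the complex one, so negativity on $\fr{k}'$ shows $\fr{k}'$ is compact.

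An alternative, purely algebraic route avoids groups altogether: extend the negative definite invariant form on $h(\fr{u})$ using Mostow's theorem that a Cartan involution of the subalgebra extends compatibly. I would fall back on this only if the group-theoretic argument ran into trouble with the choice of $G'$, for instance with non-closed subgroups.
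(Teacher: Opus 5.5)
Your proof is correct. Its core is the paper's Case 2: from a compact semisimple subalgebra of $\fr{g}'$, form the connected subgroup it generates, note that this subgroup is compact, place it inside a maximal compact subgroup $K'$, and take $\fr{u}'$ to be the Lie algebra of $K'$.

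Where you differ is the decomposition. The paper factors $h$ as $\fr{g} \twoheadrightarrow \fr{g}/\ker(h) \hookrightarrow \fr{g}'$ and treats the surjection separately. For that step it splits $\fr{g} \cong \ker(h) \oplus \fr{g}/\ker(h)$ and uses conjugacy of compact real forms under $\rm{Int}(\fr{g}) = \rm{Int}(\ker h) \times \rm{Int}(\fr{g}/\ker h)$. You instead observe that $h(\fr{u}) \cong \fr{u}/(\fr{u} \cap \ker h)$ is already compact semisimple, so the Case 2 argument applies to it directly and the surjective case is unnecessary.

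The paper's detour only buys the sharper equality $h(\fr{u}) = \fr{u}'$ when $h$ is surjective, which \cref{prop:van-est-complex} does not need. Your route is shorter. It also makes explicit two points the paper passes over quickly. First, the immersed subgroup is compact, hence closed, by Weyl's theorem. Second, the Lie algebra of a maximal compact subgroup of a complex semisimple group is a compact real form, not merely some compact subalgebra.

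For the second point, your claim that the Cartan involution $\theta$ of $\fr{g}'_{\bb{R}}$ is antilinear needs one line of justification. Every Cartan involution is conjugate, under the complex-linear inner automorphisms $\rm{Int}(\fr{g}'_{\bb{R}})$, to conjugation with respect to a compact real form, and that conjugation is antilinear.
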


 \begin{proof}
    We can factor $h$ as $\fr{g} \twoheadrightarrow \fr{g}/\ker(\rho) \hookrightarrow \fr{g}'$, so it suffices to prove the result for each of these maps separately, and hence we have two cases to consider: 

    \smallskip

    \noindent \emph{Case 1:} The map $h$ is surjective so $\fr{g}'=\fr{g}/\ker(h)$. By definition $\ker(h) \subset \fr{g}$ is an ideal. Since $\fr{g}$ is semisimple, we can write $\fr{g}=\bigoplus_{i \in I} \fr{g}_i$ as a sum of simple Lie algebras and must have $\ker(\rho)=\bigoplus_{j \in J} \fr{g}_j$ for some $J \subseteq I$ \cite[II.6.3]{helgason2001differential}. Thus, $\fr{g}/\ker(\rho)= \bigoplus_{k \in I \setminus J} \fr{g}_k$ and there is a canonical Lie algebra isomorphism $\fr{g} \cong \ker(h) \oplus \fr{g}/\ker(h)$. Let $\fr{u} \subseteq \fr{g}$ be an arbitrary compact real form. For compact real forms $\fr{v} \subseteq \ker(h)$ and $\fr{w} \subseteq \fr{g}/\ker(h)$, $\fr{v} \oplus \fr{w} \subseteq \fr{g}$ is also a compact real form. 
    By \cite[4.1.2]{VGO} there exists $f \in \rm{Int}(\fr{g})$ such that $f(\fr{v} \oplus \fr{w})=\fr{u}$. As $\rm{Int}(\fr{g})=\rm{Int}(\ker(\rho)) \times \rm{Int}(\fr{g}/\ker(\rho))$, we have $f=(f_1,f_2)$ for some $f_1 \in \rm{Int}(\ker(h))$ and $f_2 \in \rm{Int}(\fr{g}/\ker(h))$. Then taking $\fr{u}'=f_2(\fr{w})$ gives the required compact form.

    \smallskip

    \noindent \emph{Case 2:} The map $h$ is injective so $\fr{g} \subseteq \fr{g}'$. Pick an arbitrary compact real form $\fr{u} \subset \fr{g}$. Since $\fr{g}$ is semisimple, $\fr{u}$ is also semisimple. Let $G_u \subset \rm{Int}(\fr{g}')$ be the connected Lie subgroup corresponding to $\fr{u} \subseteq \fr{g} \subseteq \fr{g}'$. Since $\fr{u}$ is compact semisimple the group $G_u$ must be compact. Hence, there exists a maximal compact subgroup $K \subset \rm{Int}(\fr{g}')$ such that $G_u \subset L$ and hence $\fr{u}$ is contained in the Lie algebra $\fr{k}$ of $K$. Taking $\fr{u'} = \fr{k}$ gives the required compact form.\end{proof}

\begin{remark}One may in fact verify that the isomorphisms in \cref{prop:van-est-complex} \eqref{enum:van-est-complex-i} are independent of the choice of the compact real form $\fr{u}$. We do not need this.\end{remark} 

\subsection{Lie algebra cohomology and Casimir elements} \label{sec:casimir} For a connected complex semisimple Lie group $G$ with finite centre, both $H^*(G;\bb{C})$ and $H^*(\rm{B}G;\bb{C})$ have concise algebraic descriptions in terms of its Lie algebra $\fr{g}$.

Firstly, the Chevalley--Eilenberg cochain complex $C^*(\fr{g})=(\Lambda^*(\fr{g}^\vee),d)$ with $\fr{g}^\vee$ placed in degree 1, can be identified with the cochain complex $\Omega^*(G)^G$ of left $G$-invariant $\bb{C}$-valued differential forms on $G$. The inclusion $\Omega^*(G)^G \hookrightarrow \Omega^*(G)$ induces an isomorphism $H^*(\fr{g}) \cong H^*_\rm{sing}(G;\bb{C})$ \cite[Section 15, 17]{chevalley1948cohomology}. Viewing $\fr{g}$ as a $\fr{g}$-module via the adjoint representation and $\fr{g}^\vee$ as the dual representation, we can consider $\Lambda^*(\fr{g}^\vee)$ as a graded $\fr{g}$-representation. Any $\fr{g}$-invariant element $\omega \in (\Lambda^*(\fr{g}^\vee))^\fr{g}$ is closed with respect to the Chevalley--Eilenberg differential, and sending an invariant element to its cohomology class gives an isomorphism of algebras \cite[Theorem 19.1]{chevalley1948cohomology}
\[(\Lambda^*(\fr{g}^\vee))^\fr{g} \overset{\cong}\lra H^*(\fr{g}) \overset{\cong}\lra H^*_\rm{sing}(G;\bb{C}).\]
The domain is a finite-dimensional Hopf algebra, so must be an exterior algebra $\Lambda^* \langle x_1,\ldots,x_r \rangle$ on primitive generators in odd degree.

Secondly, the Chern--Weil construction gives an isomorphism of algebras
\[S^*(\fr{g}^\vee)^\fr{g} \overset{\cong}\lra H^*(\rm{B}G;\bb{C})\] 
where on the left side $\fr{g}^\vee$ is placed in degree 2. It is rendered more computable using the Chevalley restriction theorem, which says that there is a restriction map $S^*(\fr{g}^\vee)^\fr{g} \to S^*(\fr{h}^\vee)^W$ for $\fr{h} \subseteq \fr{g}$ a Cartan subalgebra and $W$ the Weyl group, and that this is an isomorphism \cite[Section 23.1]{Humphreys}. By the Chevalley--Shephard--Todd theorem this is a polynomial algebra on generators $\bb{C}[y_1,\ldots,y_s]$ in even degree \cite{Chevalley}.

\smallskip

To connect these we use the fibre sequence $G \to \ast \to \rm{B}G$, or more precisely, the associated transgression for the cohomological Serre spectral sequence $E^{p,q}_r = H^*(\rm{B}G;\bb{C}) \otimes H^*(G;\bb{C}) \Rightarrow H^{p+q}(\ast;\bb{C})$ given by
\[\tau \colon E_{p-1}^{0,p-1}  \lra E_{p-1}^{p,0}\]
where $E_{p-1}^{0,p-1}$ is the subspace of $H^{p-1}(G;\bb{C})$ of transgressive elements and $E_{p-1}^{p,0}$ is a quotient of $H^p(\rm{B}G;\bb{C})$ (see \cite[Section 1]{Carlson} for a recent survey). We have \cite{Cartan}:

\begin{theorem}[Cartan] \label{thm:cartan} \,
\begin{enumerate}[(i)]
    \item There are equalities $E_{p-1}^{0,p-1} = \prim\,H^{p-1}(G;\bb{C})$ and $E_{p-1}^{p,0} = \rm{indec}\, H^p(\rm{B}G;\bb{C})$.
    \item The transgression induces an isomorphism
\[\tau \colon E_{p-1}^{0,p-1} = \prim\,H^{p-1}(G;\bb{C}) \lra E_{p-1}^{p,0} = \rm{indec}\, H^p(\rm{B}G;\bb{C}).\]
\end{enumerate}
\end{theorem}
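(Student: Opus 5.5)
The plan is to derive both parts of \cref{thm:cartan} from the structure of the Serre spectral sequence of the fibre sequence $G \to \ast \to \rm{B}G$, using the two algebraic descriptions recalled in \cref{sec:casimir}: $H^*(G;\bb{C}) \cong \Lambda^*\langle x_1,\ldots,x_r\rangle$ with $x_i$ primitive of odd degree, and $H^*(\rm{B}G;\bb{C}) \cong \bb{C}[y_1,\ldots,y_s]$ with $y_j$ of even degree. Since the total space is contractible, $E_\infty$ is $\bb{C}$ concentrated in bidegree $(0,0)$.

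The first step is a counting argument. Compare Poincaré series: the $E_\infty$ page being trivial forces $\prod_j (1-t^{\deg y_j})^{-1} \cdot \prod_i (1+t^{\deg x_i})$ to be related to $1$ through successive differentials. The cleanest route is the standard Borel argument: construct the model spectral sequence $\bb{C}[y_1,\ldots,y_s] \otimes \Lambda^*\langle x_1,\ldots,x_r \rangle$ with $d(x_i) = y_i$. This is acyclic exactly when $r = s$ and $\deg y_i = \deg x_i + 1$. By the Zeeman comparison theorem (or direct induction on degree), an acyclic spectral sequence with the given base and fibre must agree with such a model. From this I extract that $r = s$, that the generators $x_i$ can be chosen transgressive, and that $\tau(x_i) = y_i$ modulo decomposables.

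Next I identify the domain and target of $\tau$ as in (i). On the base side, the bottom row $E^{p,0}_{p-1}$ is $H^p(\rm{B}G)$ modulo the images of earlier differentials. Those images land in the ideal generated by the $y_j$ of lower degree, and in degree $p$ this ideal is the decomposables together with the lower $y$'s, which cannot occur in degree $p$ except as decomposables. Using the model, the earlier differentials hit exactly the products $y_j \cdot H^{p-\deg y_j}$, i.e. the decomposables, giving $E^{p,0}_{p-1} = \rm{indec}\, H^p$. On the fibre side, the transgressive elements form a subspace containing the $x_i$. I will use the classical fact, via the path-loop fibration and the suspension, that transgressive elements are primitive. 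Conversely, the primitives in an exterior algebra on primitive odd generators are exactly the span of the generators, since for Hopf algebras over $\bb{C}$ of this type $\prim = $ indecomposables. Hence $E^{0,p-1}_{p-1} = \prim\, H^{p-1}$.

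For (ii), once both sides are identified, $\tau$ sends the basis $x_i$ of $\prim$ to the basis $y_i$ of $\rm{indec}$, so it is an isomorphism.

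The main obstacle is the comparison step showing that the actual spectral sequence matches the model, in particular that no exotic differential kills a product of $x$'s against a non-generator. I would handle this by induction on total degree, with multiplicativity: the lowest-degree fibre class surviving to $E_{p}$ must transgress nontrivially, since nothing else can kill it, and its image must be indecomposable by degree counting against the known Poincaré series. Then I would quotient and repeat. Alternatively, I could simply cite Borel's theorem on transgression in universal bundles as in \cite{Cartan}.
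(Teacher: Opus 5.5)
The paper gives no argument for \cref{thm:cartan}; it simply quotes it from \cite{Cartan}. Your fallback of citing Borel--Cartan is therefore exactly what the paper does. As a self-contained argument, however, your sketch has a genuine gap. It sits at the one step that carries the content of the theorem: showing that the generators of $H^*(G;\bb{C})$ can be chosen transgressive.

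\emph{Zeeman is circular here.} Zeeman's comparison theorem compares two spectral sequences along a \emph{given} morphism; it says nothing about an abstract spectral sequence with prescribed $E_2$ and $E_\infty$. To build a morphism from the Koszul model $\bb{C}[y_1,\ldots,y_r]\otimes\Lambda^*\langle x_1,\ldots,x_r\rangle$ into the Serre spectral sequence, you must send each $x_i$ to a class that survives to $E_{|x_i|+1}$ and transgresses to $y_i$. That is precisely what you are trying to prove.

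\emph{The fibre-side induction addresses the wrong point.} A class in the column $E^{0,*}$ can never be hit, so once it survives to $E_p$ it automatically has nonzero transgression. The real issue is whether a generator survives that long at all. It might support an earlier $d_t$, $t<p$, into $E_t^{t,p-t}$ with $p-t\geq 1$, and you would then need to show this can be removed by changing it by decomposables. Such targets are nonzero in general: for $G=\SL_6(\bb{C})$ one has $E_4=E_2$, and a degree-$11$ generator could a priori have $d_4$ landing in $H^4(\rm{B}G;\bb{C})\otimes H^8(G;\bb{C})\neq 0$. ``Quotient and repeat'' does not say how to exclude this. Everything downstream also rests on this step: the identification of $E^{p,0}$ with indecomposables, and transgressive $=$ primitive by counting.

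\emph{How to repair it within your framework.} Induct from the base side, using that $H^*(\rm{B}G;\bb{C})$ is polynomial, which the paper has independently from Chern--Weil and Chevalley restriction.
\begin{enumerate}[(1)]
\item Suppose transgressive $x_i$ with $\tau(x_i)\equiv y_i$ have been found for all $|y_i|<N$. These do give a genuine morphism from the partial Koszul model.
\item A range version of Zeeman's theorem, or direct bookkeeping of the pages below total degree $N$, shows that the differentials $d_t$, $t<N$, into $E^{N,0}$ hit only decomposables, as in the model.
\item Since $E_\infty^{N,0}=0$, each polynomial generator of degree $N$ must then be the transgression of some class in $H^{N-1}(G;\bb{C})$.
\item At the end, Zeeman's theorem applied to the full model gives $H^*(G;\bb{C})\cong\Lambda^*\langle x_1,\ldots,x_r\rangle$ on transgressive generators.
\end{enumerate}
Cartan's own route is different. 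Via the Weil algebra, the Chern--Simons form of an invariant polynomial $P$ restricts on the fibre to a multiple of the primitive invariant form built from $P$. This exhibits that primitive class as transgressive, with transgression the Chern--Weil class of $P$.

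With transgressive generators in hand, the rest of your outline goes through: transgressive $\subseteq$ primitive via the cohomology suspension, the counting argument, and the bottom-row computation from the Koszul model.
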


In particular, we have $r=s$ and may choose isomorphism $H^*(G;\bb{C}) \cong \Lambda^* \langle x_1,\ldots,x_r \rangle$ and $H^*(\rm{B}G;\bb{C}) \cong \bb{Q}[y_1,\ldots,y_r]$ with $|x_i| = 2j_i-1$, $|y_i| = 2j_i$, $x_i$ primitive, and $\tau(x_i) = y_i$ modulo decomposables.

\smallskip

We will also need a different algebraic description of $H^*(\rm{B}G;\bb{C})$, in order to extract results from the literature. We use the Killing form to $\fr{g}$-equivariantly identify $\fr{g}$ with $\fr{g}^\vee$. The Poincar\'e--Birkhoff--Witt theorem says that the inclusion of $\fr{g}$ into its the (complex) universal enveloping algebra $U(\fr{g})$, yields a natural isomorphism of graded $\fr{g}$-modules
\[\lambda \colon S^*(\fr{g}) \overset{\cong}\lra \gr^*\,U(\fr{g}),\] 
where the filtration on $U(\fr{g})$ is induced by the filtration $T(\fr{g})$ obtained by placing $\fr{g}$ in filtration degree 1. Moreover, $\lambda$ maps the subalgebra $(S^*(\fr{g}))^\fr{g}$ of invariants into the centre $\gr^*\,Z(U(\fr{g}))$. It follows from the proofs in \cite[p.~94]{Gauger} \cite[Lemma 1]{Berdjis} that elements $x_1,\ldots,x_k \in Z(U(\fr{g}))$ of filtration degree $m_1,\ldots,m_k$ generate $Z(U(\fr{g}))$ if and only if $\lambda^{-1}(\ol{x}_1),\ldots,\lambda^{-1}(\ol{x}_k)$ generate $(S^*(\fr{g}))^\fr{g}$, where $\ol{x}_i$ is the image in $\gr^{m_i} Z(U(\fr{g}))$. For $m \in U(\fr{g})$ of specified filtration degree $m$, we will write $\sigma_m(x)$ for the image of $\lambda^{-1}(\ol{x})$ in $S^m(\fr{g}^\vee)$ under the identification using the Killing form, thinking of it as a ``symbol''.

\subsection{The proof of \cref{thm:monod} for complex simple Lie algebras} We first deal with the complex simple Lie algebras $\fr{g}$ that are \emph{not} in the infinite family D. 

\begin{proof}[Proof of \cref{thm:monod} for complex simple Lie groups not in the infinite family D] We fix a non-trivial complex representation of smallest dimension $\rho \colon \fr{g} \to \fr{gl}_n$, and is induced by a homomorphism $G \to \GL_n(\bb{C})$ for some connected complex Lie group $G$ with finite centre and Lie algebra $\fr{g}$. Since $G$ is semisimple this factors over $\SL_n(\bb{C})$ (the composition $G \to \GL_n(\bb{C}) \to \bb{C}^*$ with the determinant is trivial because its image is semisimple). Using naturality and the fact that \cref{thm:monod} is known for $\SL_n(\bb{C})$, it suffices to prove the following map is surjective
\[H^*_\rm{cts}(\SL_n(\bb{C});\bb{R}) \lra H^*_\rm{cts}(G;\bb{R}).\]

By \cref{prop:van-est-complex} it suffices to prove that 
\[H^*_\rm{sing}(\SL_n(\bb{C});\bb{C}) \lra H^*_\rm{sing}(G;\bb{C}).\]
is surjective. We claim that if $\fr{g}$ is a complex simple Lie algebra not in the family $D$, then \cite[Corollary 2.4]{Casimirelements} (see its references for proofs, or \cite{Molev}) implies the restriction map
\[\rho^\vee \colon (S^*(\fr{gl}_n^\vee))^{\fr{gl}_n} \lra (S^*(\fr{g}^\vee))^\fr{g}\] 
induced by $\rho$ is surjective. Assuming this, by the naturality of the Chern--Weil construction the map $H^*(\rm{B}\GL_n(\bb{C});\bb{C}) \to H^*(\rm{B}G;\bb{C})$ is surjective.
Since $G \to \GL_n(\C)$ factors trough $\SL_n(\C)$ then $H^*(\rm{B}\SL_n(\bb{C});\bb{C}) \to H^*(\rm{B}G;\bb{C})$ is also surjective.  
By \cref{thm:cartan} and naturality of the transgression, the map $H^*_\rm{sing}(\SL_n(\bb{C});\bb{R}) \to H^*_\rm{sing}(G;\bb{C})$ is then also surjective as required.

\smallskip

We now establish the claim, which requires unwinding of notation. First, let $\langle-,-\rangle$ denote the (normalised) Killing form, pick a basis $J_1,\ldots,J_d$ of $\fr{g}$, and let $J^1,\ldots,J^d$ be the basis of $\fr{g}$ satisfying $\langle J_i,J^j \rangle = \delta_{ij}$. The target of the identification $\fr{gl}_n \cong \rm{End}(\bb{C}^n)$ is an associative algebra, so $\rho$ extends to a map $\widetilde{\rho} \colon U(\fr{g}) \to \rm{End}(\bb{C}^n)$ of associative algebras. Now set 
\[\ol{\Omega} \coloneq \sum_{i=1}^d J_i \otimes J^i \in U(\fr{g}) \otimes U(\fr{g}) \quad \text{and} \quad  
F \coloneq -1(\id \otimes \widetilde{\rho})(\ol{\Omega}) = -\sum_{i=1}^d J_i \otimes \rho(J^i) \in U(\fr{g}) \otimes \rm{End}(\bb{C}^n).\]
Interpreting $U(\fr{g}) \otimes \rm{End}(\bb{C}^n)$ as $(n \times n)$-matrices with entries in $U(\fr{g})$, we see it is an associative algebra with trace $\rm{tr} \colon U(\fr{g}) \otimes \rm{End}(\bb{C}^n) \to U(\fr{g})$. The statement of \cite[Corollary 2.4]{Casimirelements} is that the elements $\rm{Tr}(F^m)$ for $m \geq 2$ generate $Z(U(\fr{g}))$ (in fact, only some are needed \cite[Table 1]{Casimirelements}).

By the final paragraph of \cref{sec:casimir} and noting that $\rm{Tr}(F^m) \in U(\fr{g})$ lies the filtration degree $m$, it suffices to prove its symbol $\sigma_m(\rm{Tr}(F^m)) \in S^m(\fr{g}^\vee)^\fr{g}$ is in the image of the map $\rho^\vee$. Write $\rho(J^i) = \sum_{1\leq k,\ell \leq n} \rho(J^i)_{k\ell} E_{k\ell}$ with $E_{k\ell} \in \rm{End}(\bb{C}^n)$ the matrix with unique nonzero entry given by $1$ in the $(k,\ell)$ entry and $\rho(J^i)_{k\ell} \in \bb{C}$, so that
\[F = -\sum_{1 \leq k,\ell \leq n}\left(\sum_{i=1}^d J_i \rho(J^i)_{k\ell}\right) \otimes E_{k\ell} \in \fr{g} \otimes \rm{End}(\bb{C}) \subset U(\fr{g}) \otimes \rm{End}(\bb{C}).\]
Under the identification $\fr{g} \cong \fr{g}^\vee$ using the Killing form, $J_i$ is sent to $\langle J_i,-\rangle \in \fr{g}^\vee$. Because the functional $X \mapsto \smash{\sum_{i=1}^d \langle J_i,X\rangle \, \rho(J^i)_{k\ell}}$ sends $J^j$ to $\rho(J^j)_{k\ell}$, it is equal to the functional $X \mapsto \rho(X)_{k\ell}$ extracting the $(k,\ell)$ matrix entry of the representation. In other words, $(\sigma_1 \otimes \id)(F) = -(\rho^\vee \otimes \id)(E)$ with $E = \sum_{1 \leq k,\ell \leq n} e_{k,\ell} \otimes E_{k,\ell} \in \fr{gl}_n^\vee \otimes \rm{End}(\bb{C}^n)$, where $e_{k,\ell} \in \fr{gl}_n^\vee$ is the linear functional extracting the $(k,\ell)$ matrix entry. We now compute
\begin{align*}\sigma_m(\rm{tr}(F^m)) &= \rm{tr}((\sigma_m \otimes \id)(F)^m) \\
&= (-1)^m\rm{tr}((\rho^\vee \otimes \id)(E)^m) \\
&= (-1)^m \rho^\vee(\rm{tr}(E^m)).\end{align*}
The first equation uses that the symbol map $\sigma_m$ satisfies (i) $\sigma_m(x+y) = \sigma_m(x)+\sigma_m(y)$ if $x,y$ have filtration degree $m$ and (ii) $\sigma_{m_1+m_2}(xy) = \sigma_{m_1}(x)\sigma_{m_2}(y)$ if $x$ and $y$ have filtration degrees $m_1$ and $m_2$. The second equation uses $(\sigma_1 \otimes \id)(F) = -(\rho^\vee \otimes \id)(E)$. The third equation uses that $\rho^\vee \otimes \id$ is an algebra homomorphism and that $\rm{tr}$ intertwines $\rho^\vee$ and $\rho^\vee \otimes \id$. This completes the proof of the claim. 
\end{proof}

It remains to deal with the remaining case of the infinite family D. 

\begin{proof}[Proof of \cref{thm:monod} for complex simple Lie groups in the infinite family D] If we follow the strategy of the previous case, \cite[Corollary 2.4]{Casimirelements} explains what remains to be done: we need to prove that the Pfaffian is pulled back from some representation. This was proven in \cite[Section 2.7]{Gon99}: we can take the complex spinor representation.
\end{proof}

\section{The proof of \cref{thm:monod} for real simple Lie groups} We now deal with case (C) in \cref{sec:classification}: in the classification of noncompact real forms there are seven infinite families and twelve exceptional cases \cite[Table II]{helgason2001differential} \cite[Table 4]{VGO}. The work of Harnick and Ott deals with most cases, and the remaining ones are handled by relating the map induced by the inclusion $G \to G_\bb{C}$ on continuous cohomology to the Cartan embedding of compact dual symmetric spaces.

\subsection{The work of Harnick and Ott} Recall from \cref{sec:compact-duals} that to any connected noncompact real Lie group $G$ with maximal compact subgroup $K$, we associate a dual compact Lie group $G_\rm{u}$ and maximal compact subgroup $K_\rm{u}$. Hartnick and Ott proved that \cref{thm:monod} is true if $\rm{rk}(G_u) = \rm{rk}(K)$ \cite[Theorem 2, Proposition 2.3]{HartnickOtt}. 

They observe this hypothesis is satisfied for the hermitian Lie groups \cite[Theorem 1]{HartnickOtt}: in the notation of \cite[Table II, p.~354]{helgason2001differential}, there are the four infinite families AIII, DIII, BDI (for $q=2$) and CI, and the two exceptional cases EIII and EVII \cite[Theorem 6.1, Table II, p.~354]{helgason2001differential}. However, it also satisfied by two further infinite families CII and BDI (when not both $p$, $q$ are odd). Let us explain this: BDI has $G_\rm{u} = \rm{SO}_{p+q}$ and $K = \rm{SO}_p \times \rm{SO}_q$, with $\rm{rk}(\rm{SO}_n = \lfloor \tfrac{n}{2} \rfloor$, and CII has $G_\rm{u} = \rm{Sp}_{p+q}$ and $K = \rm{Sp}_p \times \rm{Sp}_q$, with $\rm{rk}(\rm{Sp}_n) = n$. It is also satisfied by nearly all of remaining exceptional cases: by \cite[Table II, p.~354]{helgason2001differential}, one has $\rm{rk}(G_\rm{u}) \neq \rm{rk}(K)$ only for the cases EI and EIV.

\smallskip

To summarise: it remains for us to deal in this section with the three infinite families AI, AII, BDI (for $p$, $q$ both odd) and the two remaining exceptional cases EI and EIV.

\subsection{The Cartan embedding} We now establish the main technical result needed to establish the remaining cases. Take a real simple Lie algebra $\fr{g}$ arising from a Cartan decomposition of a complex Lie algebra $\fr{g}_\bb{C}$, i.e.~a real form of $\fr{g}_\bb{C}$. Then there is an inclusion $\fr{g} \to \fr{g}_{\bb{C}}$, and suppose that we have a map $G \to G_\bb{C}$ of Lie groups inducing the given map on Lie algebras. Fixing maximal compact subgroup $K \subseteq G$ and letting $G_\rm{u} = \rm{Int}(\fr{g}_u)$ be the compact dual of $G$ with corresponding maximal compact $K_\rm{u} \subseteq G_\rm{u}$, there is a Cartan embedding
\begin{align*}\iota_\theta \colon G_\rm{u}/K_\rm{u} &\lra G_\rm{u} \\
gK &\longmapsto g\Theta_\rm{u}(g)^{-1}.\end{align*}
Here $\Theta_\rm{u}$ is the automorphism of $G_\rm{u} = \rm{Int}(\fr{g}_\rm{u})$ induced by the automorphism of $\fr{g}_\rm{u} = \fr{k} \oplus \fr{p}$ given by $\id_\fr{k} \oplus -\id_\fr{p}$.

\begin{proposition}\label{prop:cartan-embedding} There is a commutative square
\[\begin{tikzcd} H^*_\rm{cts}(G_\bb{C};\bb{R}) \rar{\rm{vE}_{G_\bb{C}}} \dar & H^*_\rm{sing}(G_\rm{u};\bb{R}) \dar{\iota_\theta^*} \\[-5pt]
H^*_\rm{cts}(G;\bb{R}) \rar{\rm{vE}_{G}} & H^*_\rm{sing}(G_\rm{u}/K_\rm{u};\bb{R}) \end{tikzcd}\]
with right vertical map induced by the Cartan embedding.
\end{proposition}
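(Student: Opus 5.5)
Both horizontal maps factor through relative Lie algebra cohomology, and the left vertical map corresponds, via the naturality square for $\rm{vE}$ stated after \cref{thm:van-est}, to restriction along the pair map $(\fr{g},\fr{k}) \to (\fr{g}_\bb{C},\fr{u})$. Here I choose the compact real form of $\fr{g}_\bb{C}$ to be $\fr{u} = \fr{k} \oplus i\fr{p}$, which is exactly $\fr{g}_\rm{u}$ sitting inside $\fr{g}_\bb{C}$; it contains $\fr{k}$, so the maximal compact subgroups are compatible. The statement then reduces to a commutative square of relative Lie algebra cohomologies followed by the identifications of \cref{prop:van-est-symmetric} on the bottom and of \cref{prop:van-est-complex} on the top:
\[ H^*(\fr{g}_\bb{C},\fr{u}) \to H^*(\fr{g}_\rm{u}\oplus \fr{g}_\rm{u}, \Delta\fr{g}_\rm{u}) \to H^*_\rm{sing}(G_\rm{u};\bb{R}), \qquad H^*(\fr{g},\fr{k}) \to H^*(\fr{g}_\rm{u},\fr{k}) \to H^*_\rm{sing}(G_\rm{u}/K_\rm{u};\bb{R}). \]

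The key step is to realise the Cartan embedding at the level of pairs of compact Lie algebras. Under the map $\rho$ from the proof of \cref{prop:van-est-complex}, applying the unitary trick to $\fr{g}_\bb{C} = \fr{u}\oplus i\fr{u}$ gives $\fr{u}\oplus\fr{u}$, with $\fr{u}$ embedded diagonally. Applying it to $\fr{g} = \fr{k}\oplus\fr{p}$ gives $\fr{g}_\rm{u} = \fr{k}\oplus i\fr{p}$. The inclusion $\fr{g}\subseteq \fr{g}_\bb{C}$ sends $\fr{k}\subseteq\fr{u}$ and $\fr{p} \subseteq i\fr{u}$. Tracing through the identity-on-vector-spaces maps $\upsilon$, the induced map on the dual side is
\[ \fr{g}_\rm{u} = \fr{k}\oplus i\fr{p} \to \fr{u}\oplus\fr{u}, \qquad x + iy \mapsto (x+iy,\, x-iy) = (z, \theta_\rm{u}(z)), \]
which is the graph of the Cartan involution $\theta_\rm{u}$. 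This is a Lie algebra map sending $\fr{k}$ into the diagonal. Since the compatibility square for $\upsilon$ is natural, as noted after \cref{prop:van-est-symmetric}, only this identification has to be checked, and it is a direct computation with the formula for $\rho$.

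It then remains to compare with geometry. The map $(\id,\Theta_\rm{u}) \colon G_\rm{u} \to G_\rm{u}\times G_\rm{u}$ sends $K_\rm{u}$ into the diagonal. It therefore induces a map $G_\rm{u}/K_\rm{u} \to (G_\rm{u}\times G_\rm{u})/\Delta G_\rm{u}$, and composing with the identification $(u,v)\mapsto uv^{-1}$ from \cref{prop:van-est-complex} gives $gK_\rm{u} \mapsto g\Theta_\rm{u}(g)^{-1} = \iota_\theta(gK_\rm{u})$. The isomorphism $\mu$ of Chevalley--Eilenberg, which sends invariant cochains to invariant forms, is natural for equivariant maps of homogeneous spaces induced by Lie group maps of pairs. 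Hence the square commutes on the level of forms.

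One subtlety concerns groups: $\fr{u}\oplus\fr{u}$ integrates to $\rm{Int}(\fr{u})\times\rm{Int}(\fr{u})$, while \cref{prop:van-est-complex} further identifies the cohomology of $\rm{Int}(\fr{u})$ with that of $K_{G_\bb{C}}$. Here $\rm{Int}(\fr{u}) = G_\rm{u}$, since $\fr{u}=\fr{g}_\rm{u}$, so no covering issue arises in the right-hand column.

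I expect the main obstacle to be bookkeeping rather than mathematics. One must check that the identification $(\fr{g}_\bb{C})_\rm{u} \cong \fr{u}\oplus\fr{u}$ used in \cref{prop:van-est-complex} restricts precisely to the graph of $\theta_\rm{u}$, not to its inverse or a twist. This is the sign in $\rho(x,iu)=(x+u,x-u)$. A wrong sign would replace $\iota_\theta$ by $gK\mapsto \Theta_\rm{u}(g)g^{-1}$. That map differs by inversion composed with $\Theta_\rm{u}$, and it would still need to be matched, possibly using that inner automorphisms act trivially and that inversion acts by $-1$ on primitives. I would handle this by fixing conventions so that the computation literally yields $g\Theta_\rm{u}(g)^{-1}$.
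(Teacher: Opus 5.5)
Your proposal is correct and follows the paper's proof essentially step for step. You use naturality of van Est for the compatible choice $\fr{u} = \fr{k}\oplus i\fr{p}$, then identify the composite $\fr{g}_\rm{u} \to (\fr{g}_\bb{C})_\rm{u} \xrightarrow{\rho} \fr{u}\oplus\fr{u}$ as the graph $z \mapsto (z,\theta_\rm{u}(z))$. You integrate this to $g\mapsto (g,\Theta_\rm{u}(g))$ and recover $\iota_\theta$ via $(u,v)\mapsto uv^{-1}$.

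Your explicit attention to the sign convention hidden in $\rho$ (graph of $\theta_\rm{u}$ versus its swapped version) is a point the paper passes over silently. It is only bookkeeping and does not change the argument.
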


\begin{proof}We may assume that $G = \rm{Int}(\fr{g})$ and $G_\bb{C} = \rm{Int}(\fr{g}_\bb{C})$. A compact real form $\fr{u} \subseteq \fr{g}_\bb{C}$ determines a Cartan decomposition of $\fr{g}$ as $\fr{k} = \fr{g} \cap \fr{u}$ and $\fr{p} = \fr{g} \cap i \fr{u}$ \cite[III.7.4]{helgason2001differential} (note that with this choice, $\fr{g}_\rm{u}$ has a preferred identification with $\fr{u}$). Doing so, the left square of the below diagram commutes by naturality of the van Est isomorphism
\[\begin{tikzcd} H^*_\rm{cts}(G_\bb{C};\bb{R}) \rar{\rm{vE}_{G_\bb{C}}}[swap]{\cong} \dar & H^*(\fr{g}_\bb{C},\fr{u}) \rar[swap]{\cong} \dar & H^*_\rm{sing}(G_\rm{u};\bb{R}) \dar \\[-5pt]
H^*_\rm{cts}(G;\bb{R}) \rar{\rm{vE}_{G}}[swap]{\cong} & H^*(\fr{g},\fr{k}) \rar[swap]{\cong} & H^*_\rm{sing}(G_\rm{u}/K_\rm{u};\bb{R}).\end{tikzcd}\]
To see that the right square involving \cref{prop:van-est-symmetric} commutes, we expand these isomorphisms to
\[\begin{tikzcd} H^*(\fr{g}_\bb{C},\fr{u}) \rar{\upsilon_{\fr{g}_\bb{C},\fr{g}}}[swap]{\cong} \dar & H^*(\fr{g}_{\bb{C},\rm{u}},\fr{u}) \dar & H^*(\fr{u} \oplus \fr{u},\fr{u}) \rar{\mu_{\fr{u}\oplus \fr{u},\fr{u}}}[swap]{\cong} \lar{\cong}[swap]{\rho^*} \dar & H^*_\rm{sing}(G_\rm{u};\bb{R}) \dar{\iota_\theta^*} \\[-5pt]
H^*(\fr{g},\fr{k}) \rar{\upsilon_{\fr{g},\fr{k}}}[swap]{\cong} & H^*(\fr{g}_\rm{u},\fr{k}) \rar[equal] & H^*(\fr{g}_\rm{u},\fr{k}) \rar{\mu_{\fr{g},\fr{k}}}[swap]{\cong} & H^*_\rm{sing}(G_\rm{u}/K_\rm{u};\bb{R}).\end{tikzcd}\]
Here the left square commutes by naturality of the construction of the compact dual Lie algebra. The middle isomorphism on top is induced by the isomorphism $\rho$ from the proof of \cref{prop:van-est-complex} and the middle square commutes if we let $\fr{g}_u \to \fr{u} \oplus \fr{u}$ be given by $\fr{g}_u = \fr{k} \oplus \fr{p} \ni (k,p) \mapsto (k+p,k-p) \in \fr{u} \oplus \fr{u}$. This is induced by the homomorphism $G_\rm{u} \to G_\rm{u} \times G_\rm{u}$ given by $g \mapsto (g,\Theta_\rm{u}(g))$, which induces a map $G_\rm{u}/K_\rm{u} \to (G_\rm{u} \times G_\rm{u})/G_\rm{u}$. Finally, we can identify $(G_\rm{u} \times G_\rm{u})/G_\rm{u} \cong G_\rm{u}$ via $(u,v) \mapsto uv^{-1}$ in the top right corner, and hence the right square commutes if the rightmost vertical map is induced by the Cartan embedding.
\end{proof}

To use this, consider the composition
\[G_\rm{u} \lra G_\rm{u}/K \lra G_\rm{u}\]
sending $g$ to $g\Theta_\rm{u}(g)^{-1}$. On a primitive elements $x$ in $H^*(G_\rm{u};\bb{R})$ this is given by $x \mapsto x-\Theta_\rm{u}^*(x)$. Since $\Theta_\rm{u}$ is an involution, the rightmost term is given by $\Theta_\rm{u}^*(x) = \pm x$ so we must have $\iota^*_\theta(x) = 0$ or $2x$. This makes it well-suited for computations, since we can often determine the action of $\Theta_\rm{u}$ on cohomology.

\subsection{The proof of \cref{thm:monod} for the remaining real simple Lie algebras} We will now use the results of the previous subsection to deal with the remaining three infinite families AI, AII, BDI (for $p$, $q$ both odd) and remaining two exceptional cases EI and EIV.

\subsubsection{The infinite family AI} We first record some information about this family. This is the infinite family of real simple Lie algebras given by $\fr{sl}_n$. There is an associated family of noncompact connected real Lie groups with finite centre given by $\SL_n(\bb{R})$. The group $\SL_n(\bb{R})$ has a standard maximal compact subgroup $\rm{SO}_n \subseteq \SL_n(\bb{R})$ and a compact dual group $\rm{SU}_n$, with Cartan involution $A \mapsto \ol{A}$ \cite[p.~348]{helgason2001differential}. Thus the compact dual symmetric space is $\rm{SU}_n/\rm{SO}_n$, and  using \cref{prop:van-est-symmetric} we compute that
\[H^*_\rm{cts}(\SL_n(\bb{R});\bb{R}) \cong H^*_\rm{sing}(\rm{SU}_n/\rm{SO}_n;\bb{R}) \cong \begin{cases} \Lambda^* \langle \ol{c}_{5},\ldots,\ol{c}_{4m-3} \rangle & \text{if $n=2m-1$,}\\
\Lambda^* \langle \ol{c}_{5},\ldots,\ol{c}_{4m-3},e_{2m} \rangle & \text{if $n=2m$,}\end{cases}\] 
Here the class $e_{2n}$ in degree $2n$ is the pullback of the Euler class $e \in H^{2n}_\rm{sing}(\rm{BSO}_n;\bb{R})$ along the map $\rm{SU}_n/\rm{SO}_n \to \rm{BSO}_n$ and the classes $\ol{c}_{4i-3}$ in degree $4i-3$ are the unique elements whose pullback to $\rm{SU}_n$ along the quotient map are the unique primitive elements whose transgression is the $(2i-1)$st Chern class $c_{2i-1} \in H^{4i-2}(\rm{BSU}_n;\bb{R})$ up to decomposables. (Since the Chern class $c_i$ has degree $2i$, there is a regrettable tension between the notation $\ol{c}_{4i-3}$ and $c_{2i-1}$.)

\begin{proof}[Proof of \cref{thm:monod} for the infinite family AI] We now proceed with the proof. Firstly, the continuous cohomology corresponding class $e_{2n}$ is in the image of the geometric map \eqref{eqn:geometric-map} so represented by a bounded continuous cocycle by \cite[Propositions 2.1, 2.2]{HartnickOtt} (see also \cite[Section 3.1.3]{Karlsson}); an explicit construction appears in \cite{IvanovTuraev}.

Using that we know \cref{thm:monod} is true for $\SL_n(\bb{C})$ and there is a commutative diagram
\[\begin{tikzcd} H^*_{\rm{bd,cts}}(\SL_n(\C);\bb{R}) \rar \dar & H^*_\rm{cts}(\SL_n(\C);\bb{R}) \dar \\[-5pt]
H^*_{\rm{bd,cts}}(\SL_n(\R);\bb{R}) \rar & H^*_\rm{cts}(\SL_n(\R);\bb{R}), \end{tikzcd}\]
it suffices to prove that the generators $\ol{c}_{4i-3}$ are in the image of the right vertical map (as the notation suggests). To do so, we apply \cref{prop:cartan-embedding} to the Cartan embedding given by
\begin{align*} \iota_\theta \colon \rm{SU}_n/\rm{SO}_n &\lra \rm{SU}_n \\
A &\longmapsto A\ol{A}^{-1}\end{align*}
and note it suffices to compute the effect of the composition $\rm{SU}_n \to \rm{SU}_n/\rm{SO}_n \to \rm{SU}_n$ of the quotient map and the Cartan embedding on the generators $\ol{c}_{2i-1} \in H^*_\rm{sing}(\rm{SU}_n;\bb{R})$. These are primitive, so they get mapped to $\ol{c}_{2i-1} -\Theta_\rm{u}^*(\ol{c}_{2i-1})$ and it suffices to determine whether $\Theta_\rm{u}$ acts on $\ol{c}_{4i-3}$ by $+1$ or $-1$; in the former case the composition is nontrivial on $\ol{c}_{4i-3}$ and hence so is the Cartan embedding.

To determine this, we use the action of the involution $\Theta_\rm{u}$ on fibre sequence $\rm{SU}_n \to \ast \to \rm{BSU}_n$ and the fact that $\ol{c}_{2i-1}$ transgresses to the Chern class $c_i \in H^{2i}(\rm{BSU}_n;\bb{R})$ modulo decomposables. Complex conjugation $\Theta_\rm{u}$ extends to $\rm{BU}_n$ and as it acts by the first Chern class by $c_1 \mapsto -c_1$, the splitting principle tells us it acts on the higher Chern classes by $c_i \mapsto (-1)^i c_i$, and hence by naturality of transgression must act by $\ol{c}_{2i-1} \mapsto (-1)^i \ol{c}_{2i-1}$. In particular it acts by $-1$ on $\ol{c}_{4i-3}$.\end{proof}

\subsubsection{The infinite family AII} This is the infinite family of real simple Lie algebras given by $\fr{su}^*(2n)$, with associated family of noncompact connected real Lie groups with finite centre given by
\[\rm{SU}^*(2n) \coloneq \{A \in \SL_{2n}(\bb{C}) \mid AJ = J \ol{A}\} \qquad \text{with $J = \begin{bsmallmatrix} 0 & \id_n \\ -\id_n & 0 \end{bsmallmatrix}$.}\] 
It has a standard maximal compact subgroup $\rm{Sp}_n$ and compact dual group $\rm{SU}_{2n}$ with Cartan involution given by $A \mapsto J\ol{A}J^{-1}$ \cite[p.~348]{helgason2001differential}. Thus the compact dual symmetric space is $\rm{SU}_{2n}/\rm{Sp}_n$, and we compute that 
\[H^*_\rm{sing}(\rm{SU}_{2n}/\rm{Sp}_n;\bb{R}) \cong \Lambda^* \langle \ol{c}_{5},\ldots,\ol{c}_{4n-3} \rangle\]
where $\ol{c}_{4i-3}$ has degree $4i-3$.

\begin{proof}[Proof of \cref{thm:monod} for the infinite family AII] One argues as in the case AI and observes that $\Theta_\rm{u}$ acts on the first Chern class by $c_1 \mapsto -c_1$ as conjugation $J \in \rm{U}_{2n}$ acts trivially on cohomology of $\rm{BU}_{2n}$. We leave further details to the reader. \end{proof}

\subsubsection{The infinite family BDI (for $p,q$ both odd)} This is the infinite family of real simple Lie algebras given by $\fr{so}_{p,q}$ with associated family of noncompact connected real Lie groups with finite centre given by the identity component $\rm{SO}_{p,q}^0$ of
\[\rm{SO}_{p,q} = \{A \in \SL_{p+q}(\bb{R}) \mid I_{p,q}A = AI_{p,q}\} \qquad \text{with $I_{p,q} = \begin{bsmallmatrix} -\id_p & 0 \\ 0 & \id_q \end{bsmallmatrix}$.}\]
It has a standard maximal compact group $\rm{SO}_p \times \rm{SO}_q$ and compact dual group $\rm{SO}_{p+q}$ with Cartan involution given by $A \mapsto I_{p,q}AI_{p,q}^{-1}$ \cite[p.~349]{helgason2001differential}. Thus the compact dual symmetric space is $\rm{SO}_{p+q}/(\rm{SO}_p \times \rm{SO}_q)$ and we compute that \cite[p.~320]{Takeuchi}
\[H^*_\rm{sing}(\rm{SO}_{p+q}/(\rm{SO}_p \times \rm{SO}_q);\bb{Q}) \cong \frac{\bb{Q}[p_1,\ldots,p_a,p'_1,\ldots,p'_b]}{(p(\gamma)p(\gamma')-1)} \otimes \Lambda^* \langle \ol{e}_{p+q-1} \rangle.\]
Here $p=2a+1$ and $q=2b+1$, $\gamma$ and $\gamma'$ are the canonical $p$- and $q$-dimensional vector bundles induced by the map $\rm{SO}_{p+q}/(\rm{SO}_p \times \rm{SO}_q) \to \rm{BSO}_p \times \rm{BSO}_q$, $p$ is the total Pontryagin class, the degree of $p_i$ is $4i$, the degree of $p'_j$ is $4j$. Finally, $\ol{e}_{p+q-1}$ has degree $p+q-1$ and it is the unique class whose pullback to $H^{p+q-1}(\rm{SO}(p+q);\bb{R})$ is the same-named unique primitive class whose transgression is the Euler class $e_{p+q} \in H^{p+q}(\rm{BSO}_{p+q};\bb{R})$ up to decomposables.

\begin{proof}[Proof of \cref{thm:monod} for the infinite family BDI (for $p$, $q$ both odd)] The $p_i$ and $p'_j$ are in the image of the geometric map \eqref{eqn:geometric-map} so represented by a bounded continuous cocycle by \cite[Propositions 2.1, 2.2]{HartnickOtt}. Since \cref{thm:monod} is true for $\rm{SO}_{p,q}(\bb{C})$, it suffices by naturality to verify that $\ol{e}_{p+q-1}$ is in the image of the map induced on continuous cohomology by the inclusion $\rm{SO}_{p,q} \to \rm{SO}_{p+q}(\bb{C})$ into the complexification. Arguing as in the case of AI, it suffices to prove that the Cartan involution $\Theta_\rm{u}$ acts by $-1$ on $\ol{e}_{p+q-1}$. By naturality of transgression, this follows because conjugation by $I_{p,q}$ acts by $-1$  on the Euler class $e_{p,q} \in H^{p+q}(\rm{BSO}_{p+q};\bb{R})$ since $p$ is odd and thus $I_{p,q}$ reverses orientation.\end{proof}

\subsubsection{The exceptional case EI} This is given by the real simple Lie algebra $\fr{e}_{6(-6)}$ (the split form of $\fr{e}_6$) and has an associated noncompact real Lie group with finite centre $E_{6(-6)}$. It is given by determinant-preserving automorphisms of the exceptional split Jordan algebra $J_3(\bb{C}')$ \cite[Section 3.15]{Yokota}. It has a maximal compact subgroup $\rm{PSp}_4$ and compact dual group $E_6$. The associated compact dual symmetric space $EI = E_6/\rm{PSp}_4$, and satisfies \cite[p.~322]{Takeuchi} \cite[Theorem 2]{Ishitoya}
\[H^*(EI;\bb{R}) \cong \Lambda^* \langle x_9,x_{17} \rangle \otimes \bb{R}[u_8]/(u_8^3)\]
with classes in the degrees as indicated by the subscripts.

\begin{proof}[Proof of \cref{thm:monod} for the exceptional case EI]
By the cited references, $u_8$ is a pullback of a class in $H^8(\rm{BPSp}_4;\bb{R})$, hence in the image of the geometric map \eqref{eqn:geometric-map} so represented by a bounded continuous cocycle by \cite[Propositions 2.1, 2.2]{HartnickOtt}. Arguing as in the case AI, it suffices to prove that the Cartan involution $\Theta_\rm{u}$ acts on the primitive elements $x_9$ and $x_{17}$ by $-1$. These classes transgress to generators $y_{10}$ and $y_{18}$ in degree $10$ and $18$ in $H^*(\rm{BE}_6;\bb{R})$. Tensoring with $\bb{C}$, the latter can be computed as $S^*(\fr{g}^\vee[2])^{W(E_6)}$. The Cartan involution $\Theta_\rm{u}$ must be a nontrivial outer automorphism \cite[IX.3.1]{helgason2001differential}, and by \cite[p.~220]{Bourbaki} the Weyl group and a nontrivial outer automorphism generate the same group as the Weyl group and $-\id_{\fr{g}}$. Thus $\Theta_\rm{u}$ acts by $(-1)^i$ in degree $2i$, so by $-1$ on $y_{10}$ and $y_{18}$ and by naturality of the transgression by $-1$ on $x_9$ and $x_{17}$.\end{proof}

\subsubsection{The exceptional case EIV} This is the real simple Lie algebra $\fr{e}_{6(-26)}$, with an associated noncompact connected real Lie group with finite centre $E_{6(-26)}$. It is given by determinant-preserving automorphisms of the exceptional Jordan algebra $J_3(\bb{C})$ \cite[Section 3.15]{Yokota}. It has a maximal compact subgroup $F_4$ and compact dual group $E_6$. The associated compact dual symmetric space, also denoted EIV and given by $E_6/F_4$, and satisfies \cite[Proposition 2.5]{Araki} \cite[p.~322]{Takeuchi}
\[H^*(EIV;\bb{R}) \cong \Lambda^* \langle x_9,x_{17} \rangle,\]
with classes in the degrees as indicated by the subscripts. 

\begin{proof}[Proof of \cref{thm:monod} for the exceptional case EIV] One argues exactly as for EI.\end{proof}

\appendix

\section{Goncharov's extension criterion} \label{sec:extension} Our goal is to explain in full detail the proof of an extension criterion originally due to Goncharov extending a construction of Suslin, making some minor corrections to the statements and arguments. For simplicity we will work with rational coefficients in this section (any field of characteristic zero will do) and suppress this from the notation for brevity where possible.

\medskip

For a finite-dimensional vector space $V$ over $F$, consider the chain complex $\widetilde{C}_*(V)$ with entries given by the free $\bb{Q}$-vector space
\[\widetilde{C}_p(V) \coloneq \bb{Q}[(v_0,\compactldots,v_p) \in V^{p+1} \mid v_0,\ldots,v_p \text{ are in general position}]\]
on those ordered $(p+1)$-tuples such that any $k$-tuple for $k \leq \dim(V)$ is linearly independent, and differential given by $d(v_0,\compactldots,v_p) = \sum_{i=0}^p (-1)^i (v_0,\compactldots,\widehat{v}_i,\ldots,v_p)$. This is quasi-isomorphic to $\bb{Q}$ by the standard augmentation $(v_0) \mapsto 1$ and has a natural action of $\GL(V)$, so there are maps unique up to chain homotopy
\begin{equation}\label{eqn:iota-v}\iota_V \colon C_*(\GL(V)) \overset{\simeq}\lra (\widetilde{C}_*(V))_{h\GL(V)} \lra  (\widetilde{C}_*(V))_{\GL(V)} \eqcolon C_*(V)\end{equation}
where the left map is a quasi-isomorphism produced by standard techniques in homological algebra and the right map takes homotopy coinvariants to strict coinvariants. We will refer to the chain complex $C_*(V)$ of coinvariants as the \emph{complex of configurations}, and we will denote the image $(v_0,\compactldots,v_p) \in \smash{\widetilde{C}_*(V)}$ as $(\!( v_0,\compactldots,v_p )\!) \in C_*(V)$. Observe that if $V \cong W$ then $C_*(V)$ is canonically isomorphic to $C_*(W)$, so we may as well consider $C_*(n) \coloneq C_*(F^n)$. We will later see, in \cref{cor:formulas-chain-to-bigrassmannian}, that we may choose the map $\iota_V$ to be given by 
\begin{align*}F_0 \colon C_*(\GL(V)) & \lra C_*(V) \\
[g_1|\compactcdots|g_p] &\longmapsto (\!(v,g_1 \cdot v,\compactldots,g_1 \compactcdots g_p \cdot v)\!)\end{align*} 
for a fixed nonzero vector $v$, on those $p$-tuples of group elements so that the vectors on the right are in general position; we will not need to know its values on other $p$-tuples.

\smallskip

Fix a $\bb{Q}$-vector space $\ds{k}$. A linear map $\sf{h} \colon C_p(n) \to \ds{k}$ defines a map of chain complexes $C_*(n) \to \ds{k}[p]$ when it satisfies the cocycle equation $\sf{h} \circ d = 0$, and by precomposition with $\iota_V$ defines a cohomology class $h = [\sf{h} \circ \iota_{F^n}] \in H^p(\GL_n(F);\ds{k})$. In this section we explain a condition under which we can extend this to a \emph{stable} class: there exist cohomology classes $h^{(m)} \in H^p(\GL_{m+n}(F);\ds{k})$ for $m \geq 0$ so that the restriction map satisfies
\begin{align*}H^p(\GL_{m+n}(F);\ds{k}) &\lra H^p(\GL_{n}(F);\ds{k}) \\
h^{(m)} &\longmapsto h^{(0)} = h.\end{align*} 
We expect that $h^{(m)}$ restricts to $h^{(m-1)}$ and hence all assemble to a class in $H^p(\GL_\infty(F);\ds{k})$, but do not prove this. Moreover, we give an explicit cocycle representative, at least on suitably generic $p$-tuples of elements in $\GL_{m+n}(F)$.

\medskip

We write $\langle - \rangle$ for linear span and $(\!(\langle v_i \rangle \mid v_0,\compactldots,\widehat{v}_i,\compactldots,v_p)\!)$ for the element of $C_{p-1}(n-1)$ given by $(\!(\pi_i(v_0),\compactldots,\pi_i(\widehat{v}_i),\compactldots,\pi_i(v_p))\!) \in \smash{\widetilde{C}_{p-1}(V/\langle v_i \rangle))}$ with $\pi_i \colon V \to V/\langle v_i \rangle$ the projection. In terms of this we define a second ``dual'' differential on $C_*(n)$ by
\[d^\vee(\!(v_0,\ldots,v_p)\!) = \sum_{i=0}^p (-1)^i (\!( \langle v_i \rangle \mid v_0,\compactldots,\widehat{v}_i,\compactldots,v_p)\!).\]

\begin{definition}A linear map $\sf{h} \colon C_*(n) \to \ds{k}[p]$ is a \emph{bi-Grassmannian} cocycle if it satisfies $\sf{h} \circ d = 0$ and $\sf{h} \circ d^\vee = 0$.\end{definition}

That is, it is a map that satisfies a cocycle equation as well as a ``dual cocycle equation.'' The following is essentially due to Goncharov \cite[Section 2.6]{Gon95b} \cite[Section 4]{Gon16}, generalising work of Suslin \cite[\S 3]{Sus90}. It uses the notion of a \emph{partial $k$-frame}, which is an ordered $k$-tuple $v^\bullet = (v^1,\ldots,v^k)$ of linearly independent vectors. We say a collection of $m$ partial $k$-frames is \emph{in general position} if the collection of all $mk$ vectors is in general position.

\begin{theorem} \label{thm:extension-criterion}
    Let $F$ be an infinite field and $\sf{h} \colon C_*(n) \to \ds{k}[p]$ be a bi-Grassmannian cocycle.
    \begin{enumerate}[\noindent (i)]
        \item The cohomology class $h \in H^p(\GL_n(F);\ds{k})$ defined by the cocycle $C_*(\GL_n(F)) \to C_*(n) \overset{\sf{h}}\to \ds{k}[p]$ is stable: for any $m \ge 1$ there exists a cohomology class $h^{(m)} \in H^p(\GL_{m+n}(F);\ds{k})$ which restricts to $h^{(0)} = h$ along the inclusion $\GL_n(F) \to \GL_{m+n}(F)$.
        \item For a fixed partial $(m+1)$-frame $v^\bullet$ in $F^{m+n}$, $h^{(m)}$ can be represented by an inhomogeneous cocycle satisfying
    \begin{align*} \sf{h}^{(m)} \colon \GL_{m+n}(F)^p &\lra \ds{k} \\
    [g_1|\compactcdots|g_p] &\longmapsto (-1)^{m}\sum_{\substack{
i_0+\compactcdots+i_p=m\\
i_0,\compactldots, i_p\ge 0
}} \sf{h}\Big((\!( V_{i_0,\ldots,i_p} \mid v_0^{i_0+1},\compactldots, v_p^{i_p+1})\!)\Big)\end{align*}
for $p$-tuples defined as $v_0^\bullet \coloneq v^\bullet$ and $v_i^\bullet \coloneq (g_1 \compactcdots g_i) \cdot v^\bullet$ for $1 \le i \le p$ and subspace $V_{i_0,\ldots,i_p} \coloneqq \langle v_0^1,\compactldots,v_0^{i_0},\compactldots,v_p^1,\compactldots,v_p^{i_p} \rangle$, as long as the $p$-tuples are in general position.
    \end{enumerate}
\end{theorem}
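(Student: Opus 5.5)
The plan is to follow Suslin's and Goncharov's strategy: build a bicomplex of configurations of partial frames, and extend the cocycle $\sf{h}$ one step at a time in the frame length. For each $m \ge 0$ and $k \ge 1$, let $\widetilde{C}^{(k)}_p(F^{m+n})$ be the $\bb{Q}$-vector space on $(p+1)$-tuples of partial $k$-frames in general position, with the simplicial differential $d$, and write $C^{(k)}_*(m+n)$ for its $\GL_{m+n}(F)$-coinvariants. Since $F$ is infinite, a general position argument shows $\widetilde{C}^{(k)}_*$ is acyclic. So, exactly as in \eqref{eqn:iota-v}, we get maps $C_*(\GL_{m+n}(F)) \to C^{(k)}_*(m+n)$, and on generic tuples these are given by $[g_1|\compactcdots|g_p] \mapsto (\!(v^\bullet, g_1 v^\bullet,\compactldots)\!)$. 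The maps that forget the last vector of a frame, $C^{(k+1)}_* \to C^{(k)}_*$, are compatible with these. We also have the map $(\!(v_0^\bullet,\ldots,v_p^\bullet)\!) \mapsto (\!(\langle v_0^1\rangle \mid \ldots)\!)$, which quotients by the first vector of one frame. It relates $(k+1)$-frames in dimension $m+n$ to $k$-frames in dimension $m+n-1$, and it is the source of the dual differential $d^\vee$.

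The key steps, in order, are as follows. (1) Define the candidate $\sf{h}^{(m)}$ on $C^{(m+1)}_p(m+n)$ by the formula in (ii), i.e.\ the signed sum over $i_0+\cdots+i_p = m$ of $\sf{h}$ applied to the configuration obtained by quotienting by $V_{i_0,\ldots,i_p}$. This quotient has dimension $n$, so each term lies in $C_p(n)$. (2) Prove $\sf{h}^{(m)} \circ d = 0$ by induction on $m$. Expanding $d$ and regrouping by which frame is deleted, the terms cancel in pairs except for a boundary contribution. That contribution is expressed through $d^\vee$ applied to configurations in dimension $n$, together with $d$ applied to configurations whose indices sum to $m-1$. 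The first vanishes because $\sf{h}\circ d^\vee = 0$. The second is handled by the induction hypothesis, or directly by $\sf{h} \circ d = 0$. The base case $m=0$ is just $\sf{h}$ itself. (3) Pulling back $\sf{h}^{(m)}$ along $C_*(\GL_{m+n}(F)) \to C^{(m+1)}_*(m+n)$ gives a class $h^{(m)}$, and on generic tuples its cocycle is the displayed formula, which proves (ii). On nongeneric tuples we only need existence, which follows from the quasi-isomorphism and the usual trick of generic translates.

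(4) For restriction, embed $\GL_n(F) \subseteq \GL_{m+n}(F)$ and choose the frame $v^\bullet = (e_{n+1},\ldots,e_{n+m},w)$, where the first $m$ vectors span a complement fixed by $\GL_n(F)$ and $w$ is generic. In the sum, only the terms in which the quotient kills exactly $\langle e_{n+1},\ldots,e_{n+m}\rangle$ survive as honest general-position configurations. The remaining terms are degenerate; equivalently, the restricted cocycle differs from $\sf{h}\circ F_0$ by a coboundary. The sign $(-1)^m$ is chosen so that the surviving term is $+\sf{h}$. To handle the degeneracy cleanly, I would instead compare via a chain homotopy between the two maps $C_*(\GL_n(F)) \to C^{(m+1)}_*(m+n)$: one for a generic frame and one for a frame adapted to the subgroup. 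Both compose to the inclusion on homology because the complexes are acyclic.

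I expect step (2) to be the main obstacle: the combinatorial identity showing that $d$ of the multi-index sum telescopes into $d^\vee$-terms, with correct signs. I would first verify it for $m=1$ by hand, writing $\sf{h}^{(1)}\circ d$ as a double sum over the deleted index $j$ and the index $i$ with $i_i = 1$. The $i=j$ terms should assemble into $\sf{h}(d^\vee(\cdot))$ and the $i \ne j$ terms into $\sf{h}(d(\cdot))$. Once the pattern is clear, the general case follows by an induction that peels off one quotient vector at a time. The general-position bookkeeping in step (4) is a secondary difficulty.
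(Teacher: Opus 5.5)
Your steps (1)--(3) are fine. They even give a shorter route to the cocycle property than the paper's passage through $\widetilde{BD}^m_*$ and the chain map $T_*$. On a $(p+2)$-tuple of $(m+1)$-frames, consider the vanishing sum $(-1)^m\sum_{|i|=m}\sf{h}\big(d(\!(V_i\mid v_0^{i_0+1},\ldots,v_{p+1}^{i_{p+1}+1})\!)\big)=0$. The terms of $\sf{h}^{(m)}\circ d$ are exactly its terms in which the deleted vector $v_j^{i_j+1}$ has $i_j=0$. The complementary terms ($i_j\ge 1$), reindexed by $i'=i-\varepsilon_j$, form $(-1)^m\sum_{|i'|=m-1}\sf{h}\big(d^\vee(\!(V_{i'}\mid\ldots)\!)\big)=0$, with $(\!(V_{i'}\mid\ldots)\!)\in C_{p+1}(n+1)$.

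No induction is needed, and your sketch swaps the roles: $d$ acts on index sum $m$ in dimension $n$, and $d^\vee$ on index sum $m-1$ in dimension $n+1$. But this only produces \emph{some} class $h^{(m)}$ with the formula of (ii). The content of (i) is that this class restricts to $h$. That is your step (4), and it does not work as proposed, so the real difficulty lies there rather than in step (2).

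\textbf{Why the adapted frame fails.} With $v^\bullet=(e_{n+1},\ldots,e_{n+m},w)$, all translates $g\cdot v^\bullet$ for $g\in\GL_n(F)$ share their first $m$ vectors. So no tuple is $v^\bullet$-generic, and these configurations are not in the complex on which $\sf{h}^{(m)}$ is defined. Even formally, no term ``survives''. For $i=m\varepsilon_t$, the other frames contribute $v_s^1=e_{n+1}\in V_i$, which dies in the quotient. The configuration $(\!(\langle e_{n+1},\ldots,e_{n+m}\rangle\mid w,g_1w,\ldots)\!)$ would require $i_t=m$ for every $t$, so it never occurs. All other $V_i$ have dimension $<m$.

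\textbf{Why another frame does not help.} If $m\ge 2$ and $p\ge m+n-1$, write $v^1=a+b$ with $a\in F^n$ and $b$ in the complement. The $m+n$ vectors $(g_1\cdots g_t)v^1=(g_1\cdots g_t)a+b$, for $0\le t\le m+n-1$, lie in the $(n+1)$-dimensional space $F^n\oplus\langle b\rangle$. Hence no $\GL_n(F)$-tuple is generic for any frame. Your fallback, a chain homotopy between a generic-frame map and an adapted-frame map $C_*(\GL_n(F))\to C^{(m+1)}_*(m+n)$, fails for the same reason: the adapted frame defines no map into that complex.

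\textbf{What is missing.} You need an enlarged augmented acyclic complex with four properties:
\begin{enumerate}
\item it receives the frame complex;
\item it contains degenerate elements that record the shared vectors once, such as $(e_1,\ldots,e_m\mid v_0,\ldots,v_p)$;
\item it receives a $\GL_n(F)$-equivariant augmented chain map from $\widetilde{C}_*(F^n)$;
\item the cocycle extends to it and pulls back to $\pm\sf{h}$.
\end{enumerate}
In the paper this is the bicomplex $\widetilde{D}'^m_{*,*}$, with rows $\delta^{(k)}$, vertical maps $\lambda^{(k)}$ and a modified bottom row, together with $\phi^m_n$ and $T_*$. The stabilisation is $\widetilde{s}^m(v_0,\ldots,v_p)=(-1)^m\mathrm{Sym}_m(e_1,\ldots,e_m\mid v_0,\ldots,v_p)$, corrected by $E$ in degree $0$ to respect augmentations. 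Uniqueness of maps out of the bar resolution then gives $\sigma_m\circ s\simeq\sigma_0$.
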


The proof of this extension theorem is a construction, and this uses the \emph{($n$-truncated) bi-Grassmannian bicomplex}, denoted $C_{*,*}(n)$, given by
\vspace{-.4cm} \[
    \begin{tikzcd}
      &  \vdots \dar & \vdots \dar & \vdots \dar \\[-8pt] 
      \compactcdots \rar & C_{n+4}(n+2)  \rar{d} \dar{d^\vee} & C_{n+3}(n+2)  \rar{d} \dar{d^\vee} & C_{n+2}(n+2)  \dar{d^\vee} \\[-5pt]
      \compactcdots \rar & C_{n+3}(n+1)  \rar{d} \dar{d^\vee} & C_{n+2}(n+1)  \rar{d} \dar{d^\vee} & C_{n+1}(n+1)  \dar{d^\vee} \\[-5pt]
      \compactcdots \rar & C_{n+2}(n)  \rar{d} & C_{n+1}(n)  \rar{d} & C_{n}(n).
    \end{tikzcd}\]
Its totalisation is the \emph{(n-truncated) bi-Grassmannian complex}, denoted $(BC_*(n),\partial)$, given by
\[{BC}_p(n) \coloneq \bigoplus_{i=0}^{p-n} C_{p}(n+i) \quad \text{with} \quad \partial=d+d^\vee.\]
Observe that $\sf{h}$ is a bi-Grassmannian cocycle if and only if extends uniquely to a chain map $BC_*(n) \to \ds{k}[p]$. Then the construction is as follows:
\begin{enumerate}[\noindent  1.]
    \item For each $m \ge 0$ we construct a complex $BD^m_*(m+n)$ as the totalisation of a certain bicomplex as well as 
    \begin{enumerate}[(a)]
        \item explicit chain maps $\phi_n^m \colon BD^m_*(m+n) \to BC_*(n)$, and
        \item chain maps $F_m \colon C_*(\GL_{m+n}(F)) \to BD^{m}_*(m+n)$ unique up to chain homotopy. 
    \end{enumerate}
    By composing these one gets for each $m \ge 0$ a chain map unique up to chain homotopy
    \[\sigma_m \coloneq \phi_n^{m} \circ F_m \colon C_*(\GL_{m+n}(F)) \lra BC_*(n).\]
    \item We prove that the maps $\sigma_{m+n}$ are compatible with stabilisation, in the sense that for $m \geq 0$
    \begin{center}
        \begin{tikzcd}
            C_*(\GL_n(F)) \rar{\sigma_0} \dar[swap]{s} & BC_*(n) \dar[equal] \\[-5pt]
            C_*(\GL_{n+m}(F)) \rar{\sigma_m} & BC_*(n)
        \end{tikzcd}
    \end{center}
    commutes up to chain homotopy, where $s$ denotes the map induced by the stabilisation map $\GL_n(F) \hookrightarrow \GL_{n+m}(F)$ induces by the inclusion $F^n \to F^m \oplus F^n$.
    \item We find an explicit expression for $F_m \colon C_*(\GL_{m+n}(F)) \to BD^{m}_*(m+n)$ in the generic case, and hence for $\sigma_m$, by factoring over a chain complex $F^m_*(m+n)$ of partial frames. 
    \item We observe that for $m=0$, $\phi_n^0$ will be given by the map
\[\inc \circ \tau_{\geq n} \colon C_*(n) \lra C_{* \ge n}(n) \lra BC_*(n)\]
of truncation followed by the inclusion of the bottom row. The map $F_0 \colon C_*(\GL_n(F)) \to C_*(n)$ was already discussed above and thus $h^{(0)}=h$, at least on generic $p$-tuples.
\end{enumerate}

\subsection{From inhomogeneous chains to the bi-Grassmannian complex}
The goal of this subsection is to define the chain maps $\sigma_m \colon C_*(\GL_{m+n}(F)) \to BC_*(n)$ for $m \ge 0$ and their compatibility with stabilisation. This establishes steps 1 and 2 of the construction.

\subsubsection{Group homology constructions} \label{sec:standard-material-resolutions} In this subsection we recall some basic facts about group homology. We work in the $1$-category of chain complexes of left $\bb{Q}[G]$-modules, which we abbreviate to $G$-modules. For any chain complex $A_*$ of $G$-modules there is a canonical map
\[\tau_A \colon (A_*)_{hG} \lra (A_*)_G\]
from the homotopy $G$-coinvariants to the strict $G$-coinvariants. The former will be explicitly modelled by the total complex of the bicomplex $B_*(\bb{Q},\bb{Q}[G],A_*)$ of inhomogeneous chains with coefficients in $A_*$.

Suppose there is a chain map $\epsilon \colon A_* \to \bb{Q}$ inducing an isomorphism on homology, i.e.~$A_*$ is a \emph{resolution} of $\bb{Q}$ and $\epsilon$ is an \emph{augmentation} of $A_*$. A standard construction of a resolution of $\bb{Q}$ is the bar construction $B_*(\bb{Q}[G],\bb{Q}[G],\bb{Q}) \to \bb{Q}$. This consists levelwise of free $\bb{Q}[G]$-modules, so standard homological algebra says there exists a map of chain complexes
\[B_*(\bb{Q}[G],\bb{Q}[G],\bb{Q}) \lra A_*\]
covering the identity on $\bb{Q}$, unique up to chain homotopy. Taking $G$-coinvariants, we get a map of chain complexes
\[C_*(G) = B_*(\bb{Q}[G],\bb{Q}[G],\bb{Q})_G \lra (A_*)_G\]
of rational vector spaces, also unique up to chain homotopy, where the domain is the usual inhomogeneous chains computing group homology (with rational coefficients).

Let us next discuss how natural this map is. Let $\varphi \colon G \to G'$ be a homomorphism, $A_*$ be a chain complex of $G$-modules with augmentation $\epsilon \colon A_* \to \bb{Q}$, $A'_*$ be a chain complex of $G'$-modules with augmentation $\epsilon' \colon A'_* \to \bb{Q}$, and $f \colon A_* \to A'_*$ be a map of augmented chain complexes of $G$-modules (where the target is considered as such by restriction along $\varphi$). Then the following diagram commutates up to chain homotopy
\[\begin{tikzcd} C_*(G) \rar \dar[swap]{\varphi_*} & (A_*)_G \dar{f_G} \\[-5pt]
C_*(G') \rar & (A'_*)_G.\end{tikzcd}\]
To see this, note that both compositions 
\[B_*(\bb{Q}[G],\bb{Q}[G],\bb{Q}) \to B_*(\bb{Q}[G'],\bb{Q}[G'],\bb{Q}) \to A'_* \quad \text{and} \quad B_*(\bb{Q}[G],\bb{Q}[G],\bb{Q}) \to A_* \to A'_*\] 
are maps covering the identity on $\bb{Q}$ and invoke uniqueness up to chain homotopy. Now taking appropriate $G$- and $G'$-coinvariants gives the result.

\subsubsection{Construction of a new bicomplex} Our next goal is the construction of the bicomplex $\widetilde{D}_{*,*}(V)$, which involves partial shuffle and symmetrisation maps. Firstly, for $0 \le k \le p$ we define maps
\begin{align*}\delta^{(k)} \colon \widetilde{C}_p(V) &\lra \widetilde{C}_{p-1}(V) \\    
(v_0,\compactldots,v_p) &\longmapsto \sum_{i=0}^{p-k} (-1)^i (v_0,\compactldots,\widehat{v}_{k+i},\compactldots,v_p).
\end{align*}
In particular, $\delta^{(0)}$ is the usual bar differential $d$. These satisfy $\delta^{(k)} \circ \delta^{(k)}=0$ and because $F$ is an infinite field we have \cite[Lemmas 2.11, 2.12]{Gon95b}:

\begin{lemma} \label{lem acyclic}
    The complexes
\[\compactldots \lra \widetilde{C}_{k+1}(V) \xrightarrow{\delta^{(k)}} \widetilde{C}_{k}(V) \xrightarrow{\delta^{(k)}} \widetilde{C}_{k-1}(V)\]
are acyclic for any $k \ge 0$, where $\widetilde{C}_{-1}(V) \coloneq \bb{Q}$, viewed as the free $\bb{Q}$-module on the empty tuple, and $\delta^{(0)}(v_0)=1$. 
\end{lemma}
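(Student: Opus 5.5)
The plan is to prove acyclicity by exhibiting a contracting homotopy, built from the general-position hypothesis and the fact that $F$ is infinite. Fix $k \ge 0$.

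\emph{Case $k=0$.} This is the augmented standard complex of tuples in general position. Here the usual cone construction works. Let $z = \sum_j a_j (v^{(j)}_0,\ldots,v^{(j)}_p)$ be a cycle; it involves finitely many vectors. Since $F$ is infinite, a finite union of proper subvarieties cannot cover $V$. So we may choose $w \in V$ such that every tuple $(w, v^{(j)}_0,\ldots,v^{(j)}_p)$ is in general position, and also every tuple obtained by deleting entries. Define the cone operator on the span of such tuples by
\[s_w(v_0,\ldots,v_p) = (w,v_0,\ldots,v_p),\]
with $s_w(\varnothing)=(w)$ in degree $-1$. A direct check gives $\delta^{(0)} s_w + s_w \delta^{(0)} = \id$ on this span. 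Hence $z = \delta^{(0)} s_w z$, and $z$ is a boundary.

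\emph{Case $k \ge 1$.} Now $\delta^{(k)}$ only deletes entries in positions $\ge k$, so the first $k$ entries act as frozen labels. The plan is to insert the new generic vector immediately after these labels:
\[s_w(v_0,\ldots,v_p) = (-1)^k (v_0,\ldots,v_{k-1},w,v_k,\ldots,v_p).\]
In the new tuple, $w$ sits at position $k$, which is the $i=0$ term of $\delta^{(k)}$. Deleting it returns the original tuple. The terms deleting $v_{k+i}$ carry a sign shifted by one, so they cancel against $s_w\delta^{(k)}$. The sign $(-1)^k$ must be calibrated so that the identity $\delta^{(k)}s_w + s_w\delta^{(k)} = \id$ comes out correctly; I would fix it by checking small cases. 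Since $\delta^{(k)}$ never removes the first $k$ entries, the complex splits as a direct sum over the fixed prefix $(v_0,\ldots,v_{k-1})$, and the cone works summand by summand.

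The bottom of the complex needs separate care. It ends at $\widetilde C_{k-1}(V)$, on which $\delta^{(k)}$ has no terms, so the claim amounts to exactness at $\widetilde C_{k-1}$ and above. Exactness at $\widetilde C_{k-1}$ requires every $k$-tuple to be hit. This holds because $\delta^{(k)}(v_0,\ldots,v_{k-1},w) = (v_0,\ldots,v_{k-1})$ for generic $w$.

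Strictly speaking, the formula as written (with $\widetilde C_{-1}$ appearing only when $k=0$) means the stated augmentation is relevant only for $k=0$. For $k \ge 1$ I would read the complex as truncated at $\widetilde C_{k-1}$, and interpret "acyclic" as exactness everywhere, including surjectivity onto $\widetilde C_{k-1}$.

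\emph{Main obstacle.} The contracting homotopy cannot be a single global map, because $w$ depends on the finitely many vectors appearing in the chain. So the argument must be carried out cycle by cycle, choosing $w$ generic with respect to the given cycle; this is exactly where $F$ infinite enters. Formally, one writes the complex as a filtered colimit over finite sets of vectors, and on each finite stage the chosen cone gives $z = \delta^{(k)}(s_w z)$. The remaining care lies in checking that all the new tuples, and all their faces, are in general position, which is guaranteed by choosing $w$ outside finitely many hyperplanes spanned by at most $\dim V - 1$ of the vectors.
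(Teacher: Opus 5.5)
Your proof is correct and is the standard argument. The paper gives no proof of its own here: it cites Goncharov's Lemmas 2.11--2.12, whose proof is exactly this coning-off of a cycle by a vector in general position with respect to it (the paper invokes the same argument for the complex of partial frames). Your observation that $\delta^{(k)}$ splits the complex over the frozen prefix $(v_0,\ldots,v_{k-1})$ correctly reduces $k\ge 1$ to the augmented case.

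The only correction is cosmetic: no sign is needed. Because $\delta^{(k)}$ counts its signs from position $k$, the unsigned insertion $s_w(v_0,\ldots,v_p)=(v_0,\ldots,v_{k-1},w,v_k,\ldots,v_p)$ already gives $\delta^{(k)}s_w+s_w\delta^{(k)}=\id$. With your $(-1)^k$ you get $(-1)^k\id$, which is equally good for exactness.
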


Secondly, for $0 \le k \le p+1$ we define maps
\begin{align*} \rm{Sym}_k \colon \widetilde{C}_p(V) &\lra \widetilde{C}_p(V) \\
(v_0,\compactldots,v_p) &\longmapsto \frac{1}{k!} \sum_{\sigma \in S_k} (v_{\sigma(0)},\compactldots,v_{\sigma(k-1)},v_k,\compactldots,v_p)\end{align*}
by symmetrizing the first $k$ vectors with the convention that $\rm{Sym}_0=\id$, and maps
\begin{align*} \lambda^{(k)} \colon \widetilde{C}_p(V) &\lra \widetilde{C}_p(V) \\
(v_0,\compactldots,v_p) &\longmapsto \sum_{i=0}^{p-k} (-1)^i \rm{Sym}_{k+1}(v_0,\compactldots,v_{k-1},v_{k+i},v_{k},\compactldots,\widehat{v_{k+i}},\compactldots, v_p)\end{align*}
moving the later vectors to the $k$th position, with appropriate sign. These satisfy $\delta^{(k+1)} \circ \lambda^{(k)} + \lambda^{(k)} \circ \delta^{(k)}=0$ for $k \ge 0$, and $\lambda^{(k+1)} \circ \lambda^{(k)}=0$ for $k \ge 1$ \cite[Lemma 2.3, 2.14]{Gon95b}. The first equation holds before symmetrising but the second one does not. 

Now, for each $m \ge 0$ we define a $\bb{Q}[\GL(V)]$-bicomplex $\widetilde{D}^m_{*,*}(V)$ via 
\[
    \begin{tikzcd}[column sep=small, row sep=small]
        \cdots \rar & \widetilde{C}_3(V) \rar{\delta^{(0)}} \dar{\lambda^{(0)}} & \widetilde{C}_2(V) \rar{\delta^{(0)}} \dar{\lambda^{(0)}} & \widetilde{C}_1(V) \rar{\delta^{(0)}} \dar{\lambda^{(0)}} & \widetilde{C}_0(V) \dar{\lambda^{(0)}} \\
        \cdots \rar & \widetilde{C}_3(V) \rar{\delta^{(1)}} \dar{\lambda^{(1)}} & \widetilde{C}_2(V) \rar{\delta^{(1)}} \dar{\lambda^{(1)}} & \widetilde{C}_1(V) \rar{\delta^{(1)}} \dar{\lambda^{(1)}} & \widetilde{C}_0(V)  \\
        \cdots \rar & \widetilde{C}_3(V) \rar{\delta^{(2)}} \dar{\lambda^{(2)}} & \widetilde{C}_2(V) \rar{\delta^{(2)}} \dar{\lambda^{(2)}} & \widetilde{C}_1(V) \\
         \cdots \rar & \widetilde{C}_3(V) \rar{\delta^{(3)}} \dar{\lambda^{(3)}} & \widetilde{C}_2(V) \\
         & \cdots & 
    \end{tikzcd}
\]
ending in the row with horizontal differential $\delta^{(m)}$, so this bicomplex has precisely $m+1$ rows: the \emph{$k$th row} is the one where the horizontal differential is $\delta^{(k)}$, for $0 \le k \le m$. For example, the last row ends with $\tilde{C}_{m}(V)$ consisting of $(m+1)$-tuples of vectors. The bigrading is such that $\smash{\widetilde{D}^m_{-k,p}}$ is given by the term $\smash{\widetilde{C}_p(V)}$ in the $k$th row and we find it illuminating to denote its generators as $(v_0,\compactldots,v_{k-1} \vert v_k,\compactldots,v_p)$. Note that $k=0$ is allowed, yielding elements $(\vert v_0,\compactldots,v_p)$ and that $k-1 = p$ is allowed, yielding elements $(v_0,\compactldots,v_p \vert )$. 

Its totalisation is denoted as follows
\[\widetilde{BD}^m_d(V) \coloneq \bigoplus_{p-k=d} \widetilde{D}_{-k,p}^m(V)\]
with differential $\widetilde{d}=\delta^{(k)}+\lambda^{(k)}$. That is, we put $\widetilde{C}_p(V)$ in the $k$th row in degree $p-k$, and the complex is concentrated in degrees $\ge -1$, and both $\delta^{(k)}$ and $\lambda^{(k)}$ lower degree by $1$. It admits an augmentation given by projection to the top-right term $\smash{\widetilde{D}_{0,0}(V)} = \smash{\widetilde{C}_0(V)}$ and then using the standard augmentation $\widetilde{C}_0(V) \ni (\vert v) \mapsto 1 \in \bb{Q}$. This is a homology isomorphism by \cref{lem acyclic}, so we have maps unique up to chain homotopy
\begin{equation}\label{eqn:iota-m-v} \iota^m_V \colon C_*(\GL(V)) \overset{\simeq}\lra (\widetilde{BD}^m_*(V))_{h\GL(V)} \lra  (\widetilde{BD}^m_*(V))_{\GL(V)} \eqcolon BD^m_*(V)\end{equation}
where the left is a quasi-isomorphism and the right map takes homotopy coinvariants to strict coinvariants. We will denote its generators of the latter as $(\!(v_0,\compactldots,v_{k-1} \vert v_k,\compactldots,v_p)\!)$. If $V \cong V'$ then $BD^m_{*}(V)$ and $BD^m_{*}(V')$ are canonically isomorphic; we use the notation $BD^m_{*}(n) \coloneq BD^m_{*}(F^n)$.

\subsubsection{Maps to the bi-Grassmannian complex} We will first recall a construction of a map from this new complex to the bi-Grassmannian complex. To do so, we let $D^m_{*,*}(n)$ denote the coinvariants of the bicomplex $\smash{\widetilde{D}^m_{*,*}(F^n)}$. The discussion in \cite[p.~260]{Gon95b} yields the following:

\begin{lemma} \label{lem map to bi grassmannian complex}
    There is a map of bicomplexes 
    \begin{align*}\phi_n^m \colon D^m_{*,*}(m+n) &\lra C_{*,*}(n) \\
    (\!(v_0,\compactldots,v_{k-1} \vert v_k,\compactldots, v_p)\!) &\longmapsto \begin{cases}(\!(\langle v_0,\compactldots,v_{k-1}\rangle \vert v_k,\compactldots,v_p)\!) & \text{for $p \ge n+m$,} \\
    0 & \text{otherwise,}\end{cases}\end{align*} 
    where the left side is an element of $D^m_{-k,p}(n+m)$ and the right side is an element of $C_{p-k}(n+m-k)$.\footnote{The map is well-defined because if $C_p(n+m)$ is in the $k$th row then $0 \le k \le m$ so $n+m-k \ge n$. Also $p-k \ge n+m-k$ or otherwise the map is $0$.} Taking totalisations this yields a chain map
    \[\phi^m_n \colon BD^m_*(n+m) \lra BC_*(n).\]
\end{lemma}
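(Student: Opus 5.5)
The plan is to check directly that $\phi^m_n$ is well defined and commutes with both differentials. The statement about totalisations then follows formally, since the totalisation of a map of bicomplexes is a chain map. Throughout, I will read $C_{*,*}(n)$ as having the entry $C_q(N)$ only for $q\ge N$, so that in the bottom row $d$ out of $C_N(N)$ is zero.

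\emph{Well-definedness.} For generators of $\widetilde D^m_{-k,p}(F^{n+m})$ with $p\ge n+m$, set $W=\langle v_0,\ldots,v_{k-1}\rangle$. Since $k\le m<n+m$ and the $v_i$ are in general position, $\dim W=k$. I will show that the images of $v_k,\ldots,v_p$ in $F^{n+m}/W\cong F^{n+m-k}$ are in general position. Any $j\le n+m-k$ of them, together with $v_0,\ldots,v_{k-1}$, form at most $n+m$ vectors, which are linearly independent by hypothesis; hence the projections are independent. The resulting configuration is $\GL(F^{n+m})$-equivariant: $g$ carries $W$ to $gW$ and induces an isomorphism of quotients, so the class in the coinvariants $C_{p-k}(n+m-k)$ is unchanged. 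The formula only depends on $W$ as a subspace, so it is invariant under any permutation of $v_0,\ldots,v_{k-1}$. In particular $\phi^m_n\circ\mathrm{Sym}_k=\phi^m_n$, which will be used for the vertical differentials. Bidegrees match, since $C_{p-k}(n+m-k)$ sits in row $m-k$ of $C_{*,*}(n)$ in total degree $p-k$.

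\emph{Horizontal compatibility.} The map $\delta^{(k)}$ deletes $v_{k+i}$ with sign $(-1)^i$. After projecting modulo $W$, this is exactly the bar differential $d$ on the tuple $(\pi(v_k),\ldots,\pi(v_p))$, whose $i$-th entry is $\pi(v_{k+i})$. At the boundary $p=n+m$, $\delta^{(k)}$ lands in degree $p-1<n+m$, where $\phi$ is zero by definition. On the other side, $d$ applied to $C_{n+m-k}(n+m-k)$ is zero by the truncation convention, so both compositions vanish. When $p<n+m$ both sides are zero.

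\emph{Vertical compatibility.} For $k<m$, apply $\phi$ to $\lambda^{(k)}(v_0,\ldots,v_{k-1}\vert v_k,\ldots,v_p)$. Symmetrisation is invisible, so the result is
\[\sum_i(-1)^i(\!(\langle W,v_{k+i}\rangle\mid v_k,\ldots,\widehat{v}_{k+i},\ldots,v_p)\!).\]
Using $(F^{n+m}/W)/\langle\pi(v_{k+i})\rangle=F^{n+m}/\langle W,v_{k+i}\rangle$, this is term by term $d^\vee(\!(\langle W\rangle\mid v_k,\ldots,v_p)\!)$. The truncation condition $p\ge n+m$ is the same on both sides, because $\lambda^{(k)}$ preserves $p$. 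For $k=m$ there is no $\lambda^{(m)}$ in $\widetilde D^m$. Correspondingly, the target row has $N=n$, which is the bottom row of $C_{*,*}(n)$, so there is no $d^\vee$ out of it either.

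\emph{Expected obstacle.} I expect the main difficulty to be sign bookkeeping. I need to confirm that the sign conventions making $\delta^{(k)}+\lambda^{(k)}$ square to zero (namely $\delta^{(k+1)}\lambda^{(k)}+\lambda^{(k)}\delta^{(k)}=0$) match those making $d+d^\vee$ a differential on $BC_*(n)$, possibly after inserting a sign $(-1)^k$ in $\phi$ on row $k$. I also need to check that the convention $\widetilde C_{-1}=\bb{Q}$ and the top-right augmentation term never meet the region $p\ge n+m$, so that they play no role.
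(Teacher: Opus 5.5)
Your proposal is correct. It is the direct verification that the paper leaves to \cite[p.~260]{Gon95b}, and your well-definedness paragraph matches the paper's footnote. The sign issue you anticipate does not arise: you have already shown $\phi\circ\delta^{(k)}=d\circ\phi$ and $\phi\circ\lambda^{(k)}=d^\vee\circ\phi$ with no signs, and both totalisations use the plain sums $\delta^{(k)}+\lambda^{(k)}$ and $d+d^\vee$, so $\phi$ is a chain map as it stands (a factor $(-1)^k$ on row $k$ would in fact make $\phi$ anticommute with the vertical differentials).
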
 

Using this, we can now define the $\sigma_m$, completing step 1.

\begin{definition}For $m \ge 0$, we define $\sigma_m \colon C_*(\GL_{m+n}(F)) \to BC_*(n)$ as the composition
\[C_*(\GL_{m+n}(F)) \xrightarrow{\iota_{m+n}^{m}}  BD^{m}_*(m+n) \xrightarrow{\phi^m_n} BC_*(n),\]
which is well-defined up to homotopy.\end{definition}

\begin{example}When $m=0$ we have $\widetilde{BD}^0_{*}(n) = \widetilde{C}_*(n)$ with augmentation the standard one, the map $\phi^0_n \colon C_*(n) \to BC_*(n)$ is the inclusion of the truncation of the bottom row, and $\sigma_0$ is defined as $\phi^0_n \circ \iota_V$ up to chain homotopy.\end{example}

\subsubsection{Stability} We now define an iterated stabilisation map. Its definition uses the following element, described in terms of the standard basis $e_1,\compactldots,e_m$ of $F^m$:

\begin{lemma}\label{lem:e-element}The element $E \coloneq \sum_{k=0}^{m-1} (-1)^{k} \rm{Sym}_{k}(e_1,\compactldots,e_k \vert e_{k+1}) \in \oplus_{k=0}^{m-1} \widetilde{D}^{m}_{-k,k}(F^m \oplus V)$ satisfies
\[\tilde{d}(E) = (-1)^{m-1}\rm{Sym}_m(e_1,\compactldots,e_m \vert ).\] \end{lemma}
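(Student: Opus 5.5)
The plan is to compute $\tilde d(E)$ term by term and observe that the sum telescopes. Write
\[E_k \coloneq \rm{Sym}_k(e_1,\compactldots,e_k \vert e_{k+1}) \in \widetilde{D}^m_{-k,k}(F^m \oplus V), \qquad a_k \coloneq \rm{Sym}_k(e_1,\compactldots,e_k \vert ) \in \widetilde{D}^m_{-k,k-1}(F^m\oplus V).\]
Here $a_k$ is the $k$-tuple $(e_1,\compactldots,e_k)$ placed in the $k$th row, and it is symmetrised over its $k$ entries. All tuples occurring consist of distinct standard basis vectors, so they are linearly independent, hence in general position, and every term below is a legitimate generator. Since the total differential is $\tilde d = \delta^{(k)} + \lambda^{(k)}$ on the $k$th row, with no extra signs, it suffices to compute $\delta^{(k)}(E_k)$ and $\lambda^{(k)}(E_k)$ separately.

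For the horizontal part, $E_k$ is a sum of $(k+1)$-tuples, so $p=k$ and $\delta^{(k)}$ has only the summand $i=0$: it deletes the entry in position $k$, namely $e_{k+1}$. This operation only touches positions $\geq k$, so it commutes with $\rm{Sym}_k$, which permutes positions $0,\compactldots,k-1$. Hence $\delta^{(k)}(E_k)=a_k$ for $k\ge 1$. For $k=0$, $E_0=(\vert e_1)$ sits at the right end of row $0$, which is $\widetilde{C}_0$ in the bicomplex. There is no term to the right of it in $\widetilde{D}^m$, since the augmentation is not part of the complex, so this contribution is $0$. I will record this as $a_0 \coloneq 0$.

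For the vertical part, again $p-k=0$, so $\lambda^{(k)}(E_k)$ has only the summand $i=0$. This moves $v_k=e_{k+1}$ to position $k$, which is where it already sits, and then applies $\rm{Sym}_{k+1}$. Because $\rm{Sym}_{k+1}\circ \rm{Sym}_k = \rm{Sym}_{k+1}$, we get $\lambda^{(k)}(E_k) = \rm{Sym}_{k+1}(e_1,\compactldots,e_{k+1}\vert ) = a_{k+1}$ in row $k+1$. This row exists because $k \le m-1$. The bookkeeping for this step is to check that the entry $(v_0,\compactldots,v_k\vert)$ in row $k+1$ has bidegree $(-(k+1),k)$, which matches the bidegree of $a_{k+1}$.

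Summing with signs gives
\[\tilde d(E) = \sum_{k=0}^{m-1}(-1)^k\,(a_k + a_{k+1}).\]
This sum telescopes to $(-1)^0 a_0 + (-1)^{m-1}a_m = (-1)^{m-1}\rm{Sym}_m(e_1,\compactldots,e_m\vert )$, because $a_0=0$. The only point I expect to need care is the boundary behaviour: first, that $\delta^{(0)}$ on the $\widetilde{C}_0$ term of row $0$ is zero inside $\widetilde{D}^m$ rather than the augmentation; and second, that the vertical differential carries no sign in the totalisation. Both follow from the conventions just stated for $\widetilde{BD}^m_*$.
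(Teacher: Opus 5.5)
Your proof is correct and follows the paper's own argument. You compute $\delta^{(k)}$ and $\lambda^{(k)}$ on each summand $\rm{Sym}_k(e_1,\compactldots,e_k\vert e_{k+1})$, using that $\delta^{(0)}$ vanishes on the end of row $0$ and that $\rm{Sym}_{k+1}\circ\rm{Sym}_k=\rm{Sym}_{k+1}$. The alternating signs then cancel everything except $(-1)^{m-1}\rm{Sym}_m(e_1,\compactldots,e_m\vert)$; your explicit telescoping with $a_0=0$ is a cleaner way of writing the paper's ``all terms cancel'' remark.
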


\begin{proof}
To prove the claim, observe that
    \begin{align*}\delta^{(k)}(\rm{Sym}_{k}(e_1,\compactldots,e_{k} \vert e_{k+1})) &= \begin{cases} 0 & \text{if $k=0$,} \\
    \rm{Sym}_{k}(e_1,\compactldots,e_{k} \vert ) \in \widetilde{D}^m_{-k,k-1} & \text{if $ 1 \leq k \leq m-1$,}\end{cases} \\
    \lambda^{(k)}(\rm{Sym}_{k}(e_1,\compactldots,e_{k} \vert e_{k+1}))&= \rm{Sym}_{k+1}(e_1,\compactldots,e_{k},e_{k+1} \vert ) \in \widetilde{D}^m_{-k-1,k}\quad \text{if $0 \le k \le m-1$}\end{align*}
    where the latter uses that $\rm{Sym}_{k+1} \circ \rm{Sym}_{k}=\rm{Sym}_{k+1}$. Thus, all terms cancel because of the extra signs in the definition of $E$, except $\lambda^{(m-1)}((-1)^{m-1} \rm{Sym}_{m-1}(e_1,\compactldots,e_{m-1}\vert e_m)) = (-1)^{m-1}\rm{Sym}_m(e_1,\compactldots,e_m\vert)$. \end{proof}

To define the stabilisation map we have to slightly modify $\widetilde{D}^m_{*,*}(V)$: take $\widetilde{D}'^m_{*,*}(V)$ to be the bicomplex 
\[\widetilde{D}'^m_{-k,p}(V) \coloneq \begin{cases} \widetilde{D}^m_{-k,p}(V) & \text{if $k < m$,} \\
\widetilde{D}'^m_{-m,p}(V) & \text{if $k=m$},\end{cases}\]
where $\widetilde{D}'^m_{-m,p}(V)$ is the free $\bb{Q}$-vector space on $(p+1)$-tuples $(v_0,\compactldots,v_{m-1} \vert v_m,\compactldots,v_p)$ so that the $v_0,\ldots,v_{m-1}$ are in general position and the images of $v_m,\ldots,v_p$ in $V/\langle v_0,\compactldots,v_m \rangle$ are in general position. Generalising \cref{lem acyclic}, this modified bottom row is also acyclic and thus the inclusion $\widetilde{D}^m_{*,*}(V) \to \widetilde{D}'^m_{*,*}(V)$ is a quasi-isomorphism and by Section \ref{sec:standard-material-resolutions} it induces the right vertical map in  a square commuting up to chain homotopy
\[\begin{tikzcd} C_*(\GL(V)) \dar[equal] \rar & BD_*(V) \dar \\[-5pt]
C_*(\GL(V)) \rar & BD'_*(V).\end{tikzcd}\]
This modification allows us to construct the following map:

\begin{lemma}There is a map of chain complexes
\begin{align*}\widetilde{s}^m \colon \widetilde{C}_{*}(V) &\lra \widetilde{BD}'{}_{*}^m(F^m \oplus V) \\
(v_0,\compactldots,v_p) &\longmapsto \begin{cases} (-1)^m\rm{Sym}_m (e_1,\compactldots,e_m \vert v_0,\compactldots,v_p) & \text{if $p \neq 0$,} \\
(-1)^m \rm{Sym}_m(e_1,\compactldots,e_m \vert v_0) + E & \text{if $p = 0$.} \end{cases}\end{align*}\end{lemma}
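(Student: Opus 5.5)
The plan is to check directly that $\widetilde{s}^m$ is well defined, lands in the right degrees, and commutes with the differentials. Along the way I will also record that it is compatible with the augmentations and equivariant for $\GL(V)$ acting on $F^m \oplus V$ trivially on $F^m$, since this is what the stabilisation argument will need afterwards. I read the second condition in the definition of $\widetilde{D}'^m_{-m,p}$ as asking that the images of $v_m,\ldots,v_p$ in $(F^m\oplus V)/\langle v_0,\ldots,v_{m-1}\rangle$ be in general position; as printed, the span includes $v_m$, which I take to be a typo.

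\emph{Well-definedness and degrees.} For a general-position tuple $(v_0,\ldots,v_p)$ in $V$, every permutation of $(e_1,\ldots,e_m \vert v_0,\ldots,v_p)$ appearing in $\rm{Sym}_m$ has first $m$ entries $e_1,\ldots,e_m$ in some order. These are linearly independent in $F^m \oplus V$. The images of the $v_i$ in $(F^m\oplus V)/F^m \cong V$ are the $v_i$ themselves, so they are in general position. Hence all these terms lie in the modified bottom row $\widetilde{D}'^m_{-m,p+m}$, which sits in total degree $(p+m)-m=p$. The summand $\rm{Sym}_k(e_1,\ldots,e_k\vert e_{k+1})$ of $E$ lies in $\widetilde{D}^m_{-k,k}$, of total degree $0$. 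So $\widetilde{s}^m$ preserves degree.

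\emph{Commuting with differentials.} On the bottom row the total differential is just $\delta^{(m)}$, since there is no $\lambda^{(m)}$ leaving that row. The map $\delta^{(m)}$ only deletes entries in positions $\ge m$, with signs $(-1)^i$ for the entry in position $m+i$, i.e.\ for $v_i$. It therefore commutes with $\rm{Sym}_m$, which only permutes the first $m$ entries. This gives
\[\widetilde{d}\,\rm{Sym}_m(e_1,\ldots,e_m\vert v_0,\ldots,v_p)=\rm{Sym}_m(e_1,\ldots,e_m\vert d(v_0,\ldots,v_p)).\]
I then split into three cases.
\begin{enumerate}[(i)]
\item For $p\ge 2$ this is exactly $\widetilde{d}\,\widetilde{s}^m=\widetilde{s}^m d$.
\item For $p=1$, the chain $d(v_0,v_1)=(v_1)-(v_0)$ has coefficient sum $0$. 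So the two copies of $E$ contributed by $\widetilde{s}^m$ in degree $0$ cancel, and again $\widetilde{d}\,\widetilde{s}^m=\widetilde{s}^m d$.
\item For $p=0$ we have $d(v_0)=0$ in $\widetilde{C}_*(V)$, which has nothing in negative degrees. On the other side, $\widetilde{d}$ of $(-1)^m\rm{Sym}_m(e_1,\ldots,e_m\vert v_0)$ is $(-1)^m\rm{Sym}_m(e_1,\ldots,e_m\vert)$. By \cref{lem:e-element}, $\widetilde{d}E=(-1)^{m-1}\rm{Sym}_m(e_1,\ldots,e_m\vert)$. The two cancel, so $\widetilde{d}\,\widetilde{s}^m(v_0)=0$. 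This cancellation is precisely why $E$ is added in degree $0$.
\end{enumerate}

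\emph{Augmentation and equivariance.} The augmentation of $\widetilde{BD}'^m$ only sees the term $(\vert e_1)$ of $E$, which comes from $k=0$ and maps to $1$. The bottom-row terms have $k=m\ge1$ and map to $0$, assuming $m\ge 1$. Hence $\widetilde{s}^m$ covers the identity of $\bb{Q}$. It is $\GL(V)$-equivariant because $\GL(V)$ fixes the $e_j$ and commutes with $\rm{Sym}_m$.

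The main point of care, though not a real obstacle, is the sign and index bookkeeping in the identity $\delta^{(m)}\circ\rm{Sym}_m=\rm{Sym}_m\circ(\text{bar differential on the }v\text{'s})$. The $p=0$ step is the conceptual heart of the proof, and it is handled entirely by \cref{lem:e-element}.
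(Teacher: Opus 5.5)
Your proof is correct and follows the paper's argument. The degree-$0$ case is the same cancellation of $\widetilde{d}$ of the bottom-row term against $\widetilde{d}(E)=(-1)^{m-1}\mathrm{Sym}_m(e_1,\ldots,e_m\vert)$; you additionally spell out the cases the paper calls "easily verified" (including the cancellation of the two copies of $E$ when $p=1$), and you correctly use $\delta^{(m)}$ on the bottom row where the paper's displayed computation has the typo $\delta^{(m-1)}$.
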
 

\begin{proof}For the case $*=0$ we have $d(v_0) = 0$ and we verify
\begin{align*}\widetilde{d}(\widetilde{s}^m(v_0)) &= \delta^{(m-1)}((-1)^m \rm{Sym}_m(e_1,\compactldots,e_m \vert v_0)) + \widetilde{d}(E) & \\
&= (-1)^m\rm{Sym}_m(e_1,\compactldots,e_m\vert) + (-1)^{m-1} \rm{Sym}_m(e_1,\compactldots,e_m\vert) = 0.\end{align*}
The cases $*>0$ are easily verified, so this is a chain map.
\end{proof}

This stabilisation map is equivariant for the standard stabilisation map $s^m \colon \GL(V) \to \GL(F^m \oplus V)$ and is compatible with the augmentations by definition. Thus Section \ref{sec:standard-material-resolutions} yields the following diagram commuting up to chain homotopy, where the left horizontal maps are \eqref{eqn:iota-v}, \eqref{eqn:iota-m-v}, and middle vertical map $s^m$ is the map induced by $\widetilde{s}^m$ on covariants:
   \[\begin{tikzcd}
             C_*(\GL_n(F)) \rar{\iota_V} \dar[swap]{C_*(s^m)} &[10 pt] C_*(n) \dar{s^m} \rar{\phi^0_n} & BC_*(n) \dar[equal]\\[-5pt] 
             C_*(\GL_{m+n}(F)) \rar{{\iota'}^m_{F^m \oplus V}} & {BD'}_*^{m}(m+n) \rar{{\phi'}^m_n}  & BC_*(n) \dar[equal] \\[-5pt]
             C_*(\GL_{m+n}(F)) \rar{\iota^m_{F^m \oplus V}} \uar[equal] & BD_*^{m}(m+n) \rar{\phi^m_n} \uar & BC_*(n).
        \end{tikzcd}\]
On the right we have added that the map of \cref{lem map to bi grassmannian complex} extends to ${BD'}_*^m(V)$, and is compatible with stabilisation by inspection. From this we obtain the following square commuting up to chain homotopy, completing the proof of Step 2:
\[\begin{tikzcd}
            C_*(\GL_n(F)) \rar{\sigma_0} \dar[swap]{C_*(s^m)} & BC_*(n) \dar[equal] \\[-5pt]
            C_*(\GL_{m+n}(F)) \rar{\sigma_{m}} & BC_*(n).
        \end{tikzcd} \]

\subsection{An explicit formula} Having completed steps 1 and 2, we now provide the explicit formula of step 3. This proves \cref{thm:extension-criterion}, once we observe that $F^0$ is as stated in step 4.

\subsubsection{The complex of partial frames} Recall a \emph{partial $k$-frame} is a sequence $v^\bullet= (v^1,\compactldots,v^k)$ of linearly independent vectors in $V$. To find an explicit chain map representing 
\[\iota_V^{k-1} \colon C_*(\GL(V)) \lra BD_*^{k-1}(V)\]
as in \eqref{eqn:iota-m-v} we introduce an intermediate complex, inspired by Goncharov's decorated complex \cite[Section 4.1]{Gon16} and constructed from partial $k$-frames in general position.

\medskip

For a finite-dimensional vector space $V$ over $F$, consider the chain complex with entries given by the free $\bb{Q}$-vector space (the superscripts $k-1$ for $k$ frames is unfortunate, but fits the rest of the notation better)
\[\widetilde{F}^{k-1}_p(V) \coloneq \bb{Q}[(v_0^\bullet,\compactldots,v_p^\bullet) \mid v_i^\bullet \text{ are partial $k$-frames in general position}]\]
on ordered $(p+1)$-tuples of partial $k$-frames in general position in $V$, and differential given by $d \colon (v^\bullet_0,\compactldots,v^\bullet_p) = \sum_{i=0}^p (-1)^i (v_0^\bullet,\compactldots,\widehat{v^\bullet_i},\compactldots,v_p^\bullet)$. The following is proven using a standard argument that uses a partial $m$-frame in general position to cone off a cycle, which exists since $F$ is infinite:

\begin{lemma} \label{lem m frames complex acyclic}
    The following is an acyclic complex of $\GL(V)$-modules 
    \[\compactldots \lra \widetilde{F}^{k-1}_2(V) \overset{d}\lra \widetilde{F}_1^{k-1}(V) \overset{d} \lra \widetilde{F}_0^{k-1}(V) \overset{\eta}\lra \bb{Q},\]
    where $\eta$ is the augmentation unique determined by $(v^\bullet) \mapsto 1$. 
\end{lemma}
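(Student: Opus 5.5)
We prove the statement in two parts. The first is that $\eta$ is a well-defined, surjective, $\GL(V)$-equivariant chain map. The second is that the complex is exact in each degree $p \ge 0$.

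\emph{The augmentation.} Since $\eta \circ d$ sends a $1$-chain $(v_0^\bullet,v_1^\bullet)$ to $1-1=0$, the map $\eta$ is a chain map. It is surjective because $\widetilde{F}^{k-1}_0(V)$ is nonempty: partial $k$-frames exist as soon as $k \le \dim V$, which is the standing situation where the complex is used. Equivariance is clear, since $\GL(V)$ preserves linear independence and general position.

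\emph{Exactness via a cone.} Let $z = \sum_j a_j (v^\bullet_{j,0},\ldots,v^\bullet_{j,p})$ be a cycle in $\widetilde{F}^{k-1}_p(V)$; in degree $0$, read "cycle" as $\eta(z)=0$. Only finitely many frames occur in $z$, so the set $S \subset V$ of all vectors appearing in them is finite.

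We choose a partial $k$-frame $w^\bullet=(w^1,\ldots,w^k)$ such that $S \cup \{w^1,\ldots,w^k\}$ is in general position. The condition is that for every subset of at most $\dim V$ vectors of this union that contains some $w^i$, the chosen vectors are linearly independent. We pick $w^1, w^2, \ldots, w^k$ one at a time. At each step the forbidden set is a finite union of proper linear subspaces, namely spans of at most $\dim V - 1$ previously chosen vectors. Because $F$ is infinite, $V$ is not a finite union of proper subspaces, so a valid choice exists; the case $\dim V=0$ is trivial. The same choice makes each tuple $(w^\bullet, v^\bullet_{j,0},\ldots,v^\bullet_{j,p})$ consist of partial $k$-frames in general position.

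Define the cone operator
\[h(v_0^\bullet,\ldots,v_p^\bullet) \coloneq (w^\bullet,v_0^\bullet,\ldots,v_p^\bullet),\]
on the span of the generators appearing in $z$. The standard identity $d h + h d = \id$ holds there in positive degrees, and in degree $0$ one has $dh = \id - \eta(-)\,(w^\bullet)$. Faces of generators in $z$ only involve vectors of $S$, so $h$ is defined on $dz$ as well. Hence $z = d(hz)$ for $p \ge 1$, and $z = d(hz) + \eta(z)(w^\bullet) = d(hz)$ for $p = 0$. This proves exactness everywhere.

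The only point requiring care is the general-position choice of $w^\bullet$. Here it matters that general position is imposed on the collection of all the vectors, not only on the frames individually, so the choice must avoid all subspaces spanned by at most $\dim V - 1$ vectors from $S$ together with the earlier $w^i$. Once that is arranged, the rest is the usual contracting-homotopy argument.
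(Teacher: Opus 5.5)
Your proposal is correct and is the argument the paper has in mind: the paper's proof is only the remark that one cones off a cycle using a partial frame in general position, which exists because $F$ is infinite. One small slip: the set $S$ of all vectors occurring in $z$ need not itself be in general position, since vectors from different generators can be dependent, so neither ``$S \cup \{w^1,\ldots,w^k\}$ in general position'' nor your reformulation about subsets containing some $w^i$ can be achieved in general; what your step-by-step choice actually guarantees, by avoiding spans of at most $\dim V - 1$ earlier vectors, is that $w^\bullet$ together with the vectors of each single generator of $z$ is in general position, and that is all the cone argument needs.
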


This has a natural action of $\GL(V)$ and we denote the chain complex of coinvariants by $F^{k-1}_*(V) \coloneq \widetilde{F}^{k-1}_*(V)_{\GL(V)}$. Observe that if $V \cong W$ then $F^{k-1}_*(V)$ is canonically isomorphic to $F^{k-1}_*(V')$, so we may as well consider $F^{k-1}_p(n) \coloneqq F^{k-1}_p(F^n)$. We call this the complex of \emph{partial $k$-frames}. By \cref{lem m frames complex acyclic} and Section \ref{sec:standard-material-resolutions} there is a chain map, well-defined up to chain homotopy,
\[j_V^{k-1} \colon C_*(\GL(V)) \lra F^{k-1}_*(V).\]

\begin{lemma} \label{lem:chain-map-from-partial-frams} For each $k \ge 1$ there is a map of augmented chain complexes
    \begin{align*}\widetilde{T}_* \colon \widetilde{F}^{k-1}_*(V) &\lra \widetilde{BD}^{k-1}_*(V) \\
    (v^\bullet_0,\compactldots,v^\bullet_p) &\longmapsto \bigoplus_{j=0}^{k-1} \sum_{\substack{
i_0+\compactcdots+i_p=j\\
i_0,\compactldots, i_p\ge 0
}}(-1)^j \rm{Sym}_j(v_0^1,\compactldots,v_0^{i_0},\compactldots,v_p^1,\compactldots,v_p^{i_p} \vert v_0^{i_0+1},\compactldots, v_p^{i_p+1}).\end{align*}
\end{lemma}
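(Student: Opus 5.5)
The plan is to verify directly the four properties that make $\widetilde{T}_*$ a map of augmented chain complexes of $\GL(V)$-modules: it is well defined, it is equivariant, it intertwines $d$ with $\widetilde{d}=\delta^{(j)}+\lambda^{(j)}$, and it is compatible with the augmentations.

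\emph{Well-definedness, equivariance and degrees.} If the frames $v_0^\bullet,\ldots,v_p^\bullet$ are in general position, then all $(p+1)k$ vectors are in general position. Every tuple appearing in $\widetilde{T}_*$ is a sub-tuple of these vectors, so it is again in general position. The vector $v_l^{i_l+1}$ exists because $i_l\le j\le k-1$. Equivariance is clear, since $\GL(V)$ acts diagonally on all vectors. For degrees: the $j$-th component $\widetilde{T}_j(x)$ of $\widetilde{T}_*(x)$ has $j$ vectors before the bar and $p+1$ after. It therefore lies in $\widetilde{D}^{k-1}_{-j,p+j}(V)$, which has total degree $p$, as required.

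\emph{Compatibility with differentials.} We must show, row by row, that
\[\widetilde{T}_j(dx)=\delta^{(j)}\widetilde{T}_j(x)+\lambda^{(j-1)}\widetilde{T}_{j-1}(x),\]
with the $\lambda$-term absent for $j=0$. Write a typical term as $\rm{Sym}_j(a\vert b_0,\ldots,b_p)$ with $b_l=v_l^{i_l+1}$. The map $\delta^{(j)}$ deletes $b_l$ with sign $(-1)^l$, and we split these terms into two groups.
\begin{enumerate}[(a)]
\item If $i_l=0$, the resulting term is exactly the term of $\widetilde{T}_j$ applied to the $l$-th face of $x$, indexed by the same $(i_0,\ldots,\widehat{i_l},\ldots,i_p)$. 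The sign is $(-1)^l(-1)^j$, which matches the sign in $\widetilde{T}_j(dx)$.
\item If $i_l\ge1$, the vectors $v_l^1,\ldots,v_l^{i_l}$ remain before the bar, but no vector of the frame $v_l^\bullet$ remains after it. These terms must cancel against $\lambda^{(j-1)}\widetilde{T}_{j-1}(x)$.
\end{enumerate}
To see the cancellation in (b), use $\rm{Sym}_j\circ\rm{Sym}_{j-1}=\rm{Sym}_j$. Then $\lambda^{(j-1)}$ applied to the term of $\widetilde{T}_{j-1}$ indexed by $(i_0,\ldots,i_p)$ gives $\sum_l(-1)^l$ times the term obtained by moving $b_l=v_l^{i_l+1}$ before the bar and symmetrising. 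This is precisely the $\delta^{(j)}$-term of type (b) indexed by $(i_0,\ldots,i_l+1,\ldots,i_p)$. The sign there is $(-1)^{j-1}(-1)^l$, against $(-1)^j(-1)^l$ for the $\delta$-term, so the two cancel. This gives a bijection between all type-(b) terms and all $\lambda^{(j-1)}$-terms.

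\emph{Top row and low degree.} The bicomplex stops at row $k-1$, and $\widetilde{T}_*$ has no component beyond row $k-1$. So we never need $\lambda^{(k-1)}$, and the identity above is only required for $j\le k-1$. For $p=0$, the $\delta^{(j)}$-terms land in degree $-1$ and are handled by the same bijection. The remaining term is $\delta^{(0)}(\vert v_0^1)=1$, which is the augmentation.

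\emph{Augmentations.} For a single frame, $\widetilde{T}_*(v^\bullet)$ has component $(\vert v^1)$ in $\widetilde{D}_{0,0}$, which augments to $1=\eta(v^\bullet)$.

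The main obstacle is the sign and index bookkeeping in the bijection of step (b). In particular, one must check that the cancellation survives the symmetrisation; it does, because $\lambda^{(j-1)}$ is only applied after $\rm{Sym}_{j-1}$ and is re-symmetrised by $\rm{Sym}_j$, whereas the identity $\lambda\circ\lambda=0$, which fails before symmetrising, is never used.
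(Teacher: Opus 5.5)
Your proof is correct and follows essentially the paper's argument. The paper likewise splits the $\delta^{(j)}$-terms of $\widetilde{d}\,\widetilde{T}(x)$ into two groups. Those with $i_t=0$ reproduce $\widetilde{T}(dx)$. Those with $i_t\ge 1$ cancel against $\lambda^{(j-1)}\widetilde{T}_{j-1}(x)$ after the reindexing $i_t\mapsto i_t+1$, using $\mathrm{Sym}_j\circ\mathrm{Sym}_{j-1}=\mathrm{Sym}_j$. The paper then handles degree $0$ by the same telescoping as for the element $E$, with no $\lambda^{(k-1)}$ since the bicomplex stops at row $k-1$, and checks the augmentation on the row-$0$ component $(\vert v_0^1)$.
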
 

\begin{proof} The proof is a lengthy computation. We need to abbreviate ellipses to a single period.

\smallskip

\noindent \emph{Step 1.} We first compute the $(-j,p-1+j)$ component of $(\widetilde{T}_* \circ \widetilde{D})(v_0^\bullet,.,v_p^\bullet)$ for $p\ge 1$. By the formulae, it is given by the following element of $\widetilde{C}_{p-1+j}(V)$
\[(-1)^j \sum_{t=0}^p (-1)^t \sum_{\substack{
i_0+\compactcdots+i_p=j\\
i_0,\compactldots, i_p\ge 0 \\
i_t=0
}} \rm{Sym}_j(v_0^1,.,v_0^{i_0},.,v_p^1,.,v_p^{i_p},v_0^{i_0+1},.,\widehat{v_t^{i_t+1}},.,v_p^{i_p+1}).\]

\smallskip

\noindent \emph{Step 2.} We next compute the $(-j,p-1+j)$-component of $(\widetilde{d} \circ \widetilde{T}_*)(v_0^\bullet,\compactldots,v_p^\bullet)$ for $p \ge 1$. This has two components. For the first, we need to apply $\delta^{(j)}$ to the $(-j,p+j)$-component of $\widetilde{T}_*(v_0^\bullet,.,v_p^\bullet)$, which is given by 
    \[\sum_{\substack{
i_0+\compactcdots+i_p=j\\
i_0,\compactldots, i_p\ge 0
}}(-1)^j \sum_{t=0}^p (-1)^t \rm{Sym}_k(v_0^1,.,v_0^{i_0},.,v_p^1,.,v_p^{i_p},v_0^{i_0+1},., \widehat{v_t^{i_t+1}}, v_p^{i_p+1}). \]
For the second, we need to apply $\lambda^{(j-1)}$ to the $(-j+1,p+j-1)$-component of $\widetilde{T}_*(v_0^\bullet,.,v_p^\bullet)$ (with the convention that this vanishes for $k=0$), which is given by 
\begin{align*}
    &\lambda^{(j-1)} \Big( \sum_{\substack{
i_0+\compactcdots+i_p=j-1\\
i_0,\compactldots, i_p\ge 0
}}(-1)^{k-1} \rm{Sym}_{j-1}(v_0^1,.,v_0^{i_0},.,v_p^1,.,v_p^{i_p},v_0^{i_0+1},., v_p^{i_p+1})\Big) =\\ 
&
 \sum_{\substack{
i_0+\compactcdots+i_p=j-1\\
i_0,\compactldots, i_p\ge 0
}} (-1)^{k-1} \sum_{t=0}^p (-1)^t \rm{Sym}_j(v_0^1,.,v_0^{i_0},.,v_t^1,.,v_t^{i_t+1},.,v_p^1,.,v_p^{i_p},v_0^{i_0+1},.,\widehat{v_t^{i_t+1}},.,v_p^{i_p+1}),
\end{align*}
using that $\rm{Sym}_j \circ \rm{Sym}_{j-1}=\rm{Sym}_j$ in the last equality. 
By renaming $i_t+1$ to $i_t$, with condition that $i_t \ge 1$, we can now write the last expression as 
\[-\sum_{\substack{
i_0+\compactcdots+i_p=h\\
i_0,\compactldots, i_p\ge 0 \\
i_t \ge 1
}}(-1)^k \sum_{t=0}^p (-1)^t \rm{Sym}_j(v_0^1,\compactldots,v_0^{i_0},\compactldots,v_p^1,\compactldots,v_p^{i_p},v_0^{i_0+1},\compactldots, \widehat{v_t^{i_t+1}}, v_p^{i_p+1}). \]
Adding these two cases, the part where $i_t \ge 1$ cancels out and we obtain the following element of $\widetilde{C}_{p-1+j}(V)$: 
\[(-1)^j \sum_{t=0}^p (-1)^t \sum_{\substack{
i_0+\compactcdots+i_p=j\\
i_0,\compactldots, i_p\ge 0 \\
i_t=0
}} \rm{Sym}_j(v_0^1,.,v_0^{i_0},.,v_p^1,.,v_p^{i_p},v_0^{i_0+1},.,\widehat{v_t^{i_t+1}},.,v_p^{i_p+1}).\]

\smallskip

\noindent \emph{Step 3.} We next prove that $\widetilde{T}_*$ is a chain map.  Since $\widetilde{F}^{k-1}_*(V)=0$ for $* <0$ it suffices to verify $\widetilde{d} \circ \widetilde{T}_* = \widetilde{T}_* \circ \widetilde{D}$ on $\widetilde{F}^{k-1}_{* \ge 1}(V)$ and that $\widetilde{d} \circ \widetilde{T}$ vanishes on $\widetilde{F}^{k-1}_0(V)$.   The first case follows by direct comparison of Steps 1 and 2. The second case follows from the fact that for $(v_0^\bullet) \in \widetilde{F}^{k-1}_0(V)$ we have 
\[\widetilde{T}(v_0^\bullet)= \bigoplus_{j=0}^{k-1} (-1)^j  \rm{Sym}_j(v_0^1,.,v_0^{j+1}) \in \bigoplus_{k=0}^{k-1}\widetilde{D}^{k-1}_{-j,j}(V)=\widetilde{BD}^{k-1}_0(V),\]
and this is a cycle for $\widetilde{d}$ by the computation of \cref{lem:e-element}.

\smallskip

\noindent \emph{Step 4.} We finally verify compatibility with augmentations: we need to show that for $(v_0^\bullet) \in \widetilde{F}^{k-1}_0(V)$ we have $(\epsilon \circ \widetilde{T}_*)(v_0)=1$. As before, for $(v_0^\bullet) \in \widetilde{F}^{k-1}_0(V)$ we have 
\[\widetilde{T}(v_0^\bullet)= \bigoplus_{j=0}^{k-1} (-1)^j  \rm{Sym}_j(v_0^1,\compactldots,v_0^{j+1}) \in \bigoplus_{j=0}^{k-1}\widetilde{D}^{k-1}_{-j,j}(V)=\widetilde{BD}^{k-1}_0(V),\]
and hence $\epsilon(\widetilde{T}_*(v_0^\bullet))=1$. 
\end{proof}

Taking $\GL(V)$-coinvariants we obtain a map $T_* \colon F^{k-1}_*(n) \to BD^{k-1}_*(n)$ that by Section \ref{sec:standard-material-resolutions} has the property that the following is homotopic to $\iota_V^{k-1}$
\[C_*(\GL(V)) \xrightarrow{j_V^{k-1}} F^{k-1}_*(V) \xrightarrow{T_*} BD^{k-1}_*(V).\]

\begin{example}
    A partial $1$-frame is the same as a non-zero vector, $\widetilde{F}^0_*(V)=\widetilde{C}_*(V)$, and $F^0_*(n) = C_*(n)$. The map $T_* \colon F^0_*(V) \to BD^0_*(V)$ is the identity by identifying both the source and target canonically with $C_*(V)$. 
\end{example}

\subsubsection{An explicit homotopy inverse} To finish the proof of \cref{thm:extension-criterion}, it remains to produce an explicit formula for a chain map
\[j_V^{k-1} \colon C_*(\GL(V)) \lra F^{k-1}_*(V).\]
Fix the data of a partial $k$-frame $v^\bullet$ in $V$ once and for all. We then say that a tuple $(g_1,\compactldots,g_p) \in \GL(V)^p$ is $v^\bullet$-\emph{generic} if $(v^\bullet, g_1  \cdot v^\bullet, \compactldots, (g_1 \compactcdots g_p) \cdot v^\bullet)$ is a $(p+1)$-tuple of partial $k$-frames in general position. Note that if $(g_1,\compactldots,g_p)$ is $v^\bullet$-generic then $(g_1  \cdot v^\bullet, \compactldots, (g_1 \compactcdots g_p) \cdot v^\bullet)$ is a $p$-tuple of partial $k$-frames in general position, and hence so are $(v^\bullet, g_2  \cdot v^\bullet, \compactldots, (g_2 \compactcdots g_p) \cdot v^\bullet)$. Thus, $(g_2,\compactldots,g_p) \in \GL(V)^p$ is also $v^\bullet$-generic. 

\begin{proposition} \label{prop partial map from group chains to partial frames}
The chain map $j_V^{k-1} \colon C_*(\GL(V)) \to F^{k-1}_*(V)$ can be chosen to be given by 
\[[g_1|\compactcdots|g_p] \longmapsto (\!(v^\bullet,g_1 \cdot v^\bullet, \compactldots, (g_1\compactcdots g_p) \cdot v^\bullet)\!), \]
on $v^\bullet$-generic generators $(g_1,\compactldots,g_p) \in C_p'(\GL(V))$. 
\end{proposition}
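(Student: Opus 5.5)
The plan is to construct an explicit $\GL(V)$-equivariant chain map $B_*(\bb{Q}[G],\bb{Q}[G],\bb{Q}) \to \widetilde{F}^{k-1}_*(V)$ covering the identity on $\bb{Q}$, defined on the generic part by the orbit formula and extended to the non-generic part by some (non-explicit) choice. Uniqueness up to homotopy from Section \ref{sec:standard-material-resolutions} then shows it represents $j_V^{k-1}$. Here $G = \GL(V)$. In homogeneous notation, the bar resolution has basis $(h_0,\ldots,h_p)$ with diagonal $G$-action, and the inhomogeneous generator $[g_1|\compactcdots|g_p]$ corresponds to $(1,g_1,g_1g_2,\ldots,g_1\compactcdots g_p)$.

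First I define a subcomplex $B'_* \subseteq B_*$ spanned by those $(h_0,\ldots,h_p)$ for which $(h_0 v^\bullet,\ldots,h_p v^\bullet)$ is in general position. It is $G$-stable and closed under faces, since subtuples of tuples in general position are in general position. This is the homogeneous version of the observation preceding the proposition. On $B'_*$ I set $\Phi(h_0,\ldots,h_p) = (h_0 v^\bullet,\ldots,h_p v^\bullet)$. This map is evidently $G$-equivariant, commutes with the face differentials, and is compatible with the augmentations.

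Next I extend $\Phi$ to all of $B_*$ inductively on degree. In degree $p$, pick a $G$-orbit representative of each basis element not in $B'_p$; such elements are free over $\bb{Q}[G]$, since the bar resolution has free $G$-orbits. For each representative $x$, the element $\Phi(dx)$ is already defined and is a cycle (respectively has augmentation compatible with $\epsilon(x)$ when $p=0$). By acyclicity of $\widetilde{F}^{k-1}_*(V)$, \cref{lem m frames complex acyclic}, it is a boundary $dy$, and I set $\Phi(x)=y$ and extend equivariantly. One can even take $y$ to be a cone over a partial $k$-frame in general position with everything appearing in $\Phi(dx)$; this exists since $F$ is infinite. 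Freeness of the complement of $B'_*$ makes the extension well-defined, because the complement is spanned by free orbits and so no compatibility with the values already fixed on $B'_*$ arises.

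The resulting $\Phi$ is an equivariant augmented chain map out of the bar resolution, so it agrees up to chain homotopy with any other such map. Taking coinvariants therefore gives a valid choice of $j_V^{k-1}$, and on $v^\bullet$-generic inhomogeneous generators it is given by the stated formula. The only real care needed is the bookkeeping that the non-generic basis elements form a $G$-stable set of free orbits complementary to $B'_*$, so that the inductive extension never conflicts with the prescribed values. I expect this to be straightforward.
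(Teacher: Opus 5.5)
Your proposal is correct and follows the paper's proof. Both build a $\GL(V)$-equivariant augmented chain map out of the levelwise-free bar resolution by prescribing the orbit formula on the generic generators, which are closed under taking faces. Both then extend arbitrarily over the remaining free generators using acyclicity of $\widetilde{F}^{k-1}_*(V)$. Your homogeneous bookkeeping only makes face-closure and equivariance automatic, for instance for the term $g_1\cdot[g_2|\cdots|g_{p+1}]$, which the paper checks separately via the remark preceding the proposition.
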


\begin{proof}
By Section \ref{sec:standard-material-resolutions}, this map is obtained choosing a chain map of $\GL(V)$-modules
\[\widetilde{j}_V^{k-1} \colon B_*(\bb{Q}[\GL(V)],\bb{Q}[\GL(V)],\bb{Q}) \to \widetilde{F}^{k-1}_*(V)\]
covering the identity on $\bb{Q}$, and taking strict coinvariants. The proof of the first step uses that the domain is levelwise free given in degree $p$ by $\free_{\bb{Q}[\GL(V)]}(\bb{Q}[\GL(V)^p])$, so such a chain map can be produced inductively. For the initial case, in degree $0$ we produce a map 
\[(\widetilde{j}_V^{k-1})_0 \colon \free_{\bb{Q}[\GL(V)]}(\bb{Q}\{()\}) \to \widetilde{F}^{k-1}_0(V)\]
compatible with augmentations by setting it to be $() \longmapsto (v^\bullet)$.

For the induction step, suppose that $\widetilde{j_V}^{k-1}$ has been defined in degrees $\le p$ for some $p \ge 0$, so that if $[g_1|\compactcdots|g_p]$ is $v^\bullet$-generic then it is sent to $(v^\bullet, g_1 \cdot 
v^\bullet,\compactldots,(g_1\compactcdots g_p) \cdot v^\bullet)$. We need to show that we can produce $(\widetilde{j}_V^{k-1})_{p+1}$ with the correct formula on $v^\bullet$-generic tuples. Since its domain is a free $\GL(V)$-module on $(p+1)$-tuples of elements, it suffices to define it on generators. When $[g_1|\compactcdots|g_{p+1}]$ is $v^\bullet$-generic so are all the terms in the expression for its differential, and thus it is immediate to verify that 
\[(\widetilde{j}_V^{k-1})_{p+1}([g_1|\compactcdots|g_{p+1}])=(v^\bullet, g_1 \cdot 
v^\bullet,\compactldots,(g_1\compactcdots g_{p+1}) \cdot v^\bullet)\]
is a valid extension. On generators that are not $v^\bullet$-generic we choose an arbitrary extension, which is guaranteed to exist by freeness of the domain.  
\end{proof}

We conclude by explicitly recording the construction that yields \cref{thm:extension-criterion}:

\begin{corollary} \label{cor:formulas-chain-to-bigrassmannian}
    Let $m \ge 0$, and fix a partial $(m+1)$-frame $v^\bullet$ in $F^{m+n}$, then the maps 
    \[F_m \colon C_*(\GL_{m+n}(F)) \lra BC_*(n)\]
    can be chosen to be given on a $v^\bullet$-generic element $[g_1|\compactcdots|g_p]$ by the element 
    \[\bigoplus_{j=0}^{m} \sum_{\substack{
i_0+\compactcdots+i_p=j\\
i_0,\compactldots, i_p\ge 0
}}(-1)^j (\!( \langle v_0^1,\compactldots,v_0^{i_0},\compactldots,v_p^1,\compactldots,v_p^{i_p} \rangle| v_0^{i_0+1},\compactldots, v_p^{i_p+1})\!)\]
of $\bigoplus_{j=0}^m {C}_{p}(m+n-j) = BC_p(n)$. Here $v_0^\bullet=v^\bullet$ and $v_i^\bullet \coloneq (g_1\compactcdots g_i) \cdot v^\bullet$ for $1 \le i \le p$, and where we interpret a configuration as $0$ if $j<m+n-p$.  In particular, for $p \ge n$, its $C_p(n)$-component is given by 
\[(-1)^{m}\sum_{\substack{
i_0+\compactcdots+i_p=m\\
i_0,\compactldots, i_p\ge 0
}} (\!( \langle v_0^1,\compactldots,v_0^{i_0},\compactldots,v_p^1,\compactldots,v_p^{i_p} \rangle| v_0^{i_0+1},\compactldots, v_p^{i_p+1})\!) \in {C}_{p}(n) \subset BC_p(n).\]
\end{corollary}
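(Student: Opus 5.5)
The plan is to compose the two explicit chain maps already produced and read off each graded component. By the discussion after \cref{lem:chain-map-from-partial-frams}, the map $\iota^m_{F^{m+n}} \colon C_*(\GL_{m+n}(F)) \to BD^m_*(m+n)$ is homotopic to $T_* \circ j^m_{F^{m+n}}$. Hence, up to chain homotopy, $F_m = \sigma_m = \phi^m_n \circ \iota^m_{F^{m+n}}$ may be chosen to equal $\phi^m_n \circ T_* \circ j^m_{F^{m+n}}$. Since every step is only defined up to chain homotopy, it suffices to evaluate this specific composite on $v^\bullet$-generic generators.

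I would carry out three evaluations in order.

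\begin{enumerate}
\item By \cref{prop partial map from group chains to partial frames}, with the partial $(m+1)$-frame $v^\bullet$ (so $k=m+1$), a $v^\bullet$-generic $[g_1|\compactcdots|g_p]$ is sent to $(\!(v_0^\bullet,\compactldots,v_p^\bullet)\!)$ with $v_i^\bullet=(g_1\compactcdots g_i)\cdot v^\bullet$.
\item Applying $T_*$ via the formula of \cref{lem:chain-map-from-partial-frams} gives
\[\bigoplus_{j=0}^m \sum_{i_0+\compactcdots+i_p=j}(-1)^j\,\rm{Sym}_j(v_0^1,\compactldots,v_p^{i_p}\,\vert\, v_0^{i_0+1},\compactldots,v_p^{i_p+1}),\]
with the $j$-th summand lying in $D^m_{-j,p+j}$, i.e.\ in $\widetilde{C}_{p+j}$ on the $j$th row.
\item Apply $\phi^m_n$ from \cref{lem map to bi grassmannian complex}. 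This replaces the vectors to the left of the bar by their span. Because the span is invariant under permuting those vectors, the symmetrisation $\rm{Sym}_j$ disappears: it is an average of $j!$ terms with the same image.
\end{enumerate}

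The result in row $j$ is $(\!(\langle v_0^1,\compactldots,v_p^{i_p}\rangle \mid v_0^{i_0+1},\compactldots,v_p^{i_p+1})\!)$. This is a configuration of $p+1$ vectors in $F^{m+n}/V_{i_0,\ldots,i_p}$, which has dimension $m+n-j$, so it lies in $C_p(m+n-j)$, and these summands together form $BC_p(n)$. The vanishing convention, namely that $\phi^m_n$ kills generators with $p+j < n+m$, is exactly the stated convention that the configuration is interpreted as $0$ when $j<m+n-p$. Setting $j=m$ isolates the $C_p(n)$ component and yields the displayed formula with sign $(-1)^m$.

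The step needing care is the bookkeeping of indices between the bicomplex grading of $D^m$ and that of $C_{*,*}(n)$. One must check three things: that the $j$-th term of $T_*$ really sits in row $j$ with $p+j+1$ entries; that the quotient vectors are still in general position, which follows from genericity of the $(p+1)$-tuple of frames; and that the vanishing condition in $\phi^m_n$ translates correctly. I would also confirm that the frame used in $j^m$ matches the fixed $v^\bullet$, so that $v_0^\bullet=v^\bullet$.

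Finally, \cref{thm:extension-criterion}(ii) follows by postcomposing with the chain map $BC_*(n)\to\ds{k}[p]$ induced by the bi-Grassmannian cocycle $\sf{h}$. This map only sees the $C_p(n)$ component.
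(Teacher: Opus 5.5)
Your proposal is correct and takes essentially the same route as the paper. You replace $\iota^m_{F^{m+n}}$ by the homotopic composite $T_*\circ j^m_{F^{m+n}}$, evaluate on $v^\bullet$-generic tuples using the explicit formulas for $j^m$ and $T_*$, and apply $\phi^m_n$, where the symmetrisations drop out because spans are permutation-invariant. Your extra bookkeeping is accurate and simply makes explicit what the paper's short proof leaves implicit: row $j$ lands in $C_p(m+n-j)$, the vanishing convention $j<m+n-p$ matches, and general position survives in the quotient.
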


\begin{proof}
By construction, the maps $C_*(\GL_{m+n}(F)) \to BC_*(n)$ are given by the composition 
\[C_*(\GL_{m+n}(F)) \xrightarrow{j_{m+n}^{m+1}} F^m_*(m+n) \xrightarrow{T_*} BD^{m}_*(m+n) \xrightarrow{\phi_n^m} BC_*(n).\]
The formula thus follows by using \cref{prop partial map from group chains to partial frames} (this step forces us to restrict to generic tuples) and \cref{lem map to bi grassmannian complex}. Observe that the symmetrisations in the formula for $T_*$ does not play a role since the spans satisfy $\langle v_1,\compactldots,v_k \rangle= \langle v_{\sigma(1)},\compactldots,v_{\sigma(k)}\rangle$ for any permutation $\sigma \in \fr{S}_k$.
\end{proof}

\bibliographystyle{amsalpha}
\bibliography{./refs}

\bigskip

\end{document}

%% file: preamble.tex
\usepackage{fix-cm}
\usepackage[final]{microtype}
\usepackage[dvipsnames,svgnames,x11names,hyperref]{xcolor}
\usepackage{dsfont,url,graphicx,verbatim,amssymb,enumerate,stmaryrd,booktabs,lmodern,mathtools,mathabx,nicefrac}
\SetSymbolFont{stmry}{bold}{U}{stmry}{m}{n}
\usepackage[pagebackref,colorlinks,citecolor=Mahogany,linkcolor=Mahogany,urlcolor=Mahogany,filecolor=Mahogany]{hyperref}
\usepackage[capitalize]{cleveref}
\usepackage[mathscr]{euscript}
\usepackage[margin=1.33in]{geometry}
\usepackage{tikz,tikz-cd}
\usetikzlibrary{matrix,calc,positioning,arrows,decorations.pathreplacing,patterns,arrows,patterns.meta}
\tikzset{
  snake left/.style={
    rounded corners,
    to path={
      let \p1 = (\tikztostart.east),
          \p2 = (\tikztotarget.west),
          \p3 = ($(\p1)!0.5!(\p2)$),
          \n1 = {8pt} 
      in
      (\p1)
      -- (\x1 + \n1, \y1)
      -- (\x1 + \n1, \y3)
      -- (\x2 - \n1, \y3) \tikztonodes
      -- (\x2 - \n1, \y2)
      -- (\p2)
    }
  }
}

\newtheorem{theorem}{Theorem}[section]
\newtheorem*{theorem*}{Theorem}
\newtheorem{lemma}[theorem]{Lemma}
\newtheorem{proposition}[theorem]{Proposition}
\newtheorem{corollary}[theorem]{Corollary}
\newtheorem*{corollary*}{Corollary}

\newtheorem{atheorem}{Theorem}

\newtheorem{innercustomgeneric}{\customgenericname}
\providecommand{\customgenericname}{}
\newcommand{\newcustomtheorem}[2]{%
  \newenvironment{#1}[1]
  {%
   \ifdefined\crefalias\crefalias{innercustomgeneric}{#2}\fi
   \renewcommand\customgenericname{#2}%
   \renewcommand\theinnercustomgeneric{##1}%
   \innercustomgeneric
  }
  {\endinnercustomgeneric}%
  \ifdefined\crefname\crefname{#2}{#2}{#2s}\fi
}

\newcustomtheorem{customthm}{Theorem}

\newcustomtheorem{customconj}{Conjecture}
\theoremstyle{definition}
\newtheorem{definition}[theorem]{Definition}
\newtheorem*{definition*}{Definition}

\newtheorem{notation}[theorem]{Notation}

\theoremstyle{remark}
\newtheorem{example}[theorem]{Example}
\newtheorem*{example*}{Example}
\newtheorem*{remark*}{Remark}

\newtheorem{remark}[theorem]{Remark}

\renewcommand{\sf}[1]{{\mathsf{#1}}}

\renewcommand{\rm}[1]{{\mathrm{#1}}}
\newcommand{\lra}{\longrightarrow}

\newcommand{\ol}[1]{{\overline{#1}}}

\newcommand{\ds}[1]{{\mathds{#1}}}
\newcommand{\fr}[1]{{\mathfrak{#1}}}
\newcommand{\bb}[1]{{\mathds{#1}}}

\makeatletter
\newcommand{\superimpose}[2]{{%
  \ooalign{%
    \hfil$\m@th#1\@firstoftwo#2$\hfil\cr
    \hfil$\m@th#1\@secondoftwo#2$\hfil\cr
  }%
}}
\makeatother

\newcommand{\GL}{\rm{GL}}

\newcommand{\C}{\ds{C}}
\newcommand{\R}{\ds{R}}

\newcommand{\prim}{\rm{prim}}

\newcommand{\free}{\rm{free}}

\newcommand{\gr}{\rm{gr}}

\newcommand{\SL}{\rm{SL}}

\newcommand{\id}{\rm{id}}
\newcommand{\inc}{\rm{inc}}

\newcommand{\compactldots}{\mathinner{\ldotp\mkern-2mu\ldotp\mkern-2mu\ldotp}}
\newcommand{\compactcdots}{\mathinner{\cdotp\mkern-2mu\cdotp\mkern-2mu\cdotp}}

\DeclareMathSymbol{\shortminus}{\mathbin}{AMSa}{"39}

\newcommand{\circled}[1]{\raisebox{.5pt}{\textcircled{\raisebox{-.9pt} {#1}}}}

\DeclareFontFamily{U}{min}{}
\DeclareFontShape{U}{min}{m}{n}{<-> udmj30}{}

\AtBeginDocument{%
	\def\MR#1{}
}